\newtheorem{theorem}{Theorem}[section]
\newtheorem{problem}[theorem]{Problem}
\newtheorem{corollary}[theorem]{Corollary}
\newtheorem{lemma}[theorem]{Lemma}
\newtheorem{proposition}[theorem]{Proposition}
\newenvironment{proof}[1][Proof]{\noindent\textbf{#1.} }{\ \rule{0.5em}{0.5em}}
\begin{document}
\title{Towards the classification of odd dimensional  homogeneous  reversible Finsler spaces with positive flag curvature \thanks{Supported by NSFC (no. 11271216, 11271198, 11221091), State Scholarship Fund of CSC (no. 201408120020), SRFDP of China, Science and Technology Development Fund for Universities and Colleges in Tianjin (no. 20141005), and Doctor fund of Tianjin Normal University (no. 52XB1305) }}
\author{Ming Xu$^1$, Shaoqiang Deng$^2$\thanks{Corresponding author} \\
\\
$^1$College of Mathematics\\
Tianjin Normal University\\
 Tianjin 300387, P. R. China\\
 Email: mgmgmgxu@163.com.\\
 \\
$^2$School of Mathematical Sciences and LPMC\\
Nankai University\\
Tianjin 300071, P. R. China\\
E-mail: dengsq@nankai.edu.cn}

\date{}


\maketitle

\begin{abstract}
In this paper, we use the flag curvature formula for homogeneous Finsler spaces in our previous work to classify odd dimensional smooth coset spaces admitting positively curved reversible homogeneous Finsler metrics.
We will show that the most features of L. B\'{e}rard-Bergery's classification results for odd dimensional positively curved Riemannian homogeneous spaces can be generalized to reversible Finsler spaces.

\textbf{Mathematics Subject Classification (2000)}: 22E46, 53C30.

\textbf{Key words}: Finsler spaces; flag curvature; Lie groups; coset spaces.

\end{abstract}


\section{Introduction}

Finding new examples of compact manifolds admitting Riemannian metrics of positive sectional curvature is one of the central problems in Riemannian geometry. In the  homogeneous setting, the problem is to   classify   positively curved Riemannian homogeneous spaces, and this has been achieved in several classical works in this field; See \cite{Ber61}, \cite{Wallach1972}, \cite{AW75} and \cite{BB76}.
Notice that in \cite{Ber61}, M. Berger missed one in his classification of positively curved normal homogeneous spaces, as pointed out by B. Wilking in \cite{Wi1999}. In the classification of odd dimensional positively curved Riemannian homogeneous spaces by L. B\'{e}rard Bergery \cite{BB76}, a gap was recently found  by the first-named author of this paper and J.A. Wolf, and it has been   corrected by B. Wilking;  See \cite{XW2015}.  Based on some methods developed in \cite{Wi2006}, B. Wilking and W. Ziller provided an alternative and modern proof of the classification  in \cite{BB76} in  their  recent preprint \cite{WZ2015}.

In homogeneous Finsler geometry,  the following  problem  is  of great  significance:

\begin{problem}\label{main-problem}
Classify the smooth coset spaces $G/H$ admitting a $G$-invariant Finsler metric of positive flag curvature.
\end{problem}

For simplicity, we will call a homogeneous space {\it positively curved}
when it admits an invariant Finsler metric of positive flag curvature, or if  it has been endowed with such a metric. By the Bonnet-Myers Theorem for Finsler spaces, a positively curved homogeneous space must be compact.

Problem \ref{main-problem} was first studied by S. Deng and Z. Hu in \cite{HD11}, where they classified homogeneous Randers metrics with positive flag curvature and vanishing S-curvature. Note that their classification is also valid for homogeneous $(\alpha,\beta)$-spaces with positive flag curvature and vanishing S-curvature \cite{XD2015}.

Recently  big progress has been made on the classification with more generality.
In \cite{XD2014},  the authors of this paper  classified positively curved
normal homogeneous Finsler spaces, generalizing the classical results of \cite{Ber61}. In the joint work of the authors with L. Huang and Z. Hu \cite{XDHH2014}, we classified even dimensional positively curved homogeneous Finsler spaces, generalizing the results of \cite{Wallach1972}.

It should be noted that a very useful flag curvature formula for homogeneous Finsler spaces has been established in \cite{XDHH2014} (see Theorem \ref{flag-curvature-formula-thm} below). In this paper, we will apply this formula to  the classification of odd dimensional positive curved homogeneous Finsler spaces.

The general theme for the classification has been set up in \cite{XD2014}. Recall that  for a positively curved homogeneous Finsler space $(G/H,F)$ with a bi-invariant orthogonal decomposition $\mathfrak{g}=\mathfrak{h}+\mathfrak{m}$ for the compact Lie group $\mathfrak{g}$, and a fundamental Cartan subalgebra $\mathfrak{t}$ of $\mathfrak{g}$ (i.e., $\mathfrak{t}\cap\mathfrak{h}$ is a
Cartan subalgebra of $\mathfrak{h}$), we divide our discussion into the following three cases:
\begin{description}
\item{\bf Case I.} Each root plane of $\mathfrak{h}$ is  a root plane of $\mathfrak{g}$.
\item{\bf Case II.} There exist two roots $\alpha$ and $\beta$  of $\mathfrak{g}$ from different simple factors, such that $\mathrm{pr}_\mathfrak{h}(\alpha)=\mathrm{pr}_\mathfrak{h}(\beta)=\alpha'$
    is a root of $\mathfrak{h}$.
\item{\bf Case III.} There exists a linearly independent pair of roots $\alpha$ and $\beta$ of $\mathfrak{g}$ from the same simple factor, such that $\mathrm{pr}_\mathfrak{h}(\alpha)=\mathrm{pr}_\mathfrak{h}(\beta)=\alpha'$
    is a root of $\mathfrak{h}$.
\end{description}

The classification is only up to local isometry. So we introduce the definition of equivalence (see Subsection 2.5) for coset spaces to specify some typical procedures which results local isometries, such as changing $G$ to its covering group, changing $H$ while fixing its identity component, cancelling common product factors from $G$ and $H$, replacing $H$ with $\sigma(H)$, where $\sigma$ is  an isomorphism of $G$, and so on. This method greatly reduces the complexity of the statement and the proofs of the classification.

In this paper we shall consider the classification of odd dimensional reversible homogeneous Finsler spaces with positive flag curvature. Our motivation to consider reversible metrics is
twofold. On one hand,  in our practical use of the flag curvature formula in Theorem \ref{flag-curvature-formula-thm}, we have found that the discussion will be much simpler under the assumption that the metric is reversible. On the other hand, restriction to reversible Finsler metrics will not lose much  generality, and  contains all the Riemannian ones. In particular, with some more detailed discussion according to the remark in Subsection 6.3, the classification result in this paper can cover that in the Riemannian case given by L. B\'{e}rard-Bergery.

The main results of this paper are   Theorems \ref{mainthm-part-1},  \ref{mainthm-part-2} and  \ref{mainthm-part-3}, and  these theorems can be summarized as the following

\begin{theorem} \label{mainthm}
Let $(G/H,F)$ be an odd dimensional positively curved reversible homogeneous Finsler space. Then we have the following:
\begin{description}
\item{\rm (1)} If it belongs to Case I,  then up to equivalence, either $G$ is a compact simple Lie group or $G/H$ is one of the homogeneous spheres $S^{2n-1}=\mathrm{U}(n)/\mathrm{U}(n-1)$ and
    $S^{4n-1}=\mathrm{Sp}(n)\mathrm{U}(1)/\mathrm{Sp}(n-1)\mathrm{U}(1)$, $n>1$, or
    the $\mathrm{U}(3)$-homogeneous Aloff-Wallach's spaces.
\item{\rm (2)} If it belongs to Case II, then up to equivalence, $G/H$ is one of the homogeneous spheres $S^3=\mathrm{SO}(4)/\mathrm{SO}(3)$ and
    $S^{4n-1}=\mathrm{Sp}(n)\mathrm{Sp}(1)/\mathrm{Sp}(n-1)\mathrm{Sp}(1), n>1$,
    or Wilking's space $\mathrm{SU}(3)\times\mathrm{SO}(3)/\mathrm{U}(2)$.
\item{\rm (3)} If it belongs to Case III, then up to equivalence, $G/H$ is one of the homogeneous spheres
    $S^{2n-1}=\mathrm{SO}(2n)/\mathrm{SO}(2n-1), n>2$, $S^7=\mathrm{Spin}(7)/\mathrm{G}_2$, and
    $S^{15}=\mathrm{Spin}(9)/\mathrm{Spin}(7)$,
    or one of the Berger's spaces
    $\mathrm{SU}(5)/\mathrm{Sp}(2)\mathrm{U}(1)$ and  $\mathrm{Sp}(2)/\mathrm{SU}(2)$.
\end{description}
\end{theorem}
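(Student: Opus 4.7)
The plan is to split the theorem into its three parts according to the trichotomy (Case I, II, III) already laid out in the introduction, each part being a separate sub-theorem. In every case the uniform technical tool is the flag curvature formula from Theorem \ref{flag-curvature-formula-thm}, applied to carefully chosen flags involving root vectors of $\mathfrak{g}$. Reversibility of $F$ is crucial: it forces the $S$-curvature type terms in that formula to vanish, so that the positivity condition collapses to a quadratic inequality that mimics the Riemannian sectional curvature condition. This is what makes it plausible that the final list matches B\'{e}rard-Bergery's Riemannian one, and sets the strategy of using his (corrected) classification as a target while verifying that no genuinely Finslerian examples slip in.

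For Case I, regularity of $\mathfrak{h}$ in $\mathfrak{g}$ allows the root data of $\mathfrak{h}$ to be read off from that of $\mathfrak{g}$. The first step is a reduction: if $\mathfrak{g}$ splits non-trivially, I plug a unit vector $u\in\mathfrak{g}_1\cap\mathfrak{m}$ and a test direction $v\in\mathfrak{g}_2\cap\mathfrak{m}$ into the flag curvature formula; positivity forces either one factor to act trivially on $\mathfrak{m}$ (absorbed by the equivalence of Subsection 2.5) or a very rigid coupling that, after enumerating rank and type, only the spheres $\mathrm{U}(n)/\mathrm{U}(n-1)$, $\mathrm{Sp}(n)\mathrm{U}(1)/\mathrm{Sp}(n-1)\mathrm{U}(1)$ and the $\mathrm{U}(3)$-Aloff-Wallach spaces realize; the remaining possibility $\mathfrak{g}$ simple is then absorbed into the statement. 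For Case II, two roots $\alpha,\beta$ from different simple factors of $\mathfrak{g}$ share a projection $\alpha'$ onto $\mathfrak{t}\cap\mathfrak{h}$, tightly coupling the two factors. Evaluating the flag curvature formula at a flag built from linear combinations of $E_\alpha$ and $E_\beta$ and using reversibility produces a Cauchy-Schwarz-type inequality whose equality case is exactly $\mathrm{SO}(4)/\mathrm{SO}(3)$, $\mathrm{Sp}(n)\mathrm{Sp}(1)/\mathrm{Sp}(n-1)\mathrm{Sp}(1)$ and Wilking's space $\mathrm{SU}(3)\times\mathrm{SO}(3)/\mathrm{U}(2)$. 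For Case III, $\alpha$ and $\beta$ come from the same simple factor, their difference lies in an abelian complement of $\mathfrak{h}$ inside its centralizer, and the curvature test at the pair $(E_\alpha+E_\beta, E_\alpha-E_\beta)$ combined with the internal bracket relations severely restricts the root geometry, leaving exactly the odd-dimensional isotropy-irreducible spheres $\mathrm{SO}(2n)/\mathrm{SO}(2n-1)$, $\mathrm{Spin}(7)/\mathrm{G}_2$, $\mathrm{Spin}(9)/\mathrm{Spin}(7)$ and the Berger spaces $\mathrm{SU}(5)/\mathrm{Sp}(2)\mathrm{U}(1)$, $\mathrm{Sp}(2)/\mathrm{SU}(2)$.

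The main obstacle is Case III. Several candidates survive the first curvature test and must be eliminated by a finer, second-order analysis in which the flag direction is varied inside the root planes to extract an auxiliary quadratic form whose positivity fails for the spurious ones. This is precisely where the gap in B\'{e}rard-Bergery's original argument was located and corrected by Wilking (see \cite{XW2015} and \cite{WZ2015}), so the Finslerian proof must be organized to reproduce that correction rather than to inherit the gap. A secondary, pervasive difficulty is book-keeping: the equivalences of Subsection 2.5 (covering maps, identity-component swaps, cancellation of common factors, outer automorphisms of $G$) must be applied in every case to keep the presentations $G/H$ canonical, while being careful never to identify genuinely distinct examples. Once Case III is handled, Cases I and II are essentially algebraic and follow from the flag curvature formula with comparatively mechanical root-system casework.
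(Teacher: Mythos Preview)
Your proposal has a structural misunderstanding that would derail the argument. You locate the main obstacle, and the B\'{e}rard-Bergery gap corrected by Wilking, in Case III. In fact the paper disposes of Case III completely by exhaustive case-by-case analysis over all simple types $A_n$ through $G_2$, and the problematic example $\mathrm{Sp}(2)/\mathrm{U}(1)$ (the source of the gap in \cite{BB76}, see \cite{XW2015}) lives in Case I with $G$ simple. The statement of part (1) is deliberately incomplete: it says \emph{either $G$ is compact simple or} $G/H$ is on the short list. The simple subcase is left open precisely because the method of the paper---finding a commuting pair $u,v\in\mathfrak{m}$ with $K^F(o,u,u\wedge v)=0$---can fail there, as the introduction and Subsection 6.3 explicitly acknowledge. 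Your plan to ``reproduce that correction'' inside Case III is aimed at the wrong target, and your Case I plan omits the crucial caveat that nothing is claimed when $\mathfrak g$ is simple.

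Methodologically, the single curvature tests you propose---evaluating at $(E_\alpha+E_\beta,E_\alpha-E_\beta)$ for Case III, or extracting a ``Cauchy--Schwarz-type inequality'' for Case II---are not how the argument runs and would not suffice. The engine consists of two key lemmas (Lemmas \ref{key-lemma-1} and \ref{key-lemma-2}) that identify root configurations forcing $U(u,v)=0$ and hence $K^F(o,u,u\wedge v)=0$ via Theorem \ref{flag-curvature-formula-thm}; reversibility enters not through ``$S$-curvature type terms'' but through Lemma \ref{lemma-3-6}, the orthogonality $\langle\hat{\mathfrak m}_{\pm\beta'},\hat{\mathfrak g}_0\rangle^F_u=0$, which genuinely requires $F(-y)=F(y)$. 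Case III is then a long enumeration (Sections 4--5): for each simple type and each admissible angle between $\alpha$ and $\beta$, one either exhibits a root pair satisfying the hypotheses of Lemma \ref{key-lemma-2}, or applies the totally geodesic reduction of Lemma \ref{totally-geodesic-lemma}, or pins down $\mathfrak h$ uniquely via Lemma \ref{diagonal-A-1-conjugation-lemma}. Case II is settled not by an inequality but by first forcing one simple factor to be $A_1$ and then invoking Wallach's Condition (A) classification (Proposition 6.1 of \cite{Wallach1972}). Your plan needs to be rebuilt around these lemmas and the systematic case enumeration rather than around isolated flag evaluations.
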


Notice that any invariant Finsler metric on the coset space $S^{2n-1}=\mathrm{SO}(2n)/\mathrm{SO}(2n-1)$ or $S^7=\mathrm{Spin}(7)/\mathrm{G}_2$ must be the standard Riemannian metric of positive constant curvature. On the other hand, as pointed out in \cite{HD11} and \cite{XD2015}, the Aloff-Wallach's spaces admit non-Riemannian homogeneous Randers metrics or $(\alpha,\beta)$-metrics with positive flag curvature and vanishing S-curvature. Moreover, any of the other coset spaces listed in Theorem \ref{mainthm} admits a non-Riemannian positively curved normal homogeneous Finsler metric; see \cite{XD2014}. Though it is not clearly stated in these literatures, the reversibility can be easily fulfilled for each of the non-Riemannian cases.

 Theorems \ref{mainthm-part-1},  \ref{mainthm-part-2} and  \ref{mainthm-part-3} will be proved separately in the sequel.
 These results cover most classification results in \cite{BB76}. However,  the classification of this paper is not complete for Case I when $G$ is compact simple,
where  the homogeneous spheres $\mathrm{SU}(n)/\mathrm{SU}(n-1)$,
$\mathrm{Sp}(n)/\mathrm{Sp}(n-1)$, and the $\mathrm{SU}(3)$-homogeneous
Aloff-Wallach's spaces, as well as some other possible candidates,  are known to have invariant positively curved Finsler metrics (\cite{HD11}).
One reason that our classification cannot be perfect in this case is as the following.
The method in this paper originates from the most traditional algebraic one, namely,  to prove a homogeneous space cannot  be positively curved, we try to find  a linearly independent and commutative pair $u$ and $v$ from $\mathfrak{m}$ such that the sectional (or flag) curvature for the plane spanned by them (the flag pole needs to be specified for the flag curvature) vanishes. But this method is not valid for some rare cases in Case I; see \cite{XW2015}.

In Section 2, we give a brief summary of basic notions in Finsler geometry and homogeneous Finsler geometry, and define the notion of equivalence which will be used throughout this paper. In Section 3, we present the general theme for the classification of odd dimensional positively curved homogeneous Finsler spaces, including the flag curvature formula, the rank equality, and some useful lemmas. In Sections 4 and 5, we discuss the classification of odd dimensional positively curved reversible homogeneous Finsler spaces in Case III. In Section 6, we discuss the classification of odd dimensional positively curved reversible homogeneous Finsler spaces in Case II and I. Section 7 is an appendix where we summarize the presentation of root systems of compact simple Lie algebras  used in this paper.

 We are grateful to J.A. Wolf, W. Ziller, B. Wilking and L. Huang  for helpful discussions. The first author thanks the Department of Mathematics at the University of California, Berkeley, for hospitality during the preparation of this paper.

\section{Preliminaries}

In this section, we summarize some definitions and fundamental results in Finsler geometry;
See \cite{CS04} and \cite{DE12} for more details. In this paper, we will only consider connected smooth manifolds and connected Lie groups.

\subsection{Minkowski norm and Finsler metric}

A {\it Minkowski norm} on a real vector space $\mathbf{V}$, $\dim\mathbf{V}=n$,
is a continuous real-valued function $F:\mathbf{V}\rightarrow[0,+\infty)$
satisfying the following conditions:
\begin{description}
\item{\rm (1)}\quad $F$ is positive and smooth on $\mathbf{V}\backslash\{0\}$;
\item{\rm (2)}\quad $F(\lambda y)=\lambda F(y)$ for any $\lambda >0$;
\item{\rm (3)}\quad With respect to any linear coordinates $y=y^i e_i$,
the Hessian matrix
\begin{equation}
(g_{ij}(y))=\left(\frac12[F^2]_{y^i y^j}\right)
\end{equation}
is positive definite at any nonzero $y$.
\end{description}

The Hessian matrix $(g_{ij}(y))$ and its inverse $(g^{ij}(y))$ can be used
to move up and down indices of relevant tensors in Finsler geometry.

Given a nonzero vector $y$, the Hessian matrix $(g_{ij}(y))$ defines an inner product
$\langle\cdot,\cdot\rangle_y$ on $\mathbf{V}$ by
\begin{equation*}
\langle u,v\rangle_y=g_{ij}(y)u^i v^j,
\end{equation*}
where $u=u^i e_i$ and $v=v^i e_i$. In the literature, the above inner
product is also denoted as $\langle\cdot,\cdot\rangle_y^F$ to specify the norm. Sometimes it is shortened as $g_y$ or $g_y^F$. It is obvious that the inner product can also be expressed as
\begin{equation}
\langle u,v\rangle_y=\frac12\frac{\partial^2}{\partial s\partial t}
[F^2(y+su+tv)]|_{s=t=0}.
\end{equation}
It is easy to check that the above definition is independent of the choice of
linear coordinates.

Let $M$ be a smooth manifold with dimension $n$. A {\it Finsler metric} $F$ on
$M$ is a continuous function $F:TM\rightarrow[0,+\infty)$ which is
positive and smooth on the slit tangent bundle $TM\backslash 0$, and whose
restriction to each tangent space is a Minkowski norm. Generally, $(M,F)$
is called a {\it Finsler manifold} or a {\it Finsler space}.

Here are some important examples.

Riemannian metrics are a special class of Finsler metrics such that
the Hessian matrix only depend on $x\in M$.
For a Riemannian manifold, the metric is often referred to as the global
smooth section $g_{ij}dx^i dx^j$ of $\mathrm{Sym}^2(T^* M)$.
Unless otherwise stated, we mainly deal with non-Riemannian metrics in this paper.

Randers metrics are the simplest and the most important class of
non-Riemannian metrics in Finsler geometry. A Randers metric can be written as
$F=\alpha+\beta$, where $\alpha$ is a Riemannian metric and
$\beta$ is a 1-form. The notion of Randers metrics can be
naturally generalized to $(\alpha,\beta)$-metrics. An
$(\alpha,\beta)$-metric is a Finsler metric of the form
$F=\alpha\phi(\beta/\alpha)$, where $\phi$ is a positive smooth real
function, $\alpha$ is a Riemannian metric and $\beta$ is a 1-form. In
recent years, there have been a lot of research works concerning
$(\alpha,\beta)$-metrics as well as Randers metrics.

Recently, we have defined and studied $(\alpha_1,\alpha_2)$-metrics and
introduced the more generalized class of $(\alpha_1,\alpha_2,\ldots,\alpha_k)$-metrics; see \cite{DX2014} and \cite{XDHH2014}.
Such metrics naturally appear in the study of homogeneous Finsler geometry.

A Minkowski norm or a Finsler metric is called {\it reversible} if $F(y)=F(-y)$
for any $y\in\mathbf{V}$ or $F(x,y)=F(x,-y)$ for any $x\in M$ and $y\in T_xM$.
Obviously, a Riemannian metric is reversible, and
a non-Riemannian Randers metric must be non-reversible. Note that a non-Riemannian  $(\alpha,\beta)$-metric is reversible if  the function $\phi$ is an even function, and there exist many non-reversible $(\alpha,\beta)$-metrics.

\subsection{Geodesic spray and geodesic}

Let   $(M,F)$ be a Finsler space. A local coordinate system $\{ x=(x^i)\in M;
y=y^j\partial_{x^j}\in T_x M\}$ on $TM$ is called a {\it standard local
coordinates system}.
The geodesic spray is a vector field $\mathbf{G}$ globally defined on $TM\backslash 0$. On a
standard local coordinate system, it can be expressed as
\begin{equation}
\mathbf{G}=y^i\partial_{x^i}-2\mathbf{G}^i\partial_{y^i},
\end{equation}
in which
\begin{equation}
\mathbf{G}^i=\frac14 g^{il}([F^2]_{x^k y^l}y^k-[F^2]_{x^l}).
\end{equation}


A non-constant curve $c(t)$ on $M$ is called a geodesic if
$(c(t),\dot{c}(t))$ is an integration curve of ${G}$,
in which the tangent field $\dot{c}(t)=\frac{d}{dt}c(t)$ along the
curve gives the speed. On a  standard local coordinate, a geodesic
$c(t)=(c^i(t))$ can be characterized by  the equations
\begin{equation}
\ddot{c}^i(t)+2\mathbf{G}^i(c(t),\dot{c}(t))=0.
\end{equation}

It is well known that $F(c(t),\dot{c(t)})$ is a constant function, or in other words,
a geodesic defined by the above equations must be  of nonzero constant speed.

\subsection{Riemann curvature and flag curvature}

 In Finsler geometry, there is a similar notion of  curvature as in the Riemannian case, which is called the Riemann curvature. It can be defined either by the Jacobi field or the structure equation for the curvature of the Chern connection.

On a standard local coordinate system, the Riemann curvature is
a linear map ${R}_y=R_k^i(y)\partial_{x^i}\otimes dx^k: T_x M\rightarrow T_xM$,
defined by
\begin{equation}\label{local-coordinate-formula-rieman-curvature}
R_k^i(y)=2\partial_{x^k}{G}^i-y^j\partial^2_{x^j y^k}{G}^i
+2{G}^j\partial_{y^j y^k}^2{G}^i-\partial_{y^j}{G}^i
\partial_{y^k}{G}^j.
\end{equation}
When the metric needs to be specified, the Riemann curvature is denoted
as ${R^F}_y=({R^F})_k^i(y)\partial_{x^i}\otimes dx^k$.
From Proposition 6.2.2 of \cite{Sh2001}, it is easily seen that the Riemann curvature $R_y$ is self-adjoint with respect to
the inner product $\langle\cdot,\cdot\rangle_y$.

Using the Riemann curvature, we can generalize the notion of sectional
curvature to  Finsler geometry, called the flag curvature. Let $y\in T_xM$
be a nonzero tangent vector and $\mathbf{P}$ a tangent plane in $T_xM$
containing $y$, and suppose it is linearly spanned by $y$ and $v$. Then the flag
curvature of the pair  $(y,\mathbf{P})$ is defined  by
\begin{equation}\label{def-flag-curv}
K(x,y,y\wedge v)=K(x,y,\mathbf{P})=
\frac{\langle R_y v,v\rangle_y}{\langle y,y\rangle_y\langle v,v\rangle_y
-\langle y,v\rangle_y^2}.
\end{equation}
Obviously, the flag curvature in (\ref{def-flag-curv}) does not depend on the choice of
$v$ but only on $y$ and $\mathbf{P}$. Sometimes we  also write the flag
curvature of a Finsler metric $F$ as $K^F(x,y,y\wedge v)$ or
$K^F(x,y,\mathbf{P})$ to indicate the metric explicitly.

\subsection{Totally geodesic submanifold}

A submanifold $N$ of a Finsler space $(M,F)$ can be naturally endowed with a submanifold Finsler metric, denoted as $F|_N$. At each point $x\in N$, the Minkowski norm $F|_N(x,\cdot)$
is just the restriction of the Minkowski norm $F(x,\cdot)$ to $T_x N$. We say that $(N,F|_N)$
is a {\it Finsler submanifold} or a {\it Finsler subspace}.

A Finsler subspace $(N,F|_N)$ of $(M,F)$ is called {\it totally geodesic} if any geodesic of
$(N,F|_N)$ is also a geodesic of $(M,F)$. On a standard local coordinate system $(x^i,y^j)$
such that $N$ is locally defined  by $x^{k+1}=\cdots=x^n=0$, the totally geodesic condition can be expressed as
\begin{equation*}
\mathbf{G}^i(x,y)=0, \quad k<i\leq n, x\in N, y\in T_x N.
\end{equation*}
A direct calculation shows that in this case the Riemann curvature $R_y^{F|_N}:T_x N\rightarrow T_x N$ of $(N,F|_N)$ is just the restriction of the Riemann curvature $R_y^F$ of $(M,F)$, where $y$ is a nonzero tangent
vector of $N$ at $x\in N$. Therefore we have

\begin{proposition}\label{prop-2-2}
Let $(N,F|_N)$ be a totally geodesic submanifold of $(M,F)$. Then for any $x\in N$,
 $y\in T_x N\backslash 0$, and a tangent plane $\mathbf{P}\subset T_x N$
containing $y$, we have
\begin{equation}
K^{F|_N}(x,y,\mathbf{P})=K^F(x,y,\mathbf{P}).
\end{equation}
\end{proposition}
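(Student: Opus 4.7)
The plan is to reduce everything to the direct-calculation assertion stated in the paragraph preceding the proposition, namely that under the totally geodesic hypothesis one has $R_y^{F|_N}=R_y^F|_{T_xN}$ for any $y\in T_xN\setminus 0$. Once this is established, the proposition follows from the defining formula (\ref{def-flag-curv}) for the flag curvature, as I explain at the end.

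First I would choose a standard local coordinate system $(x^i,y^j)$ on $TM$ adapted to $N$, so that locally $N$ is given by $x^{k+1}=\cdots=x^n=0$ and $TN$ is given by the additional equations $y^{k+1}=\cdots=y^n=0$. Let $\tilde{\mathbf{G}}^i$ denote the geodesic spray coefficients of $(N,F|_N)$ with respect to the coordinates $(x^1,\ldots,x^k,y^1,\ldots,y^k)$. The totally geodesic condition, together with the geodesic equation, yields the two pieces of information that I will feed into the Riemann curvature formula (\ref{local-coordinate-formula-rieman-curvature}):
\begin{equation*}
\mathbf{G}^i(x,y)=\tilde{\mathbf{G}}^i(x,y)\quad\text{for }1\leq i\leq k,\qquad \mathbf{G}^j(x,y)=0\quad\text{for }k<j\leq n,
\end{equation*}
valid for all $x\in N$ and $y\in T_xN$. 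The second identity means in particular that $\mathbf{G}^j$ vanishes identically on the submanifold $TN\subset TM$, so its tangential partial derivatives $\partial_{x^\ell}\mathbf{G}^j$ and $\partial_{y^\ell}\mathbf{G}^j$ (for $\ell\leq k$) also vanish along $TN$ when $j>k$.

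Next I would plug these identities into (\ref{local-coordinate-formula-rieman-curvature}) for $1\leq i,k'\leq k$ and $y\in T_xN$. In the sums over $j$, the range $j>k$ contributes nothing: the factor $y^j$ vanishes in the second term, the factor $\mathbf{G}^j$ vanishes in the third term, and in the fourth term the factor $\partial_{y^{k'}}\mathbf{G}^j$ vanishes along $TN$. Consequently each summand reduces to the analogous summand computed from $\tilde{\mathbf{G}}^i$ on $N$, and one recovers the intrinsic Riemann curvature formula for $(N,F|_N)$. This is the direct calculation alluded to in the preceding paragraph; the main bookkeeping obstacle is simply keeping track of which indices run over $\{1,\ldots,k\}$ and which over $\{k+1,\ldots,n\}$, and applying the vanishing of the correct partial derivatives, but no new idea is required.

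Finally, observe that the inner product $\langle\cdot,\cdot\rangle_y^{F|_N}$ on $T_xN$ coincides with the restriction of $\langle\cdot,\cdot\rangle_y^F$: indeed, by the definition (2.2) it depends only on second partial derivatives of $F^2$ along directions in $T_xN$, and by construction $F^2|_{T_xN}=(F|_N)^2$. Hence for any $y\in T_xN\setminus 0$ and any plane $\mathbf{P}\subset T_xN$ spanned by $y$ and some $v\in T_xN$, the numerator $\langle R_y^F v,v\rangle_y^F = \langle R_y^{F|_N}v,v\rangle_y^{F|_N}$ and the denominator $\langle y,y\rangle_y\langle v,v\rangle_y-\langle y,v\rangle_y^2$ are identical whether computed in $(M,F)$ or in $(N,F|_N)$, giving $K^F(x,y,\mathbf{P})=K^{F|_N}(x,y,\mathbf{P})$ as required.
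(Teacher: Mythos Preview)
Your proof is correct and follows essentially the same approach as the paper: the paper simply asserts (in the paragraph preceding the proposition) that ``a direct calculation shows'' $R_y^{F|_N}$ is the restriction of $R_y^F$ and deduces the proposition from (\ref{def-flag-curv}), while you have carefully supplied the details of that direct calculation by tracking indices in (\ref{local-coordinate-formula-rieman-curvature}) and verifying that the Hessian inner products agree on $T_xN$.
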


As in Riemannian geometry, the local properties of exponential maps implies any connected component $N$ of the common fixed points
for a set of isometries $\{\rho_a,a\in\mathcal{A}\}$
of $(M,F)$ is a totally geodesic sub-manifolds of $(M,F)$. To be more precise, for each point
$x\in N$,
$$T_x N=\{y\in T_x M|{\rho_a}_*y=y,\forall a\in \mathcal{A}\}$$
and $N$ contains a small neighborhood of $x$ in
$\exp_x T_x N$.

\subsection{Homogeneous Finsler geometry}

Let $(M,F)$ be  a connected Finsler manifold.  If the full group $I(M,F)$ of isometries of $(M, F)$ (or equivalently, the identity component $I_0(M,F)$ of $I(M,F)$)
acts transitively on $M$, then we say that $(M, F)$ is a
{\it homogeneous Finsler space},
or $F$ is  a {\it homogeneous Finsler metric}.  It is shown in \cite{DH2004} that $(I(M,F)$
(hence $G=I_0(M,F)$) is a Lie transformation group on $M$. Let $H$ be the compact isotropic subgroup of $G$ at a point $o\in M$. Then $M$ is diffeomorphic to the smooth coset space
$G/H$, associated with a canonical smooth projection map
$\pi:G\rightarrow M=G/H$ such that $\pi(e)=o$. The tangent space
$T_o M$ can be naturally identified with $\mathfrak{m}=\mathfrak{g}/\mathfrak{h}$, in which $\mathfrak{g}$
and $\mathfrak{h}$ are the Lie algebras of $G$ and $H$, respectively.
The isotropy action of $H$ on $T_oM$ coincides with the induced $\mathrm{Ad}(H)$-action on $\mathfrak{m}$. In the cases we will consider in this paper, $\mathfrak{m}$ can be realized as a
complement subspace of $\mathfrak{h}$ in $\mathfrak{g}$ which is preserved
by $\mathrm{Ad}(H)$-actions. Then we have an {\it $\mathrm{Ad}(H)$-invariant decomposition}
$\mathfrak{g}=\mathfrak{h}+\mathfrak{m}$ satisfying the reductive condition
$[\mathfrak{h},\mathfrak{m}]\subset\mathfrak{m}$.

If $(M,F)$ is positively curved, then by the Bonnet-Myers Theorem,
$M$ must be compact, hence $G=I_0(M,F)$ is also compact. Fix a
bi-invariant inner product on $\mathfrak{g}$. Then we can realize $\mathfrak{m}$ as the bi-invariant orthogonal complement of $\mathfrak{h}$.
In this case, the decomposition $\mathfrak{g}=\mathfrak{h}+\mathfrak{m}$
is called a {\it bi-invariant orthogonal decomposition} for the homogeneous space $G/H$.

Notice that for any closed connected subgroup $G$ of $I_0(M,F)$ which acts transitively on $M$, we have a corresponding representation $M=G/H$.
The most typical example is the nine classes of
homogeneous spheres; See \cite{Bo1940}.
For the convenience, we will consider a slightly more general situation, namely, for a positively curved homogeneous Finsler space $M=G/H$, we only require that the Lie algebra $\mathfrak{g}$ of $G$    is compact (i.e., $G$ is quasi-compact). The notion of bi-invariant orthogonal decomposition is still valid
in this case.

To simplify the discussion and avoid  unnecessary iterance in the classification, we will not distinguish homogeneous Finsler spaces which are locally isometric to each other. In particular, we will call $(G_1/H_1,F_1)$ and $(G_2/H_2,F_2)$ (with corresponding bi-invariant orthogonal decompositions for the compact Lie groups $\mathfrak{g}_1$ and $\mathfrak{g}_2$ respectively) {\it equivalent} if one of the following conditions is satisfied
\begin{description}
\item{\rm (1)}\quad $G_1$ is a covering group of $G_2$, $H_1$ has the same identity component as $H_2$, and $F_1$ is naturally induced from $F_2$, up to a positive scalar;
\item{\rm (2)}\quad $G_1=G_2\times G'$, $H_1=H_2\times G'$,  and $F_1$ and $F_2$ are induced from the same Minkowski norm, when $\mathfrak{m}_1$ and $\mathfrak{m}_2$ are naturally identified as the same vector space;
\item{\rm (3)}\quad There exists a group isomorphism from $G_1$ to $G_2$, which maps
$H_1$ onto $H_2$ and induces an isometry from $F_1$ to $F_2$.
\end{description}
 The above notion actually  defines an {\it equivalent relation} on the set of compact homogeneous Finsler spaces $G/H$ with $\mathfrak{g}=\mathrm{Lie}(G)$ compact. In the following, compact homogeneous Finsler spaces in the same equivalent class will not be distinguished. Thus our classification will be local, or in other words, on the Lie algebra level.

\section{The general theme for the classification}
In this section, we establish the theme for our classification.
\subsection{A flag curvature formula for homogeneous Finsler spaces}

In \cite{XDHH2014}, we proved the following theorem:
\begin{theorem} \label{flag-curvature-formula-thm}
Let $(G/H,F)$ be a connected homogeneous Finsler space, and $\mathfrak{g}=\mathfrak{h}+\mathfrak{m}$ be an $\mathrm{Ad}(H)$-invariant
decomposition for $G/H$. Then for any linearly independent commutative pair
$u$ and $v$ in $\mathfrak{m}$ satisfying
$
\langle[u,\mathfrak{m}],u\rangle^F_u=0
$,
we have
\begin{equation*}
K^F(o,u,u\wedge v)=\frac{\langle U(u,v),U(u,v)\rangle_u^F}
{\langle u,u\rangle_u^F \langle v,v\rangle_u^F-
{\langle u,v\rangle_u^F}\langle u,v\rangle_u^F},
\end{equation*}
where $U$ is a bilinear map from $\mathfrak{m}\times \mathfrak{m}$ to $\mathfrak{m}$ defined by
\begin{equation*}
\langle U(u,v),w\rangle_u^F=\frac{1}{2}(\langle[w,u]_\mathfrak{m},v\rangle_u^F
+\langle[w,v]_\mathfrak{m},u\rangle_u^F), \mbox{ for any }w\in\mathfrak{m},
\end{equation*}
here $[\cdot,\cdot]_\mathfrak{m}=\mathrm{pr}_\mathfrak{m}\circ[\cdot,\cdot]$ and $\mathrm{pr}_\mathfrak{m}$ is the projection
with respect to the given $\mathrm{Ad}(H)$-invariant decomposition.
\end{theorem}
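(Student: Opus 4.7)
The plan is to reduce the flag-curvature computation to a Jacobi-field calculation along a homogeneous geodesic. First, I would invoke the geodesic criterion for homogeneous Finsler spaces (a Latifi-type lemma): the hypothesis $\langle[u,\mathfrak{m}],u\rangle_u^F=0$ is exactly the condition that $u$ is a geodesic vector, so the curve $\gamma(t)=\exp(tu)\cdot o$ is a geodesic of $(G/H,F)$ with initial velocity corresponding to $u\in\mathfrak{m}\cong T_o(G/H)$. Since the flag curvature $K^F(o,u,u\wedge v)$ only involves $\langle R_u v,v\rangle_u^F$ together with the reference inner product $\langle\cdot,\cdot\rangle_u^F$, identifying $R_u(v)$ is all that is needed.

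Next I would exploit Killing vector fields generated by left translation. For each $X\in\mathfrak{g}$ one obtains a Killing field $\tilde X$ on $G/H$ whose value at $o$ is the $\mathfrak{m}$-projection of $X$. Along any geodesic of a Finsler manifold the restriction of a Killing field is a Jacobi field, so $J(t)=\tilde v(\gamma(t))$ satisfies $D_{\dot\gamma}^2 J+R_{\dot\gamma}J=0$. The commutativity $[u,v]=0$ implies that $\exp(tu)$ and $\exp(sv)$ commute, which identifies $J(t)$ with the $\exp(tu)$-translation of $v$ along $\gamma$ and eliminates first-order transport contributions that would otherwise complicate the computation. One then computes $D_{\dot\gamma}J|_{t=0}$ via the Chern-connection Koszul formula at reference direction $u$, producing an expression in the brackets $[u,\cdot]_\mathfrak{m}$ and $[v,\cdot]_\mathfrak{m}$ contracted through $\langle\cdot,\cdot\rangle_u^F$.

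A second covariant differentiation combined with the Jacobi equation then gives a formula for $\langle R_u v,v\rangle_u^F$ as a sum of bracket-inner-product terms. Two crucial simplifications collapse this to the clean form in the statement. First, the geodesic condition on $u$ forces $U(u,u)=0$, killing the otherwise nontrivial self-interaction term. Second, $[u,v]_\mathfrak{m}=0$ removes the analogue of the Riemannian naturally-reductive term $\tfrac{1}{4}|[u,v]_\mathfrak{m}|^2$. What remains reorganizes precisely as $\langle U(u,v),U(u,v)\rangle_u^F$, and dividing by the Gram determinant $\langle u,u\rangle_u^F\langle v,v\rangle_u^F-(\langle u,v\rangle_u^F)^2$ yields the claimed formula. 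I expect the main obstacle to lie in the Chern-connection Koszul formula: unlike the Levi-Civita case, the Finslerian connection depends on the reference vector and in principle contributes Cartan-tensor corrections, and one must verify carefully that these extra terms either vanish when the reference vector coincides with the geodesic velocity $u$ or are absorbed by the commutativity of $u$ and $v$, so that only the purely algebraic $U$-term survives.
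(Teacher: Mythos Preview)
Your approach via Jacobi fields along a homogeneous geodesic is genuinely different from the paper's argument. The paper proceeds purely algebraically by quoting L.~Huang's general Riemann-curvature formula for homogeneous Finsler spaces, built from a spray vector field $\eta(u)$ and a connection operator $N(u,\cdot)$; the hypothesis $\langle[u,\mathfrak{m}],u\rangle_u^F=0$ is precisely $\eta(u)=0$, which simultaneously kills the $D_{\eta(u)}N$ term in Huang's $\tilde R(u)$ and the Cartan-tensor correction $C_u^F(\cdot,\cdot,\eta(u))$ in the definition of $N$, so that $N(u,v)=U(u,v)$. A short algebraic manipulation of the remaining terms of $\tilde R(u)$, using only $[u,v]=0$ and the defining identity for $U$, then gives $\langle R_u v,v\rangle_u^F=\langle U(u,v),U(u,v)\rangle_u^F$ directly.

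Your geometric route should also go through, and the obstacle you flag is exactly the one the paper's method sidesteps. In Huang's framework the Cartan tensor enters only through $C_u^F(\cdot,\cdot,\eta(u))$, which vanishes once $\eta(u)=0$; in your language this is the statement that along the geodesic $\gamma$ with reference vector $\dot\gamma=u$, every Cartan correction in the Chern-connection Koszul formula is contracted against the spray of $u$, which the geodesic condition annihilates. So the extra terms do disappear, but verifying this inside the Jacobi-field computation means reproducing by hand the bookkeeping that Huang's formula already packages. The trade-off is that your argument is more self-contained and geometrically transparent, while the paper's is much shorter because it imports the heavy lifting from Huang's result. The paper also mentions an earlier proof via Finslerian submersion, so your proposal would be a third independent route.
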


The flag curvature formula in Theorem \ref{flag-curvature-formula-thm}
only deals with some special flags spanned by commutative pairs, but it is very convenient in this paper. In \cite{XDHH2014}, we have provided a proof of this theorem by the submersion technique. To make this work more self-contained, we quote here another shorter proof
by L. Huang, which can also be found in \cite{XDHH2014}.

In \cite{Huang2013}, L.~Huang has obtain a general flag curvature formula
of homogeneous Finsler spaces, using the technique of invariant frames.
To introduce his formula, we first define the {\it spray vector field} $\eta:\mathfrak{m}\backslash\{0\}\rightarrow\mathfrak{m}$
and the {\it connection operator} $N:(\mathfrak{m}\backslash\{0\})\times\mathfrak{m}\rightarrow\mathfrak{m}$.
For any $u\in\mathfrak{m}\backslash\{0\}$,
$\eta(u)$ is defined by
\begin{equation*}
	\langle\eta(u), w\rangle_u^F=\langle u,[w,u]_{\mathfrak{m}}\rangle_u^F, \quad \forall w\in\mathfrak{m},
\end{equation*}
and $N(u,\cdot)$ is a linear operator
on $\mathfrak{m}$ determined by
\begin{equation*}
	\begin{aligned}
	2\langle N(u,w_1),w_2\rangle_u^F=\langle [w_2,w_1]_{\mathfrak{m}}, u\rangle_u^F & + \langle [w_2,u]_{\mathfrak{m}}, w_1\rangle_u^F+\langle [w_1,u]_{\mathfrak{m}},w_2\rangle_u^F\\
	& - 2C^F_u(w_1,w_2,\eta(u)),\quad \forall w_1,w_2\in\mathfrak{m}.
	\end{aligned}	
\end{equation*}
Using these two notions, L.~Huang proved the following formula
for Riemann curvature $R_u: T_o(G/H)\rightarrow T_o(G/H)$,
\begin{equation}\label{6000}
	\langle R_u(w),w\rangle_u^F
	=\langle [[w,u]_{\mathfrak{h}},w],u\rangle_u^F
	+\langle \tilde{R}(u)w,w\rangle_u^F,\quad \forall w\in\mathfrak{m},
\end{equation}
where the linear operator
$\tilde{R}(u):\mathfrak{m}\rightarrow\mathfrak{m}$
is given by
$$\tilde{R}(u)w=D_{\eta(u)}N(u,w)-N(u,N(u,w))+
N(u,[u,w]_\mathfrak{m})-[u,N(u,w)]_\mathfrak{m},$$
here $D_{\eta(u)}N(u,w)$ is the derivative of $N(\cdot,w)$ at $u\in\mathfrak{m}\backslash\{0\}$ in the direction of $\eta(u)$. In particular, if $\eta(u)=0$,
then $D_{\eta(u)}N(u,w)=0$.

Now suppose $u\in\mathfrak{m}\backslash\{0\}$ satisfies $\langle[u,\mathfrak{m}],u\rangle_u^F=0$, i.e.,
$\eta(u)=0$. Then for any $v\in\mathfrak{m}$  commutative with $u$,
we have $N(u,v)=U(u,v)$. Thus
\begin{eqnarray*}
\langle R_u (v),v\rangle_u^F &=&
-\langle N(u,N(u,v)),v\rangle_u^F
-\langle [u,N(u,v)],v\rangle_u^F\\
&=& -\frac12(\langle [v,N(u,v)]_\mathfrak{m},u\rangle_u^F+
\langle[N(u,v),u]_\mathfrak{m},v\rangle_u^F)
+\langle [N(u,v),u],v\rangle_u^F\\
&=& \frac12(\langle[N(u,v),v],u\rangle_u^F+
\langle[N(u,v),u],v\rangle_u^F)\\
&=&\langle U(u,v),N(u,v)\rangle_u^F=\langle U(u,v),U(u,v)\rangle_u^F.
\end{eqnarray*}
From this the flag curvature formula in Theorem \ref{flag-curvature-formula-thm} follows immediately.

\subsection{The totally geodesic technique and the rank equality}

Assume that $(G/H,F)$ is a positively curved homogeneous Finsler space,
with a bi-invariant orthogonal decomposition $\mathfrak{g}=\mathfrak{h}+\mathfrak{m}$ for the compact Lie group $\mathfrak{g}$.

Let $\mathfrak{t}$ be a Cartan subalgebra of $\mathfrak{g}$ such that
$\mathfrak{t}\cap\mathfrak{h}$ is a Cartan subalgebra of $\mathfrak{h}$.
For simplicity, we  just call  $\mathfrak{t}$ a {\it fundamental Cartan subalgebra}.
Fix a subalgebra $\mathfrak{t}'$ of $\mathfrak{t}\cap\mathfrak{h}$,
and denote the identity component of $C_G(\mathfrak{t}')$ as $G'$. Let
$H'=G'\cap H$. Then $(G'/H',F|_{G'/H'})$ is a homogeneous
submanifold of $(G/H,F)$.

We first prove the following useful lemma.

\begin{lemma} \label{totally-geodesic-lemma}
Keep all the above notation. Then $(G'/G'\cap H, F|_{G'/H'})$
is totally geodesic in $(G/H,F)$. In particular, if $G/H$ admits positively
curved homogeneous Finsler metrics and $\dim G'/H'>1$, then $G'/H'$ also admits positively curved homogeneous Finsler metrics.
\end{lemma}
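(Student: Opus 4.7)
The plan is to realize $G'/H'$ as the connected component through $o=eH$ of the fixed-point set of a torus of isometries of $(G/H,F)$, and then invoke the fixed-point principle quoted at the end of Subsection 2.4 together with Proposition \ref{prop-2-2}. Since $\mathfrak{t}'\subseteq\mathfrak{t}\cap\mathfrak{h}\subseteq\mathfrak{h}$, the closure $T'=\overline{\exp_G\mathfrak{t}'}$ is a torus contained in $H$. Each element of $T'$ acts on $G/H$ by left translation, which is an isometry of $F$, and it fixes $o$. Therefore the connected component $N$ of $\mathrm{Fix}(T')\subseteq G/H$ containing $o$ is a totally geodesic submanifold of $(G/H,F)$ with
\[
T_oN=\{v\in\mathfrak{m}:\mathrm{Ad}(t)v=v\text{ for all }t\in T'\}.
\]

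Next I would identify $N$ with $G'/H'$. Set $\mathfrak{g}'=C_{\mathfrak{g}}(\mathfrak{t}')=\mathrm{Lie}(G')$. Connectedness of $T'$ implies that its adjoint fixed subspace on $\mathfrak{m}$ is exactly $\mathfrak{m}\cap\mathfrak{g}'$, and this is the reductive complement of $\mathfrak{h}'=\mathfrak{g}'\cap\mathfrak{h}$ in $\mathfrak{g}'$, i.e., the tangent space of $G'/H'$ at $o$. On the other hand, every element of $G'$ commutes with $\mathfrak{t}'$, hence with $T'$, so for any $g\in G'$ and $t\in T'$ we have $t\cdot(gH)=g\cdot(tH)=gH$; thus $G'\cdot o\subseteq\mathrm{Fix}(T')$, and by connectedness of $G'$ this orbit lies in $N$. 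The resulting natural map $G'/H'\rightarrow N$ is an immersion whose differential at $o$ is an isomorphism by the tangent-space identification, so it is a local diffeomorphism; combined with $G'$-homogeneity of the source and connectedness of $N$, it is a diffeomorphism onto $N$. Restricting $F$ then identifies $(G'/H',F|_{G'/H'})$ with $(N,F|_N)$ as a totally geodesic Finsler submanifold of $(G/H,F)$.

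For the second assertion, assume $\dim G'/H'>1$. For any nonzero $y\in T_o(G'/H')$ and any $2$-plane $\mathbf{P}\subseteq T_o(G'/H')$ containing $y$, Proposition \ref{prop-2-2} yields
\[
K^{F|_{G'/H'}}(o,y,\mathbf{P})=K^F(o,y,\mathbf{P})>0.
\]
Since $F|_{G'/H'}$ is $G'$-invariant and $G'$ acts transitively on $G'/H'$, positivity at $o$ propagates to every point, so $(G'/H',F|_{G'/H'})$ is a positively curved homogeneous Finsler space. The only non-routine step I anticipate is the global identification $G'\cdot o=N$ in the second paragraph: matching the tangent spaces at $o$ only gives a local statement, and one must combine it with the connectedness of $G'$ and $N$, together with the inclusion $G'\cdot o\subseteq N$, to pass from the infinitesimal to the global identification; everything else is formal once Proposition \ref{prop-2-2} and the fixed-point principle are in hand.
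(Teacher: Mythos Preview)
Your proposal is correct and follows essentially the paper's first proof: realize $G'/H'$ as the connected component through $o$ of the fixed-point set of the torus $T'\subset H$ acting by isometries, then apply the fixed-point principle from Subsection~2.4 together with Proposition~\ref{prop-2-2}. The paper compresses your identification $G'\cdot o=N$ by citing Bredon's result that $\mathrm{Fix}(T')$ is a finite union of $N_G(T')$-orbits, but your direct argument (open by equal dimension, closed by compactness of $G'$, hence all of $N$ by connectedness) accomplishes the same thing.

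The paper also supplies a second, independent proof by direct computation: using the geodesic spray formula for homogeneous Finsler spaces from \cite{XD2014-2}, one checks that the spray of $(G/H,F)$ at points of $G'/H'$ and tangent directions in $\mathfrak{g}'\cap\mathfrak{m}$ reduces exactly to the spray of $(G'/H',F|_{G'/H'})$, using that $[\mathfrak{g}'\cap\mathfrak{m},[\mathfrak{t}',\mathfrak{g}]\cap\mathfrak{m}]_\mathfrak{m}\subset[\mathfrak{t}',\mathfrak{g}]\cap\mathfrak{m}$ and the $g_y^F$-orthogonality $\langle\mathfrak{g}'\cap\mathfrak{m},[\mathfrak{t}',\mathfrak{g}]\cap\mathfrak{m}\rangle_y^F=0$ coming from $\mathrm{Ad}(H)$-invariance. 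This alternative avoids any appeal to fixed-point structure theorems and makes the argument entirely intrinsic to the homogeneous-space data; your approach, by contrast, is shorter and more conceptual but leans on the general totally-geodesic fixed-point principle.
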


\begin{proof}
We will present two proofs of the theorem. The first proof uses Corollary II.5.7 of
\cite{Br}, which asserts that the set of  common fixed points of $T'$ is a disconnected
union of finite orbits of $N_G(T')=\{g\in G|g^{-1}T'g=T'\}$. Thus the connected component of $N_G(T')\cdot o$ containing $o=eH$, which coincides with $G'/H'$, is a totally geodesic submanifold of
$(G/H,F)$. Therefore, if  $(G/H,F)$ is positively curved and $\dim G'/H' >1$, then the homogeneous Finsler space $(G'/H',F|_{G'/H'})$
has positive flag curvature.

The second proof will be completed through a  direct calculation, using the geodesic spray formula for homogeneous Finsler spaces in \cite{DX2014-2}.

Note that the Lie algebra $\mathfrak{g}'$ of $G'$ is the centralizer
$C_{\mathfrak{g}}(\mathfrak{t}')$ of $\mathfrak{t}'$ in
$\mathfrak{g}$. Since $\mathfrak{t}'\subset\mathfrak{h}$, we also have the
decomposition
$\mathfrak{g}'=(\mathfrak{g}'\cap\mathfrak{h})+(\mathfrak{g}'\cap\mathfrak{m})$,
where $\mathfrak{g}'\cap\mathfrak{h}=C_\mathfrak{h}(\mathfrak{t}')$ is
the Lie algebra of $H'$.
Since the bi-invariant orthogonal complement ${\mathfrak{g}'}^{\perp}$ is equal to $[\mathfrak{t}',\mathfrak{g}]$,
we also have
$$[\mathfrak{t}',\mathfrak{g}]=[\mathfrak{t}',\mathfrak{h}]+
[\mathfrak{t}',\mathfrak{m}]=({\mathfrak{g}'}^{\perp}\cap\mathfrak{h})
+({\mathfrak{g}'}^{\perp}\cap\mathfrak{m}).$$
Let $v_1$, $\ldots$, $v_m$, $v_{m+1}$, $\ldots$, $v_{m+n}$ be an orthogonal
basis of $\mathfrak{m}$ with respect to the  bi-invariant inner product,
such that $v_i\in \mathfrak{g}'\cap\mathfrak{m}$, for $1\leq i\leq m$, and denote the Killing vector field on $M=G/H$ generated by $v_j$  as $X_j$, for $1\leq j\leq m+n$. Then the the restriction of $X_i$ to $M'$ is a  Killing vector fields of $(M',F|_{M'})$, $1\leq i\leq m$. It is easily seen that there is an neighborhood $U$ of the origin
$o$, such that  $y=y^iX_i$ defines a linear coordinate  system for  $y\in TU$.

We now consider the geodesic spray $\mathbf{G}(o,y)$ of $(M,F)$ at $o$ when $y$ lies in the linear span of  $v_1$, $\ldots$, $v_m$.

In \cite{XD2014-2}, we have proven that
\begin{equation}\label{geodesic-formula}
\mathbf{G}(o,y)=y^i\tilde{X}_i+ g^{il}c^k_{jl}g_{kh}y^h y^j\partial_{y^i},
\end{equation}
where $\tilde{X}_i$ is the tangent vector field on
$T(TM\backslash\{0\})$ naturally induced by $X_i$, and the coefficients $c^k_{ij}$ are defined by
$[v_i,v_j]_\mathfrak{m}=c^k_{ij}v_k$.
Since $[C_{\mathfrak{g}}(\mathfrak{t}'),[\mathfrak{t}',\mathfrak{g}]]
\subset[\mathfrak{t}',\mathfrak{g}]$, we have
$[\mathfrak{g}'\cap\mathfrak{m},[\mathfrak{t}',\mathfrak{g}]\cap\mathfrak{m}]_\mathfrak{m}
\subset[\mathfrak{t}',\mathfrak{g}]\cap\mathfrak{m}$, hence $c^k_{ij}=0$, for
$i\leq m$, $j>m$ and $k\leq m$. On the other hand, since $F$ is
$\mathrm{Ad}(H)$-invariant, by \cite{DH2004}, we have
$$\langle [h,v],w\rangle_y^{F}+\langle v,[h,w]\rangle_y^F=
-2C_u([h,y],v,w),\quad\forall h\in\mathfrak{h},v\in\mathfrak{g}'\cap\mathfrak{m}, w\in\mathfrak{m}.$$
In particular,  for $h\in\mathfrak{t}'$, we have $[h,v]=[h,y]=0$. Then we have
\begin{equation}\label{yk}
\langle \mathfrak{g}'\cap\mathfrak{m},[\mathfrak{t}',
\mathfrak{m}]\rangle_u^F=
\langle \mathfrak{g}'\cap\mathfrak{m},[\mathfrak{t}',
\mathfrak{g}]\cap\mathfrak{m}\rangle_u^F=0.
\end{equation}
We now suppose that   $y^k=0$ for any $k>m$. Then (\ref{yk}) implies that  $g^{ij}=g_{ij}=0$   for  $1\leq i\leq m<j\leq m+n$.
Hence in this case, the only nonzero terms in the right side of
(\ref{geodesic-formula}) are $y^i X_i$ with $i\leq m$, and
$g^{il}c^k_{jl}g_{kh}y^h y^j\partial_{y^i}$ with $i,j,k,h,l\leq m$. Consequently  in this case
$\mathbf{G}(o,y)$ is equal to the geodesic spray of $(G'/H',F|_{G'/H'})$ at $(o,y)$.
By the homogeneity,   the above assertion is  valid for any $g\in G'$. Therefore  $(G'/H',F|_{G'/H'})$ is totally geodesic in $(G/H,F)$.
\end{proof}

From the first proof, we see that the lemma is still valid with $T'$ changed to other subgroups in $H$. This will be convenient when it is difficult to calculate directly with  $G'$, but up
to equivalence, $G'$ and $H'$ contains a common product factor $T'$ which can be cancelled.

We now give an immediate application of Lemma \ref{totally-geodesic-lemma}.
Assume that $\mathfrak{t}'=\mathfrak{t}\cap\mathfrak{h}$,
and $F'=F|_{G'/H'}$ induces a left invariant Finsler metric $F''$ on the compact Lie
group $G''$ with $\mathrm{Lie}(G'')=  C_\mathfrak{g}(\mathfrak{t}\cap\mathfrak{h})\cap\mathfrak{m}$.
Then the above lemma implies that if $\dim G''>1$,
then $F''$ is positively curved. Thus by Theorem 5.1
of \cite{DH2013}, we have $G''=\mathrm{U}(1)$, $\mathrm{SU}(2)$ or $\mathrm{SO}(3)$. This proves the following rank equality, which is a special case of Theorem 5.2 in \cite{XDHH2014}.
\begin{corollary} \label{rank-equality-corollary}
Let $(G/H,F)$ be an odd dimensional positively curved
homogeneous Finsler space with compact $\mathfrak{g}=\mathrm{Lie}(G)$. Then
$\mathrm{rk}\mathfrak{g}=\mathrm{rk}\mathfrak{h}+1$.
\end{corollary}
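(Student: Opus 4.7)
The plan is to apply Lemma \ref{totally-geodesic-lemma} to the specific choice $\mathfrak{t}'=\mathfrak{t}\cap\mathfrak{h}$, along the lines indicated in the paragraph preceding the statement. Because $\mathfrak{t}\cap\mathfrak{h}$ is a Cartan subalgebra of $\mathfrak{h}$, its centralizer in $\mathfrak{h}$ coincides with itself, so $H'=G'\cap H$ has Lie algebra $\mathfrak{t}\cap\mathfrak{h}$ and the totally geodesic submanifold $G'/H'$ has tangent space at the origin canonically identified with $\mathfrak{g}'\cap\mathfrak{m}=C_\mathfrak{g}(\mathfrak{t}\cap\mathfrak{h})\cap\mathfrak{m}$. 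After cancelling the common toric factor $T\cap H$ from $G'$ and $H'$ via the equivalence of Subsection 2.5, one obtains an equivalent description of $G'/H'$ as a compact Lie group $G''$ with $\mathrm{Lie}(G'')=\mathfrak{g}'\cap\mathfrak{m}$, endowed with a left-invariant Finsler metric $F''$ induced by $F|_{G'/H'}$.

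Next, I would compute $\dim G''$ using the root space decomposition of $\mathfrak{g}$ with respect to $\mathfrak{t}$. Writing $k$ for the number of positive roots of $\mathfrak{g}$ whose restriction to $\mathfrak{t}\cap\mathfrak{h}$ is zero, one has $\dim\mathfrak{g}'=\mathrm{rk}\,\mathfrak{g}+2k$, whence
\begin{equation*}
\dim G''=\dim(\mathfrak{g}'\cap\mathfrak{m})=(\mathrm{rk}\,\mathfrak{g}-\mathrm{rk}\,\mathfrak{h})+2k.
\end{equation*}
In particular $\dim G''$ and $\mathrm{rk}\,\mathfrak{g}-\mathrm{rk}\,\mathfrak{h}$ have the same parity. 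A parity count in the ambient Lie algebras gives $\dim\mathfrak{g}\equiv\mathrm{rk}\,\mathfrak{g}\pmod 2$ and $\dim\mathfrak{h}\equiv\mathrm{rk}\,\mathfrak{h}\pmod 2$, since the non-zero roots of a compact Lie algebra come in $\pm$-pairs; combined with $\dim(G/H)=\dim\mathfrak{m}$ being odd, this yields that $\mathrm{rk}\,\mathfrak{g}-\mathrm{rk}\,\mathfrak{h}$ is odd. Since $\mathfrak{t}\cap\mathfrak{h}\subset\mathfrak{t}$ forces $\mathrm{rk}\,\mathfrak{g}\geq\mathrm{rk}\,\mathfrak{h}$, one concludes $\mathrm{rk}\,\mathfrak{g}-\mathrm{rk}\,\mathfrak{h}\geq 1$ and $\dim G''\geq 1$.

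Finally I would invoke the ``in particular'' clause of Lemma \ref{totally-geodesic-lemma}: if $\dim G''>1$, then $(G'',F'')$ is a compact positively curved Lie group with a left-invariant Finsler metric, so Theorem 5.1 of \cite{DH2013} forces $G''\in\{\mathrm{SU}(2),\mathrm{SO}(3)\}$ and $\dim G''=3$. Thus $\dim G''\in\{1,3\}$. The case $\dim G''=1$ gives immediately $k=0$ and $\mathrm{rk}\,\mathfrak{g}-\mathrm{rk}\,\mathfrak{h}=1$. For $\dim G''=3$, note that $k=0$ would make $\mathfrak{g}'=\mathfrak{t}$ and hence $\mathfrak{g}'\cap\mathfrak{m}$ abelian, forcing $G''$ to be a $3$-torus, incompatible with $\mathrm{SU}(2)$ or $\mathrm{SO}(3)$; therefore $k\geq 1$, and $\mathrm{rk}\,\mathfrak{g}-\mathrm{rk}\,\mathfrak{h}=3-2k\leq 1$, which combined with the lower bound yields $\mathrm{rk}\,\mathfrak{g}-\mathrm{rk}\,\mathfrak{h}=1$.

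The substantive input is the totally geodesic statement of Lemma \ref{totally-geodesic-lemma} together with the classification of positively curved left-invariant Finsler metrics on compact Lie groups from \cite{DH2013}; granted these, the corollary reduces to an elementary parity and dimension count. The mildly delicate point, and the one I would be most careful about, is verifying that cancelling the toric factor $T\cap H$ really produces a bona fide left-invariant Finsler metric on $G''$ in the sense of equivalence — but this is precisely what the equivalence relation of Subsection 2.5 is designed to formalize.
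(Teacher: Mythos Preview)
Your proof is correct and follows essentially the same approach as the paper: apply Lemma~\ref{totally-geodesic-lemma} with $\mathfrak{t}'=\mathfrak{t}\cap\mathfrak{h}$, pass to the equivalent left-invariant space $(G'',F'')$, and invoke Theorem~5.1 of \cite{DH2013}. Your parity and dimension count to finish is fine, though a slightly more direct route (and what the paper's terse sketch likely intends) is to note that $\mathfrak{t}\cap\mathfrak{m}$ is a Cartan subalgebra of $\mathrm{Lie}(G'')=\mathfrak{g}'\cap\mathfrak{m}$, so $\mathrm{rk}\,G''=\mathrm{rk}\,\mathfrak{g}-\mathrm{rk}\,\mathfrak{h}$; since each of $\mathrm{U}(1)$, $\mathrm{SU}(2)$, $\mathrm{SO}(3)$ has rank~$1$, the conclusion is immediate once one knows (from odd dimension) that $G''$ is nontrivial.
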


\subsection{Some notation for  Lie algebras and root systems}

 We now set some notation for the relevant Lie algebras and root systems. Let $(G/H,F)$ be an odd dimensional positively curved homogeneous Finsler space with a bi-invariant orthogonal decomposition $\mathfrak{g}=\mathfrak{h}+\mathfrak{m}$ for the compact Lie algebra $\mathfrak{g}=\mathrm{Lie}(G)$.
The orthogonal projections to the $\mathfrak{h}$-factor
and $\mathfrak{m}$-factor are denoted as $\mathrm{pr}_\mathfrak{h}$
and $\mathrm{pr}_\mathfrak{m}$, respectively.

Fix a fundamental Cartan subalgebra $\mathfrak{t}$ of $\mathfrak{g}$ (i.e., $\mathfrak{t}\cap\mathfrak{h}$ is a Cartan subalgebra of $\mathfrak{h}$).
From now on,  root systems, root planes, etc, for $\mathfrak{g}$ will be taken with
respect to $\mathfrak{t}$, and those for $\mathfrak{h}$ will be taken  with respect to $\mathfrak{t}\cap\mathfrak{h}$. It is easy to see that $\mathfrak{t}$ is a splitting Cartan subalgebra, that is,
$$\mathfrak{t}=(\mathfrak{t}\cap\mathfrak{h})+(\mathfrak{t}\cap\mathfrak{m}).$$
By Corollary \ref{rank-equality-corollary}, we have
$\dim (\mathfrak{t}\cap\mathfrak{m})=1$.

The maximal torus of $G$ (resp. $H$) corresponding to $\mathfrak{t}$ (resp. $\mathfrak{t}\cap\mathfrak{h}$) will be denoted as $T$ (resp. $T_H$).
We now have the following decomposition of $\mathfrak{g}$ with respect to $\mathrm{Ad}(T)$-actions:
\begin{equation}
\mathfrak{g}=\mathfrak{t}+\sum_{\alpha\in\Delta_{\mathfrak{g}}}\mathfrak{g}_{\pm\alpha},
\end{equation}
 where $\Delta_\mathfrak{g}\subset\mathfrak{t}$ is the root system of $\mathfrak{g}$.
 For  $\alpha\in\Delta_\mathfrak{g}$,  $\mathfrak{g}_{\pm\alpha}$  is a two dimensional
irreducible representation of $\mathrm{Ad}(T)$-actions, called a {\it root plane}.
Through the  bi-invariant inner product, we will regard a root
as a vector in $\mathfrak{t}$ rather than a vector in $\mathfrak{t}^*$.
For the compact Lie algebra $\mathfrak{h}=\mathrm{Lie}(H)$, we have a similar
decomposition with respect to $\mathrm{Ad}(T_H)$-actions. On the other hand,  root planes of
$\mathfrak{h}$ are denoted as $\mathfrak{h}_{\pm\alpha'}$,  where $\alpha'
\in\mathfrak{t}\cap\mathfrak{h}$ are roots of $\mathfrak{h}$ in the root system $\Delta_\mathfrak{h}\subset\mathfrak{t}\cap\mathfrak{h}$.

There is another decomposition of $\mathfrak{g}$ with respect to the $\mathrm{Ad}(T_H)$-action, namely,
\begin{equation}\label{decomposition-compact-lie-alg-2}
\mathfrak{g}=\sum_{\alpha'\in\mathfrak{t}\cap\mathfrak{h}}
\hat{\mathfrak{g}}_{\pm\alpha'},
\end{equation}
where
\begin{equation*}
\hat{\mathfrak{g}}_{\pm\alpha'}=\sum_{\mathrm{pr}_\mathfrak{h}(\alpha)=\alpha'}
\mathfrak{g}_{\pm\alpha},\quad \mbox{if}\,\, \alpha'\neq 0,
\end{equation*}
 $\hat{\mathfrak{g}}_0=\mathfrak{t}+\mathfrak{g}_{\pm\alpha}$, if  there is a root $\alpha$ of $\mathfrak{g}$ contained in $\mathfrak{t}\cap\mathfrak{m}$,
 and $\hat{\mathfrak{g}}_0=\mathfrak{t}\cap\mathfrak{m}$ otherwise. This {\it $\mathrm{Ad}(T_H)$-invariant decomposition} is compatible with the bi-invariant orthogonal decomposition in the sense that
$$\hat{\mathfrak{g}}_{\pm\alpha'}=(\hat{\mathfrak{g}}_{\pm\alpha'}\cap\mathfrak{h})
+(\hat{\mathfrak{g}}_{\pm\alpha'}\cap\mathfrak{m}).$$
To be more precise, we have the following easy lemma, which will be repeatedly used in the sequel.
\begin{lemma} Let $\alpha'$ be a vector of $\mathfrak{t}\cap\mathfrak{h}$. Then we have the following:
\begin{description}
\item{\rm (1)}
if $\alpha'\in\Delta_\mathfrak{h}$, then we have
$\hat{\mathfrak{g}}_{\pm\alpha'}=(\hat{\mathfrak{g}}_{\pm\alpha'}
\cap\mathfrak{h})+
(\hat{\mathfrak{g}}_{\pm\alpha'}\cap\mathfrak{m}),$
where $\hat{\mathfrak{g}}_{\pm\alpha'}
\cap\mathfrak{h}=\mathfrak{h}_{\pm\alpha'}$;
\item{\rm (2)} if $\alpha'\notin\Delta_\mathfrak{h}$, then we have
$\mathfrak{g}_{\pm\alpha'}\subset\mathfrak{m}$. In particular, $\hat{\mathfrak{g}_0}\subset\mathfrak{m}$, and
$\mathfrak{g}_{\pm\alpha}\subset\mathfrak{m}$, if $\mathrm{pr}_\mathfrak{h}\alpha\notin\Delta_\mathfrak{h}$.
\end{description}
\end{lemma}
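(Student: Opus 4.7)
The proof plan is to unwind the definitions and exploit the observation that, because the chosen inner product on $\mathfrak{g}$ is bi-invariant, the complement $\mathfrak{m}$ is $\mathrm{Ad}(H)$-invariant, and in particular $\mathrm{Ad}(T_H)$-invariant. Thus both $\mathfrak{h}$ and $\mathfrak{m}$ split as sums of $\mathrm{Ad}(T_H)$-isotypic components, and this splitting is compatible with the decomposition (\ref{decomposition-compact-lie-alg-2}): for each $\alpha' \in \mathfrak{t}\cap\mathfrak{h}$,
$$\hat{\mathfrak{g}}_{\pm\alpha'} = (\hat{\mathfrak{g}}_{\pm\alpha'}\cap\mathfrak{h}) + (\hat{\mathfrak{g}}_{\pm\alpha'}\cap\mathfrak{m}).$$
The bulk of the work is to identify the two pieces on the right.

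For (1), I would start from the ordinary root space decomposition of $\mathfrak{h}$ with respect to its Cartan subalgebra $\mathfrak{t}\cap\mathfrak{h}$: this is exactly the decomposition of $\mathfrak{h}$ into $\mathrm{Ad}(T_H)$-isotypic components. The component of weight $\pm\alpha'$ (for $\alpha'\neq 0$) inside $\mathfrak{h}$ is, by definition, the root plane $\mathfrak{h}_{\pm\alpha'}$. Since $\hat{\mathfrak{g}}_{\pm\alpha'}$ is the $\pm\alpha'$-weight component of $\mathfrak{g}$, intersecting with $\mathfrak{h}$ picks out precisely the $\pm\alpha'$-weight component of $\mathfrak{h}$, giving $\hat{\mathfrak{g}}_{\pm\alpha'}\cap\mathfrak{h} = \mathfrak{h}_{\pm\alpha'}$. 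Combined with the first paragraph, this gives (1).

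For (2), the argument is the same: if $\alpha'\notin\Delta_\mathfrak{h}$ and $\alpha'\neq 0$, then $\mathfrak{h}$ contains no vector on which $\mathfrak{t}\cap\mathfrak{h}$ acts with weight $\pm\alpha'$, so $\hat{\mathfrak{g}}_{\pm\alpha'}\cap\mathfrak{h}=0$ and hence $\hat{\mathfrak{g}}_{\pm\alpha'}\subset\mathfrak{m}$. The two particular cases are immediate: taking $\alpha'=0$ (where no nonzero element of $\hat{\mathfrak{g}}_0$ lies in $\mathfrak{h}$, recalling that $\mathfrak{t}\cap\mathfrak{h}$ is excluded from $\hat{\mathfrak{g}}_0$ by construction) yields $\hat{\mathfrak{g}}_0\subset\mathfrak{m}$; and when $\mathrm{pr}_\mathfrak{h}\alpha\notin\Delta_\mathfrak{h}$, the root plane $\mathfrak{g}_{\pm\alpha}$ is a summand of $\hat{\mathfrak{g}}_{\pm\alpha'}$ with $\alpha'=\mathrm{pr}_\mathfrak{h}\alpha\notin\Delta_\mathfrak{h}$, so it too sits in $\mathfrak{m}$.

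The lemma is essentially a bookkeeping statement, and I do not anticipate a real obstacle; the only thing to be careful about is the $\alpha'=0$ case, because the definition of $\hat{\mathfrak{g}}_0$ in the paper must be read so that the $\mathfrak{t}\cap\mathfrak{h}$ part is removed, otherwise the inclusion $\hat{\mathfrak{g}}_0\subset\mathfrak{m}$ would fail. Once that reading is fixed, the decomposition follows purely from the $\mathrm{Ad}(T_H)$-equivariance of the orthogonal splitting $\mathfrak{g}=\mathfrak{h}+\mathfrak{m}$ together with the standard root space decomposition of $\mathfrak{h}$.
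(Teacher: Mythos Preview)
The paper states this lemma without proof, calling it an ``easy lemma''; your argument via the $\mathrm{Ad}(T_H)$-invariance of both $\mathfrak{h}$ and $\mathfrak{m}$ and the resulting compatibility of the isotypic decompositions is correct and is exactly the natural justification. Your caution about the $\alpha'=0$ case is also warranted: the paper's literal definition of $\hat{\mathfrak{g}}_0$ (with $\mathfrak{t}$ rather than $\mathfrak{t}\cap\mathfrak{m}$) appears to be a typo, and the intended reading---consistent with how $\hat{\mathfrak{g}}_0$ is used throughout, e.g.\ in Lemma~\ref{lemma-3-7} where $\dim\hat{\mathfrak{g}}_0\in\{1,3\}$---is the one you adopt.
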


For the bracket relation between root planes, we have the following
well known formula:
\begin{equation}
[\mathfrak{g}_{\pm\alpha},\mathfrak{g}_{\pm\beta}]\subseteq
\mathfrak{g}_{\pm(\alpha+\beta)}+\mathfrak{g}_{\pm(\alpha-\beta)},
\end{equation}
where $\mathfrak{g}_{\pm\alpha}$ and $\mathfrak{g}_{\pm\beta}$ are different
root planes, i.e., $\alpha\neq\pm\beta$, and each term of the right side can
be $0$ when the corresponding vector is not a root of $\mathfrak{g}$. In fact,
this is just a special case of the following
\begin{lemma}Keep all the above notation. We have
\begin{description}\label{trick-lemma-0}
\item{\rm (1)} For any root $\alpha$ of $\mathfrak{g}$, $[\mathfrak{g}_{\pm\alpha},
\mathfrak{g}_{\pm\alpha}]=\mathbb{R}\alpha$.
\item{\rm (2)} Let $\alpha$ and $\beta$ be two linearly independent roots  of $\mathfrak{g}$. If none of the roots $\alpha\pm\beta$ is a root of $\mathfrak{g}$, then
$[\mathfrak{g}_{\pm\alpha},\mathfrak{g}_{\pm\beta}]=0$; if one of $\alpha\pm\beta$, say $\gamma$,  is a root, and the other is not,  then $[\mathfrak{g}_{\pm\alpha},\mathfrak{g}_{\pm\beta}]=
\mathfrak{g}_{\gamma}$; If both $\alpha\pm\beta$ are  roots of $\mathfrak{g}$, then $[\mathfrak{g}_{\pm\alpha},\mathfrak{g}_{\pm\beta}]$ is a cone in $\mathfrak{g}_{\pm(\alpha+\beta)}+\mathfrak{g}_{\pm(\alpha-\beta)}$.
\item{\rm (3)} In the second case of (2),  for any nonzero vector $v\in\mathfrak{g}_{\pm\alpha}$, the linear map $\mathrm{ad}(v)$
 is an isomorphism from $\mathfrak{g}_{\pm\beta}$  onto $[\mathfrak{g}_{\pm\alpha},\mathfrak{g}_{\pm\beta}]=
 \mathfrak{g}_{\pm\gamma}$.
\end{description}
\end{lemma}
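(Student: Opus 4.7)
The plan is to pass to the complexification $\mathfrak{g}_{\mathbb{C}}=\mathfrak{g}\otimes_{\mathbb{R}}\mathbb{C}$, in which each root space $(\mathfrak{g}_{\mathbb{C}})_{\alpha}=\mathbb{C}X_{\alpha}$ is one-dimensional, and to rely on the standard relations $[X_{\alpha},X_{-\alpha}]=c_{\alpha}H_{\alpha}\neq 0$ and $[X_{\alpha},X_{\beta}]\in(\mathfrak{g}_{\mathbb{C}})_{\alpha+\beta}$ (nonzero iff $\alpha+\beta\in\Delta_{\mathfrak{g}}$). Normalizing the $X_{\alpha}$ so that the compact conjugation $\tau$ defining $\mathfrak{g}\subset\mathfrak{g}_{\mathbb{C}}$ satisfies $\tau(X_{\alpha})=-X_{-\alpha}$, the real root plane can be written as
\[
\mathfrak{g}_{\pm\alpha}=\{aX_{\alpha}-\bar{a}X_{-\alpha}:a\in\mathbb{C}\},
\]
and the crucial observation is that any nonzero element has \emph{both} complex coordinates nonzero.

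For (1), I would compute the bracket of two generic elements $aX_{\alpha}-\bar{a}X_{-\alpha}$ and $a'X_{\alpha}-\bar{a}'X_{-\alpha}$ of $\mathfrak{g}_{\pm\alpha}$; the result lies in $[\mathbb{C}X_{\alpha},\mathbb{C}X_{-\alpha}]\cap\mathfrak{g}=\mathbb{R}H_{\alpha}$, and after identifying $H_{\alpha}\in\mathfrak{t}$ with the root $\alpha$ via the bi-invariant inner product, this is $\mathbb{R}\alpha$. Nonvanishing follows from $c_{\alpha}\neq 0$ and the freedom to choose $a,a'$. For the first subclaim of (2), the bracket of elements of $\mathfrak{g}_{\pm\alpha}$ and $\mathfrak{g}_{\pm\beta}$ splits into components in $(\mathfrak{g}_{\mathbb{C}})_{\pm\alpha\pm\beta}$; if none of $\alpha\pm\beta$ is a root, every such component vanishes. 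The third subclaim of (2) is only the containment already displayed above the lemma, together with the remark that the bracket set is closed under real scalars, which is the meaning of ``cone'' here.

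The substantial content is (3), which also yields the equality in the second subclaim of (2). Assume, after relabeling, that $\gamma=\alpha+\beta\in\Delta_{\mathfrak{g}}$ while $\alpha-\beta\notin\Delta_{\mathfrak{g}}$. For a nonzero $v=aX_{\alpha}-\bar{a}X_{-\alpha}\in\mathfrak{g}_{\pm\alpha}$ and an arbitrary $w=cX_{\beta}-\bar{c}X_{-\beta}\in\mathfrak{g}_{\pm\beta}$, the brackets $[X_{\alpha},X_{-\beta}]$ and $[X_{-\alpha},X_{\beta}]$ vanish, leaving
\[
[v,w]=ac\,[X_{\alpha},X_{\beta}]+\overline{ac}\,[X_{-\alpha},X_{-\beta}]\in\mathfrak{g}_{\pm\gamma}.
\]
The two summands lie in distinct one-dimensional complex root spaces, so $[v,w]=0$ forces $ac=0$; since $a\neq 0$, we obtain $c=0$ and hence $w=0$. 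Thus $\mathrm{ad}(v)\colon\mathfrak{g}_{\pm\beta}\to\mathfrak{g}_{\pm\gamma}$ is injective and, by equality of real dimensions, an isomorphism.

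The main obstacle is purely bookkeeping: one must maintain a consistent Chevalley/compact normalization of the $X_{\alpha}$ across all roots so that the reality condition $b=-\bar{a}$ (hence ``one coordinate nonzero implies both nonzero'') holds uniformly. This automatic nonvanishing of both complex coordinates of a nonzero $v\in\mathfrak{g}_{\pm\alpha}$ is what drives the injectivity in (3) and, via (3), the sharpening from inclusion to equality in the middle case of (2). Once the normalization is in place, the proofs of (1), (2) and (3) are direct case analyses depending on which of $\alpha\pm\beta$ lie in $\Delta_{\mathfrak{g}}$.
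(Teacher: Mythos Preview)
The paper does not prove this lemma; it is stated immediately after the ``well known formula'' $[\mathfrak{g}_{\pm\alpha},\mathfrak{g}_{\pm\beta}]\subseteq\mathfrak{g}_{\pm(\alpha+\beta)}+\mathfrak{g}_{\pm(\alpha-\beta)}$ as a refinement thereof, and the text moves on without argument. Your proof via complexification and a Chevalley-type normalization is the standard route and is correct: the key point, that a nonzero element of the real root plane $\mathfrak{g}_{\pm\alpha}$ has both of its complex root-space components nonzero, is exactly what makes the injectivity argument for (3) work, and (3) in turn upgrades the containment to equality in the middle case of (2).
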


The root systems of compact simple Lie algebras $A_n$-$G_2$ and the presentations of root planes for the classical cases $A_n$-$D_n$ are
listed in the appendix (Section \ref{appendix-section}).

\subsection{The three Cases and the reversibility assumption}

Keep all the above assumptions and notation.
In \cite{XD2014-2}, we established the general theme for our classification of positively curved
normal homogeneous Finsler spaces. The main idea can be applied to this paper. In particular,  we only need to consider  the following three cases:
\begin{description}
\item{\bf Case I.} Each root plane of $\mathfrak{h}$ is also a root plane of $\mathfrak{g}$.
\item{\bf Case II.} There are roots $\alpha$ and $\beta$ of $\mathfrak{g}$ of $\mathfrak{g}$ from different simple factors, such that $\mathrm{pr}_\mathfrak{h}(\alpha)=\mathrm{pr}_\mathfrak{h}(\beta)=\alpha'$
    is a root of $\mathfrak{h}$.
\item{\bf Case III.} There exists a linearly independent pair of roots $\alpha$ and $\beta$ of $\mathfrak{g}$ from the same simple factor, such that $\mathrm{pr}_\mathfrak{h}(\alpha)=\mathrm{pr}_\mathfrak{h}(\beta)=\alpha'$
    is a root of $\mathfrak{h}$.
\end{description}

In the following sections, we will restrict our discussion to
reversible Finsler metrics (i.e. $F(x,y)=F(x,-y)$ for any $y\in T_x (G/H)$). The reason for adding this condition for $F$ will be explained in the next
subsection.

It turns out that with the reversibility assumption for $F$, Case II is the easiest. Case III contains a lot of case-by-case discussions. But in this case we can use the root $\alpha'$ of
$\mathfrak{h}$ to settle the problem.
Case I turns out to be very difficult, and we can only get some partial result for this case.

Adding the reversibility assumption will not lose too much  generality, and it provides an alternative certification that the  classification result in \cite{BB76} is correct.

\subsection{The key lemmas for reversible metrics}
\label{subsection-key-lemmas}

From now on, we will assume that $(G/H,F)$ is an odd dimensional positively curved reversible homogeneous Finsler space, with a bi-invariant orthogonal decomposition
$\mathfrak{g}=\mathfrak{h}+\mathfrak{m}$ for the compact Lie algebra $\mathfrak{g}=\mathrm{Lie}(G)$, and a fundamental Cartan subalgebra $\mathfrak{t}$.
Keep all the relevant notation as before.

In the following three lemmas we will present some results on
the $g_u^F$-orthogonal (i.e. with respect to the inner product $\langle\cdot,\cdot\rangle_u^F$) decomposition of $\mathfrak{m}$. These lemmas are crucial for our later discussions.

\begin{lemma} \label{lemma-3-7}
Keep the above assumptions and notation.
\begin{description}
\item{\rm (1)}\quad Let $u$ be a nonzero
vector in $\hat{\mathfrak{g}}_{0}\subset\mathfrak{m}$. Then $\mathfrak{m}$ has a $g_u^F$-orthogonal decomposition as the sum of all
$\hat{\mathfrak{m}}_{\pm\alpha'}=\hat{\mathfrak{g}}_{\pm\alpha'}\cap
\mathfrak{m}$, $\alpha'\in\mathfrak{t}\cap\mathfrak{h}$. In particular, $\hat{\mathfrak{m}}_0=\hat{\mathfrak{g}}_0$.
\item{\rm (2)}\quad If $\dim\hat{\mathfrak{g}}_{0}=3$, then there is a  fundamental Cartan subalgebra $\mathfrak{t}$, such that for
    any nonzero vector $u\in\mathfrak{t}\cap\mathfrak{m}$, we have $\langle\mathfrak{t}\cap\mathfrak{m},\mathfrak{g}_{\pm\alpha}\rangle_u^F=0$, where $\alpha$ is the root in $\mathfrak{t}\cap\mathfrak{m}$.
\end{description}
\end{lemma}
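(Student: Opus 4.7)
For part (1), the plan is to exploit that every $u\in\hat{\mathfrak{g}}_0$ is fixed by $\mathrm{Ad}(T_H)$: since $F$ is $H$-invariant and hence $T_H$-invariant, the fundamental tensor $\langle\cdot,\cdot\rangle_u^F$ is $\mathrm{Ad}(T_H)$-invariant. Intersecting the decomposition (\ref{decomposition-compact-lie-alg-2}) with $\mathfrak{m}$ yields $\mathfrak{m}=\sum_{\alpha'}\hat{\mathfrak{m}}_{\pm\alpha'}$, and I would argue this is precisely the isotypic decomposition of $\mathfrak{m}$ as a real $\mathrm{Ad}(T_H)$-representation: summands indexed by distinct $\pm\alpha'$ carry $T_H$-representations of distinct weights and are therefore inequivalent. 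A standard Schur-type orthogonality argument then forces any $\mathrm{Ad}(T_H)$-invariant symmetric bilinear form, in particular $\langle\cdot,\cdot\rangle_u^F$, to pair these isotypic summands trivially, giving the claimed $g_u^F$-orthogonal decomposition. The identity $\hat{\mathfrak{m}}_0=\hat{\mathfrak{g}}_0$ is then immediate from the hypothesis $\hat{\mathfrak{g}}_0\subset\mathfrak{m}$.

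For part (2), since $\dim\hat{\mathfrak{g}}_0=3$ and $\mathfrak{t}\cap\mathfrak{m}\subset\hat{\mathfrak{g}}_0$ is one-dimensional by Corollary \ref{rank-equality-corollary}, there must be a unique pair of roots $\pm\alpha$ of $\mathfrak{g}$ lying in $\mathfrak{t}\cap\mathfrak{m}$, giving $\hat{\mathfrak{g}}_0=(\mathfrak{t}\cap\mathfrak{m})+\mathfrak{g}_{\pm\alpha}$; the bracket relations make $\hat{\mathfrak{g}}_0$ a compact simple $3$-dimensional Lie algebra isomorphic to $\mathfrak{su}(2)$. The plan is to exhibit a fundamental Cartan $\mathfrak{t}'$ with $\mathfrak{t}'\cap\mathfrak{h}=\mathfrak{t}\cap\mathfrak{h}$ by replacing $\mathfrak{t}\cap\mathfrak{m}$ with another line $\mathbb{R}u\subset\hat{\mathfrak{g}}_0$: the subspace $(\mathfrak{t}\cap\mathfrak{h})+\mathbb{R}u$ is automatically abelian (since $\hat{\mathfrak{g}}_0$ commutes with $\mathfrak{t}\cap\mathfrak{h}$) and of maximal dimension, hence a Cartan subalgebra, and it remains fundamental. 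For this new Cartan the new root plane inside $\hat{\mathfrak{g}}_0$ is precisely the bi-invariant orthogonal complement $W_u$ of $\mathbb{R}u$ in $\hat{\mathfrak{g}}_0$, because in the $\mathfrak{su}(2)$-subalgebra with Cartan $\mathbb{R}u$ the root planes coincide with the non-trivial $\mathrm{ad}(u)$-eigenspaces, which in turn coincide with the bi-invariant complement. The assertion therefore reduces to producing a nonzero $u\in\hat{\mathfrak{g}}_0$ satisfying $\langle u,W_u\rangle_u^F=0$.

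To locate such a $u$, I would introduce a continuous tangent vector field $X$ on the bi-invariant unit sphere $S^2\subset\hat{\mathfrak{g}}_0$ by requiring $\langle X(u),w\rangle=\langle u,w\rangle_u^F$ for every $w\in T_uS^2=W_u$, with $\langle\cdot,\cdot\rangle$ the bi-invariant inner product. Reversibility of $F$ gives $\langle\cdot,\cdot\rangle_{-u}^F=\langle\cdot,\cdot\rangle_u^F$, and the $2$-homogeneity of $F^2$ gives $\langle\cdot,\cdot\rangle_{\lambda u}^F=\langle\cdot,\cdot\rangle_u^F$ for every $\lambda>0$, so $W_u$ and $X(u)$ depend only on the line through $u$ and the field $X$ is unambiguously defined on $S^2$. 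By the hairy ball theorem (equivalently, Poincar\'e--Hopf since $\chi(S^2)=2\neq0$), $X$ must vanish at some $u_0\in S^2$; setting $\mathfrak{t}\cap\mathfrak{m}:=\mathbb{R}u_0$ together with the unchanged $\mathfrak{t}\cap\mathfrak{h}$ yields the desired fundamental Cartan, and the orthogonality propagates to every nonzero scalar multiple of $u_0$ via the same homogeneity and reversibility invariances. The principal obstacle I anticipate is precisely this reduction to a purely topological statement on $S^2$: one must check carefully that the new Cartan is genuinely fundamental and that $W_u$ really is the new root plane, after which the existence of $u_0$ is automatic, and reversibility is exactly the ingredient that makes it legitimate to pass to the projective sphere of directions.
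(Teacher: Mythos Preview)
Your argument for part (1) is essentially identical to the paper's: both use that $u\in\hat{\mathfrak{g}}_0$ is $\mathrm{Ad}(T_H)$-fixed, so $\langle\cdot,\cdot\rangle_u^F$ is $\mathrm{Ad}(T_H)$-invariant, and the summands $\hat{\mathfrak{m}}_{\pm\alpha'}$ are pairwise inequivalent $T_H$-representations, hence mutually orthogonal.

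For part (2) your approach is correct but genuinely different from the paper's. The paper uses a variational argument: it picks an $F$-unit vector $u\in\hat{\mathfrak{g}}_0$ at which the bi-invariant norm $\|u\|_{\mathrm{bi}}$ is maximal; Lagrange multipliers then force the bi-invariant orthogonal complement $u^\perp\cap\hat{\mathfrak{g}}_0$ to coincide with the $g_u^F$-orthogonal complement of $u$, which is exactly the desired statement once one takes $\mathfrak{t}_0=(\mathfrak{t}\cap\mathfrak{h})+\mathbb{R}u$. Your route via the hairy ball theorem on the bi-invariant $S^2\subset\hat{\mathfrak{g}}_0$ also works and is a nice alternative, but note two points of comparison. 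First, the paper's maximizer argument is more elementary and, as the paper later remarks, does not require reversibility at all; your invocation of reversibility is not needed to make $X$ well-defined on $S^2$ (it already is, by smoothness of $F$ away from $0$), but only to propagate the conclusion from $u_0$ to $-u_0$ so as to match the phrase ``any nonzero vector $u\in\mathfrak{t}\cap\mathfrak{m}$''. Second, the variational argument is dimension-independent, whereas your topological argument is specific to $\dim\hat{\mathfrak{g}}_0=3$ via the even-dimensional sphere; that is harmless here since the hypothesis is precisely $\dim\hat{\mathfrak{g}}_0=3$, but it is worth being aware of the distinction.
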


\begin{proof} (1) Let $T_H$ be the torus in $H$ with $\mathrm{Lie}(T_H)=\mathfrak{t}\cap\mathfrak{h}$.
Since both $F$ and $u\in\hat{\mathfrak{g}}_0$ are $\mathrm{Ad}(T_H)$-invariant, the inner product
$\langle\cdot,\cdot\rangle_u^F$ is also $\mathrm{Ad}(T_H)$-invariant.
The summands  given in the decomposition correspond to different irreducible
representations of $T_H$, thus it is a $g_u^F$-orthogonal decomposition.

(2) Choose the $F$-unit vector $u\in\hat{\mathfrak{g}}_0$ such that
$||u||_{\mathrm{bi}}$ reaches the maximum among all $F$-unit vectors in
$\hat{\mathfrak{g}}_0$.
Then $\mathfrak{t}_0=\mathfrak{t}\cap\mathfrak{h}+\mathbb{R}u$ is also a fundamental Cartan subalgebra of $\mathfrak{g}$. Notice that for  $\alpha'\in\mathfrak{t}\cap\mathfrak{h}$, the subspace $\hat{\mathfrak{g}}_{\pm\alpha'}$  does not change when $\mathfrak{t}$ is replaced  with $\mathfrak{t}_0$. The bi-invariant orthogonal
complement $u^\perp\cap\hat{\mathfrak{g}}_0$ of $u$ in $\hat{\mathfrak{g}}_0$ is a root plane $\mathfrak{g}_{\pm\alpha}$ for $\mathfrak{t}_0$. Then our assumption on $u$ implies that
$$\langle\mathfrak{t}_0\cap\mathfrak{m},\mathfrak{g}_{\pm\alpha}\rangle_u^F
=\langle \mathbb{R}u,u^\perp\cap\mathfrak{g}_0\rangle_u^F=0.$$
This completes the proof of the lemma.\end{proof}

\begin{lemma}\label{lemma-3-8}
Keep the above assumptions and notation. Let  $u\in\mathfrak{m}$ be a  nonzero vector in
a root plane $\hat{\mathfrak{m}}_{\pm\alpha'}$ with $\alpha'\neq 0$.
Denote the bi-invariant orthogonal complement of $\alpha$ in $\mathfrak{t}\cap\mathfrak{h}$ as $\mathfrak{t}'$, and the bi-invariant orthogonal projection to $\mathfrak{t}'$ as $\mathrm{pr}_{\mathfrak{t}'}$.
Then $\mathfrak{m}$ can be $g_u^F$-orthogonally decomposed as the sum of
\begin{eqnarray*}
\hat{\hat{\mathfrak{m}}}_{\pm\gamma''}&=&
(\sum_{\mathrm{pr}_{\mathfrak{t}'}(\gamma)=\gamma''}\mathfrak{g}_{\pm\gamma})\
\cap\mathfrak{m}
=\sum_{\mathrm{pr}_{\mathfrak{t}'}(\gamma')=\gamma''}
(\hat{\mathfrak{g}}_{\pm\gamma'}\cap\mathfrak{m})\\
&=&(\sum_{\gamma\in\tau+\mathbb{R}\alpha+\mathfrak{t}\cap\mathfrak{m}}
\mathfrak{g}_{\pm\gamma})\cap\mathfrak{m},
\end{eqnarray*}
where $\tau$ is a root of $\mathfrak{g}$ with $\mathrm{pr}_{\mathfrak{t}'}(\tau)=\gamma''$.
In particular, $\hat{\hat{\mathfrak{m}}}_0=
(\mathop\sum\limits_{\gamma\in\mathbb{R}\alpha+\mathfrak{t}\cap\mathfrak{m}}
\mathfrak{g}_{\pm\gamma})\cap\mathfrak{m}$.
\end{lemma}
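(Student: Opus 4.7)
The plan is to exploit a subtorus of $T_H$ which stabilizes $u$, and then invoke the orthogonality of distinct isotypic components under invariant inner products. Let $T'\subset T_H$ be the connected subgroup with Lie algebra $\mathfrak{t}'$. Because $\mathfrak{t}'$ is the bi-invariant orthogonal complement of $\alpha'$ in $\mathfrak{t}\cap\mathfrak{h}$, and the $\mathrm{ad}(h)$-action on a root plane $\mathfrak{g}_{\pm\gamma}$ of $\mathfrak{g}$ for $h\in\mathfrak{t}\cap\mathfrak{h}$ is rotation by angle $\langle h,\mathrm{pr}_\mathfrak{h}(\gamma)\rangle$, every $h\in\mathfrak{t}'$ annihilates each root plane $\mathfrak{g}_{\pm\gamma}$ with $\mathrm{pr}_\mathfrak{h}(\gamma)=\alpha'$, and hence the full packet $\hat{\mathfrak{g}}_{\pm\alpha'}$. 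In particular $u\in\hat{\mathfrak{m}}_{\pm\alpha'}$ is centralized by $\mathfrak{t}'$, so $\mathrm{Ad}(T')u=u$, and since $F$ is $\mathrm{Ad}(H)$-invariant it follows that $\langle\cdot,\cdot\rangle_u^F$ is $\mathrm{Ad}(T')$-invariant on $\mathfrak{m}$.

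The next step is to identify the alleged summands $\hat{\hat{\mathfrak{m}}}_{\pm\gamma''}$ with the $\mathrm{Ad}(T')$-isotypic blocks of $\mathfrak{m}$. For $h\in\mathfrak{t}'$ the operator $\mathrm{ad}(h)$ acts on each root plane $\mathfrak{g}_{\pm\gamma}$ as rotation by angle $\langle h,\mathrm{pr}_{\mathfrak{t}'}(\gamma)\rangle$, so grouping root planes that share the same $\mathrm{pr}_{\mathfrak{t}'}$-image yields a single real isotypic block of $\mathfrak{g}$. Intersecting with $\mathfrak{m}$ is compatible with this grouping because $\mathfrak{h}$ and $\mathfrak{m}$ are themselves $\mathrm{Ad}(T')$-invariant, which produces the decomposition $\mathfrak{m}=\sum_{\gamma''}\hat{\hat{\mathfrak{m}}}_{\pm\gamma''}$. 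The three equivalent descriptions in the statement are unravellings: the equality with $\sum_{\mathrm{pr}_{\mathfrak{t}'}(\gamma')=\gamma''}(\hat{\mathfrak{g}}_{\pm\gamma'}\cap\mathfrak{m})$ uses $\mathrm{pr}_{\mathfrak{t}'}=\mathrm{pr}_{\mathfrak{t}'}\circ\mathrm{pr}_\mathfrak{h}$ on $\mathfrak{t}$ together with the definition of $\hat{\mathfrak{g}}_{\pm\gamma'}$, and the affine form $\gamma\in\tau+\mathbb{R}\alpha'+(\mathfrak{t}\cap\mathfrak{m})$ uses the splitting $\mathfrak{t}=\mathfrak{t}'+\mathbb{R}\alpha'+(\mathfrak{t}\cap\mathfrak{m})$.

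Finally, $g_u^F$-orthogonality of the summands is the standard representation-theoretic fact that inequivalent real irreducible isotypic components of a compact group pair trivially under any invariant symmetric bilinear form; the need to index by unordered pairs $\pm\gamma''$ reflects the equivalence of the characters $\gamma''$ and $-\gamma''$ as real two-dimensional $T'$-representations, and lumping them together preserves rather than destroys the orthogonality. The main obstacle, in my view, is not the orthogonality step but the initial observation that although $\hat{\mathfrak{g}}_{\pm\alpha'}$ is in general a sum of several distinct root planes of $\mathfrak{g}$, the $\mathrm{ad}(\mathfrak{t}\cap\mathfrak{h})$-action on it factors through the single character $\alpha'$ and hence is killed by $\mathfrak{t}'$; once this is granted, the remainder of the proof is routine Lie-theoretic bookkeeping.
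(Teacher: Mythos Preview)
Your proof is correct and follows essentially the same approach as the paper: introduce the subtorus $T'$ with $\mathrm{Lie}(T')=\mathfrak{t}'$, observe that both $F$ and $u$ are $\mathrm{Ad}(T')$-invariant so that $\langle\cdot,\cdot\rangle_u^F$ is $\mathrm{Ad}(T')$-invariant, and then recognize the summands $\hat{\hat{\mathfrak{m}}}_{\pm\gamma''}$ as the distinct $T'$-isotypic components of $\mathfrak{m}$. Your write-up is considerably more detailed than the paper's (which dispatches the lemma in three sentences), in particular spelling out why $\mathfrak{t}'$ centralizes $\hat{\mathfrak{g}}_{\pm\alpha'}$ and verifying the equivalent descriptions of the summands, but the underlying argument is the same.
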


\begin{proof} Let $T'$ be the torus in $H$ with $\mathrm{Lie}(T')=\mathfrak{t}'$.
Since both $F$ and $u$ are $\mathrm{Ad}(T')$-invariant,  the inner product
$\langle\cdot,\cdot\rangle_u^F$ on $\mathfrak{m}$ is also $\mathrm{Ad}(T')$-invariant. The summands   given in the  decomposition correspond to different irreducible representations of $T'$, thus it is an orthogonal
decomposition with respect to $\langle\cdot,\cdot\rangle_u^F$.
\end{proof}

The following lemma does not hold in general without the reversibility assumption.

\begin{lemma} \label{lemma-3-6}
Keep the above assumptions and notation.
Then for any nonzero vector
$u\in\hat{\mathfrak{m}}_{\pm\alpha'}=
\hat{\mathfrak{g}}_{\pm\alpha'}\cap\mathfrak{m}$ with $\alpha'\neq 0$, and any $\beta'\in\mathfrak{t}\cap\mathfrak{h}$ which is not an even multiple of $\alpha'$,  we have
$$\langle\hat{\mathfrak{m}}_{\pm\beta'},\hat{\mathfrak{g}}_0
\rangle_u^F=0.$$
In particular, we have
$$\langle \hat{\mathfrak{m}}_{\pm\alpha'},
\hat{\mathfrak{g}}_{0}\rangle_u^F=0.$$
\end{lemma}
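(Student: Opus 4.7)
The plan is to exploit reversibility to convert a carefully chosen adjoint action into an orthogonal transformation of $(\mathfrak{m},\langle\cdot,\cdot\rangle_u^F)$. For any $X\in\mathfrak{t}\cap\mathfrak{h}$, the map $\phi:=\mathrm{Ad}(\exp X)$ is an isometry of $F$ on $\mathfrak{m}$, so it satisfies the standard identity $\langle\phi v,\phi w\rangle_{\phi u}^F=\langle v,w\rangle_u^F$ for all $v,w\in\mathfrak{m}$. The key observation is that if we can arrange $\phi u=-u$, then reversibility gives $\langle\cdot,\cdot\rangle_{-u}^F=\langle\cdot,\cdot\rangle_u^F$, upgrading the identity to
\begin{equation*}
\langle\phi v,\phi w\rangle_u^F=\langle v,w\rangle_u^F\quad\text{for all }v,w\in\mathfrak{m},
\end{equation*}
so $\phi$ becomes a genuine $g_u^F$-orthogonal transformation of $\mathfrak{m}$. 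This is precisely where the reversibility assumption of the lemma enters.

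The main task is then to choose $X\in\mathfrak{t}\cap\mathfrak{h}$ so that $\phi$ satisfies: (i) $\phi u=-u$; (ii) $\phi$ acts as the identity on $\hat{\mathfrak{g}}_0$; and (iii) $\phi-\mathrm{Id}$ is invertible on $\hat{\mathfrak{m}}_{\pm\beta'}$. On each root plane $\mathfrak{g}_{\pm\gamma}$ of $\mathfrak{g}$, $\phi$ acts as a real rotation by angle $\gamma(X)=\langle\mathrm{pr}_\mathfrak{h}\gamma,X\rangle_{\mathrm{bi}}$, using that $X$ is bi-invariant orthogonal to $\mathfrak{t}\cap\mathfrak{m}$. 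Hence (i) becomes $\langle\alpha',X\rangle_{\mathrm{bi}}\equiv\pi\pmod{2\pi}$ (rotation by $\pi$ on every root plane inside $\hat{\mathfrak{g}}_{\pm\alpha'}$, so $\phi=-\mathrm{Id}$ there and thus on $u$), (iii) becomes $\langle\beta',X\rangle_{\mathrm{bi}}\not\equiv 0\pmod{2\pi}$, and (ii) is automatic since $\phi$ fixes $\mathfrak{t}$ pointwise and the rotation angle $\alpha_0(X)=0$ on any root plane $\mathfrak{g}_{\pm\alpha_0}$ with $\alpha_0\in\mathfrak{t}\cap\mathfrak{m}$. The hypothesis that $\beta'$ is not an even multiple of $\alpha'$ is exactly what is needed to solve (i) and (iii) simultaneously.

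With such $X$ in hand, the finish is one line: for any $v\in\hat{\mathfrak{m}}_{\pm\beta'}$ and $w\in\hat{\mathfrak{g}}_0$, combining the orthogonality identity of the first paragraph with $\phi w=w$ gives $\langle(\phi-\mathrm{Id})v,w\rangle_u^F=0$, and since $\phi-\mathrm{Id}$ preserves $\hat{\mathfrak{m}}_{\pm\beta'}$ and is invertible there by (iii), the image sweeps out the whole subspace and the claimed $\langle\hat{\mathfrak{m}}_{\pm\beta'},\hat{\mathfrak{g}}_0\rangle_u^F=0$ follows; the special case $\beta'=\alpha'$ then yields the ``In particular'' clause, as $\alpha'$ is trivially not an even integer multiple of itself. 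The step I expect to be the most delicate is verifying that (i) and (iii) can actually be solved together: when $\alpha'$ and $\beta'$ are linearly independent there is ample freedom in a two-dimensional slice of $\mathfrak{t}\cap\mathfrak{h}$, but when $\beta'=c\alpha'$ with $c\notin 2\mathbb{Z}$ one must choose the odd integer $k=\langle\alpha',X\rangle_{\mathrm{bi}}/\pi$ so that $ck\notin 2\mathbb{Z}$, requiring a short case analysis on whether $c$ is irrational, a non-integer rational, or an odd integer. That analysis also exhibits why the excluded values $c\in 2\mathbb{Z}$ are precisely the obstructions, confirming that the hypothesis of the lemma is sharp for this approach.
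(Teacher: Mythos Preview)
Your proof is correct and follows essentially the same approach as the paper: pick $g\in T_H$ so that $\mathrm{Ad}(g)$ sends $u$ to $-u$, fixes $\hat{\mathfrak{g}}_0$ pointwise, and rotates $\hat{\mathfrak{m}}_{\pm\beta'}$ nontrivially, then use reversibility to make $\mathrm{Ad}(g)$ a $g_u^F$-isometry. The only differences are cosmetic: the paper finishes by iterating $\langle w_1,w_2\rangle_u^F=\langle R(\theta)w_1,w_2\rangle_u^F$ and taking a Ces\`aro average instead of your direct $(\phi-\mathrm{Id})$-invertibility argument, and it asserts the existence of such $g$ without the case analysis on $\beta'=c\alpha'$ that you supply.
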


\begin{proof} Without losing generality, we can assume that  $\hat{\mathfrak{m}}_{\pm\beta'}\ne 0$. Then $\dim\hat{\mathfrak{m}}_{\pm\beta'}=2k>0$ is even. Hence  there exists
 an element $g$ in the maximal torus $T_H$ of $H$, and a bi-invariant orthonormal basis $\{u_1,v_1,u_2,v_2,\ldots,u_k,v_k\}$ of $\hat{\mathfrak{g}}_{\pm\beta'}\cap\mathfrak{m}$ such
that
$\mathrm{Ad}(g)|_{\hat{\mathfrak{g}}_{\pm\alpha'}}=-\mathrm{Id}$,
$\mathrm{Ad}(g)|_{\hat{\mathfrak{g}}_{0}\cap\mathfrak{m}}=\mathrm{Id}$,
and for each $i$, $\mathrm{Ad}(g)|_{\mathbb{R}u_i+\mathbb{R}v_i}$ is the anticlockwise rotation $R(\theta)$ with  angle $\theta\in (0,2\pi)$.

Since $F$ is $\mathrm{Ad}(g)$-invariant, for any
$w_1\in\mathbb{R}u_i+\mathbb{R}v_i$ and
$w_2\in\in\hat{\mathfrak{g}}_{0}\cap\mathfrak{m}$, we have
$$\langle w_1,w_2\rangle_u^F=
\langle\mathrm{Ad}(g)w_1,\mathrm{Ad}(g)w_2\rangle_{\mathrm{Ad}(g)u}^F
=\langle R(\theta)w_1,w_2\rangle_{-u}^F=\langle R(\theta)w_1,w_2\rangle_u^F.$$
Repeating this procedure, we get
$\langle w_1,w_2\rangle_u^F=\langle R(n\theta)w_1,w_2\rangle_u^F$ for each $n\in\mathbb{N}$. So
$$\langle w_1,w_2\rangle_u^F=\lim_{n\rightarrow\infty}
\langle\frac1n(R(\theta)w_1+\cdot+R(n\theta)w_1),w_2\rangle_u^F=0.$$
Now the above argument holds for any  $i$ between $1$ to $k$. This   proves the lemma.
\end{proof}

The following two lemmas will be repeatedly used  in our later discussion.

\begin{lemma}\label{key-lemma-1} Let $F$ be a positively curved homogeneous Finsler metric on the odd dimensional coset space $G/H$. Keep all the relevant notation as before.
If $\alpha$ is a root of $\mathfrak{g}$
contained in $\mathfrak{t}\cap\mathfrak{h}$, and it is the only root
of $\mathfrak{g}$ contained in $\alpha+(\mathfrak{t}\cap\mathfrak{m})$,
then it is a root of $\mathfrak{h}$ and we have
$\mathfrak{h}_{\pm\alpha}=\hat{\mathfrak{g}}_{\pm\alpha}
=\mathfrak{g}_{\pm\alpha}$.
\end{lemma}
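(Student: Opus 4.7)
The plan is to split on whether the $2$-plane $\mathfrak{g}_{\pm\alpha}$ lies in $\mathfrak{h}$ or in $\mathfrak{m}$. The uniqueness hypothesis immediately gives $\hat{\mathfrak{g}}_{\pm\alpha}=\mathfrak{g}_{\pm\alpha}$. This plane is an irreducible $T_H$-representation (nontrivial weight $\alpha$), yet by the compatibility of the $\mathrm{Ad}(T_H)$- and $\mathrm{Ad}(H)$-decompositions stated just before Lemma~\ref{trick-lemma-0}, it decomposes as $\mathfrak{g}_{\pm\alpha}=(\mathfrak{g}_{\pm\alpha}\cap\mathfrak{h})+(\mathfrak{g}_{\pm\alpha}\cap\mathfrak{m})$, so irreducibility forces exactly one summand to vanish. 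If $\mathfrak{g}_{\pm\alpha}\subset\mathfrak{h}$, the same easy lemma (part (1)) gives $\mathfrak{h}_{\pm\alpha}=\mathfrak{g}_{\pm\alpha}$ and we are done. The substance of the lemma is therefore to exclude $\mathfrak{g}_{\pm\alpha}\subset\mathfrak{m}$ by exhibiting a zero-curvature flag, contradicting positivity.

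Assume $\mathfrak{g}_{\pm\alpha}\subset\mathfrak{m}$. By Corollary~\ref{rank-equality-corollary}, $\dim(\mathfrak{t}\cap\mathfrak{m})=1$; let $x$ generate it and $v\in\mathfrak{g}_{\pm\alpha}$ be nonzero. Since $\alpha\in\mathfrak{t}\cap\mathfrak{h}$ is bi-invariantly orthogonal to $x\in\mathfrak{m}$, $\alpha(x)=0$ and hence $[x,v]=0$. I will apply Theorem~\ref{flag-curvature-formula-thm} to the commutative pair $(u,v)$, where $u$ generates $\mathfrak{t}_0\cap\mathfrak{m}$ for a suitable fundamental Cartan $\mathfrak{t}_0$, and establish $\eta(u)=0$ and $U(u,v)=0$, whence $K^F(o,u,u\wedge v)=0$. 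When $\dim\hat{\mathfrak{g}}_0=1$ I take $\mathfrak{t}_0=\mathfrak{t}$ and $u=x$; when $\dim\hat{\mathfrak{g}}_0=3$, a root $\beta_0\in\mathfrak{t}\cap\mathfrak{m}$ appears with $\mathfrak{g}_{\pm\beta_0}\subset\mathfrak{m}$, and I invoke Lemma~\ref{lemma-3-7}(2) to select $\mathfrak{t}_0=(\mathfrak{t}\cap\mathfrak{h})+\mathbb{R}u$ so that the bi-orthogonal complement of $u$ in $\hat{\mathfrak{g}}_0$ is $g_u^F$-orthogonal to $u$. The uniqueness hypothesis forbids $\alpha\pm\beta_0$ from being roots of $\mathfrak{g}$, hence $[\hat{\mathfrak{g}}_0,\mathfrak{g}_{\pm\alpha}]=0$; this single identity guarantees both $[u,v]=0$ and that $\alpha$ remains a root of $\mathfrak{g}$ with unchanged root plane $\mathfrak{g}_{\pm\alpha}=\hat{\mathfrak{g}}_{\pm\alpha}$ under the new Cartan $\mathfrak{t}_0$.

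The two required vanishings are then verified as follows. For $\eta(u)=0$, components of $w\in\mathfrak{m}$ in nontrivial $T_H$-weights contribute nothing by $T_H$-invariance of $g_u^F$ (since $u$ is $T_H$-fixed); for $w\in\hat{\mathfrak{m}}_0=\hat{\mathfrak{g}}_0$, I decompose $[w,u]\in\hat{\mathfrak{g}}_0$ bi-invariantly into its $\mathbb{R}u$-component and its complement---the complement is $g_u^F$-orthogonal to $u$ by Lemma~\ref{lemma-3-7}(2), while the $\mathbb{R}u$-component vanishes because $\langle[w,u],u\rangle_{\mathrm{bi}}=0$ by ad-invariance of the bi-invariant inner product. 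For $U(u,v)=0$, $T_H$-weight matching forces only $w\in\hat{\mathfrak{m}}_{\pm\alpha}=\mathfrak{g}_{\pm\alpha}$ to be relevant in either defining term of $U$; for such $w$ the identity $[\hat{\mathfrak{g}}_0,\mathfrak{g}_{\pm\alpha}]=0$ gives $[w,u]=0$, and $[w,v]\in[\mathfrak{g}_{\pm\alpha},\mathfrak{g}_{\pm\alpha}]=\mathbb{R}\alpha\subset\mathfrak{h}$ gives $[w,v]_\mathfrak{m}=0$. Both terms of $U$ vanish, producing the sought contradiction.

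The main obstacle is the Cartan change in the case $\dim\hat{\mathfrak{g}}_0=3$: one must confirm that invoking Lemma~\ref{lemma-3-7}(2) preserves the hypothesis on $\alpha$, and the entire argument reduces to the single bracket identity $[\hat{\mathfrak{g}}_0,\mathfrak{g}_{\pm\alpha}]=0$, which the uniqueness hypothesis $\alpha\pm\beta_0\notin\Delta_\mathfrak{g}$ provides directly.
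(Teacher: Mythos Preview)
Your proof is correct and follows essentially the same approach as the paper's: assume $\mathfrak{g}_{\pm\alpha}\subset\mathfrak{m}$, invoke Lemma~\ref{lemma-3-7}(2) to select a good $u\in\hat{\mathfrak{g}}_0$, and then verify $\eta(u)=0$ and $U(u,v)=0$ via the $g_u^F$-orthogonality of the $T_H$-weight decomposition (Lemma~\ref{lemma-3-7}(1)) together with the bracket identity $[\hat{\mathfrak{g}}_0,\mathfrak{g}_{\pm\alpha}]=0$. The only difference is packaging: the paper computes the containments $[u,\mathfrak{m}]_\mathfrak{m}\subset(u^\perp\cap\hat{\mathfrak{g}}_0)+\sum_{\gamma'\neq\alpha}\hat{\mathfrak{g}}_{\pm\gamma'}$ and $[v,\mathfrak{m}]_\mathfrak{m}\subset\sum_{\gamma'\neq 0}\hat{\mathfrak{g}}_{\pm\gamma'}$ and then applies orthogonality, whereas you argue component-by-component in $w$ and make the key bracket identity explicit---the latter is arguably cleaner, but the underlying argument is identical.
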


\begin{proof} We only need to prove that $\alpha$ is a root of $\mathfrak{h}$.
The other statement follows easily.

Assume conversely that $\alpha$ is not a root
of $\mathfrak{h}$. Then
$\mathfrak{g}_{\pm\alpha}=\hat{\mathfrak{g}}_{\pm\alpha}$ is contained in
$\mathfrak{m}$. By (2) of Lemma \ref{lemma-3-7}, if $\dim\hat{\mathfrak{g}}_0=3$, then there exists a  fundamental Cartan subalgebra $\mathfrak{t}$ and a nonzero $u$ in $\mathfrak{t}\cap\mathfrak{m}$, such that
\begin{equation}\label{2100}
\langle u^\perp\cap\hat{\mathfrak{g}}_0,u\rangle_u^F=0,
\end{equation}
 where $u^\perp\cap\hat{\mathfrak{g}}_0$ is the bi-invariant orthogonal complement of $u$ in $\hat{\mathfrak{g}}_0$.
Let $v$ be
a nonzero vector in $\mathfrak{g}_{\pm\alpha}$. Since $\alpha\in\mathfrak{t}\cap\mathfrak{h}$, it is easy to see that $u$ and $v$ are
 linearly independent and commutative.

Let $\alpha'=\mathrm{pr}_\mathfrak{h}(\alpha)$.
Then a direct calculation shows that
\begin{equation*}
[u,\mathfrak{m}]_{\mathfrak{m}}\subset u^\perp\cap\hat{\mathfrak{g}}_0
+\sum_{\gamma'\neq\alpha'}
\hat{\mathfrak{g}}_{\pm\gamma'}.
\end{equation*}
Thus by (\ref{2100}) and (1) of Lemma \ref{lemma-3-7},
we have
\begin{equation}\label{2101}
\langle[u,\mathfrak{m}]_\mathfrak{m},u\rangle_u^F=
\langle[u,\mathfrak{m}]_\mathfrak{m},v\rangle_u^F=0.
\end{equation}
On the other hand, a direct calculation also shows that
\begin{equation*}
{[}v,\mathfrak{m}{]}_\mathfrak{m}\subset\sum_{\gamma'\neq 0}
\hat{\mathfrak{g}}_{\pm\gamma'}.
\end{equation*}
Hence by (1) of Lemma \ref{lemma-3-7}, we have
\begin{equation}\label{2102}
\langle[v,\mathfrak{m}]_\mathfrak{m},u\rangle_u^F=0.
\end{equation}

Taking the summation of (\ref{2101}) and (\ref{2102}), we get $U(u,v)=0$. Hence by Theorem \ref{flag-curvature-formula-thm}, we have
$K^F(o,u,u\wedge v)=0$.
This is a contradiction.
\end{proof}

\begin{lemma} \label{key-lemma-2}
Let $F$ be a reversible positively curved homogeneous Finsler metric on an odd dimensional coset space $G/H$. Keep all the relevant notation as before. Then
there does not exist a pair of  linearly independent roots $\alpha$ and
$\beta$ of $\mathfrak{g}$ such that the following (1)-(4) hold  simultaneously:
\begin{description}
\item{\rm (1)}\quad Neither $\alpha$ nor $\beta$ is a root of $\mathfrak{h}$;
\item{\rm (2)}\quad None of $\alpha\pm\beta$ is a root of $\mathfrak{g}$;
\item{\rm (3)}\quad $\pm\alpha$ are the only roots of $\mathfrak{g}$ in $\mathbb{R}\alpha+\mathfrak{t}\cap\mathfrak{m}$;
\item{\rm (4)}\quad $\pm\beta$ are the only roots of $\mathfrak{g}$ in
$\mathbb{R}\alpha\pm\beta+\mathfrak{t}\cap\mathfrak{m}$.
\end{description}
\end{lemma}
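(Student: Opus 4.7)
The plan is to argue by contradiction: assuming such $\alpha,\beta$ exist, I will exhibit a linearly independent commutative pair $u\in\mathfrak{g}_{\pm\alpha}$ and $v\in\mathfrak{g}_{\pm\beta}$ in $\mathfrak{m}$ whose flag curvature vanishes by Theorem \ref{flag-curvature-formula-thm}, contradicting positivity. By condition~(1) both root planes lie in $\mathfrak{m}$; condition~(2) combined with Lemma \ref{trick-lemma-0}(2) forces $[\mathfrak{g}_{\pm\alpha},\mathfrak{g}_{\pm\beta}]=0$, so any such pair commutes, and distinct root planes give linear independence. I take $v\in\mathfrak{g}_{\pm\beta}$ arbitrary nonzero, and choose $u\in\mathfrak{g}_{\pm\alpha}$ to be a critical point of $F$ restricted to the bi-invariant unit circle of $\mathfrak{g}_{\pm\alpha}$; by 2-homogeneity of $F^2$ this yields $g_u^F(u,Ju)=0$, where $J$ is the canonical complex structure on $\mathfrak{g}_{\pm\alpha}$. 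The linear independence of $\beta$ from $\alpha$ together with condition~(3) forces $\beta\notin\mathbb{R}\alpha+\mathfrak{t}\cap\mathfrak{m}$, and the main case I treat is $\alpha'=\mathrm{pr}_\mathfrak{h}(\alpha)\ne 0$.

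To invoke Theorem \ref{flag-curvature-formula-thm} I need $\langle[u,\mathfrak{m}]_\mathfrak{m},u\rangle_u^F=0$. Applying Lemma \ref{lemma-3-8} to $u\in\hat{\mathfrak{m}}_{\pm\alpha'}$, condition~(3) identifies $\hat{\hat{\mathfrak{m}}}_0=(\mathfrak{t}\cap\mathfrak{m})+\mathfrak{g}_{\pm\alpha}$, and only the $\hat{\hat{\mathfrak{m}}}_0$-component of $[u,\mathfrak{m}]_\mathfrak{m}$ pairs with $u\in\hat{\hat{\mathfrak{m}}}_0$. Brackets $[u,\mathfrak{g}_{\pm\tau}]$ with $\tau$ a root contribute to $\hat{\hat{\mathfrak{m}}}_0$ only if $\alpha\pm\tau$ is a root in $\mathbb{R}\alpha+\mathfrak{t}\cap\mathfrak{m}$; by (3) and the absence of root doubles in compact Lie algebras, this forces $\tau=\pm\alpha$. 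Hence the two surviving pieces are $[u,\mathfrak{t}\cap\mathfrak{m}]=\mathbb{R}Ju\subset\mathfrak{g}_{\pm\alpha}$, which is $g_u^F$-orthogonal to $u$ by the critical-point choice, and $\mathrm{pr}_\mathfrak{m}[u,\mathfrak{g}_{\pm\alpha}]\subset\mathrm{pr}_\mathfrak{m}(\mathbb{R}\alpha)\subset\mathfrak{t}\cap\mathfrak{m}$, which is $g_u^F$-orthogonal to $u$ by Lemma \ref{lemma-3-6} applied with $\beta'=\alpha'$ (an odd, hence non-even, multiple of $\alpha'$). So the hypothesis is met.

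It remains to show $U(u,v)=0$, i.e.\ $\langle[w,u]_\mathfrak{m},v\rangle_u^F+\langle[w,v]_\mathfrak{m},u\rangle_u^F=0$ for every $w\in\mathfrak{m}$. Condition~(4) identifies $\hat{\hat{\mathfrak{m}}}_{\pm\beta''}=\mathfrak{g}_{\pm\beta}$, so the first summand requires $[w,u]_\mathfrak{m}$ to have a $\mathfrak{g}_{\pm\beta}$-component; this would demand a root $\tau$ with $\alpha\pm\tau=\pm\beta$, precluded by condition~(2). For the second summand, the same case analysis using (2) and (3) leaves only $[\mathfrak{g}_{\mp\beta},v]\subset\mathbb{R}\beta$, whose $\mathfrak{m}$-projection lies in $\mathfrak{t}\cap\mathfrak{m}\subset\hat{\mathfrak{g}}_0$ and again pairs trivially with $u$ by Lemma \ref{lemma-3-6}. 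Thus $U(u,v)=0$, Theorem \ref{flag-curvature-formula-thm} gives $K^F(o,u,u\wedge v)=0$, and we arrive at the desired contradiction. The principal bookkeeping obstacle is the degenerate case $\alpha\in\mathfrak{t}\cap\mathfrak{m}$ (so $\alpha'=0$), in which Lemma \ref{lemma-3-6} no longer applies and $\mathrm{Ad}(T_H)$ acts trivially on $\mathfrak{g}_{\pm\alpha}$; there $\hat{\mathfrak{g}}_0\cap\mathfrak{m}=(\mathfrak{t}\cap\mathfrak{m})+\mathfrak{g}_{\pm\alpha}$ is three-dimensional, and one should conjugate the fundamental Cartan subalgebra inside the $\mathfrak{su}(2)$-subalgebra $\mathbb{R}\alpha+\mathfrak{g}_{\pm\alpha}\subset\mathfrak{m}$ so that Lemma \ref{lemma-3-7}(2) applies, then rerun a parallel argument.
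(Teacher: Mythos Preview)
Your proof follows essentially the same route as the paper's: pick $u\in\mathfrak{g}_{\pm\alpha}$, $v\in\mathfrak{g}_{\pm\beta}$, use the $g_u^F$-orthogonal decomposition of Lemma \ref{lemma-3-8} together with conditions (2)--(4) and Lemma \ref{lemma-3-6} to verify $\langle[u,\mathfrak{m}]_\mathfrak{m},u\rangle_u^F=0$ and $U(u,v)=0$, and conclude via Theorem \ref{flag-curvature-formula-thm}. The only noteworthy differences are that the paper obtains $\langle u,u'\rangle_u^F=0$ from the $\mathrm{Ad}(T_H)$-invariance of $F|_{\mathfrak{g}_{\pm\alpha}}$ (which makes it Euclidean when $\alpha'\neq 0$) rather than your critical-point choice, and that the paper's written argument in fact only treats the case $\alpha'\neq 0$---your explicit handling of the degenerate case $\alpha\in\mathfrak{t}\cap\mathfrak{m}$ via Lemma \ref{lemma-3-7}(2) is a genuine refinement.
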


\begin{proof}
Assume conversely that there are roots $\alpha$ and $\beta$ of $\mathfrak{g}$
satisfying (1)-(4) of the lemma. Denote $\alpha'=\mathrm{pr}_{\mathfrak{h}}(\alpha)$ and $\beta'=\mathrm{pr}_{\mathfrak{h}}(\beta)$.
Then $\mathfrak{g}_{\pm\alpha}$ must be contained
in $\mathfrak{m}$, otherwise by (3) of the lemma,
$\mathfrak{g}_{\pm\alpha}=\hat{\mathfrak{g}}_{\pm\alpha'}$ is a root plane in $\mathfrak{h}$, hence
$\alpha\subset[\mathfrak{g}_{\pm\alpha},\mathfrak{g}_{\pm\alpha}]
\subset\mathfrak{h}$ is a root of $\mathfrak{h}$, which
is a contradiction to (1). Similarly, by (4) of the lemma, $\mathfrak{g}_{\pm\beta}=\hat{\mathfrak{g}}_{\pm\beta'}$ is also contained in $\mathfrak{m}$.

First we consider the case that $\alpha'\neq 0$, i.e., $\alpha$ is not contained by
$\mathfrak{t}\cap\mathfrak{m}$. Let $u$ and $v$ be any nonzero vectors in $\mathfrak{g}_{\pm\alpha}$ and $\mathfrak{g}_{\pm\beta}$ respectively. By (1) of the lemma and the above argument, they must be  linearly independent and commutative.

Let $u'$ be another nonzero vector in $\mathfrak{g}_{\pm\alpha}$ such that
$\langle u,u'\rangle_{\mathrm{bi}}$=0. By the $\mathrm{Ad}(T_H)$-invariance of
$F|_{\mathfrak{g}_{\pm\alpha}}$,  it coincides with the restriction of
the bi-invariant inner product up to a scalar. So we have
\begin{equation}\label{2109}
\langle u^\perp\cap\mathfrak{g}_{\pm\alpha},u\rangle_u^F=\langle\mathbb{R}u',u\rangle_u^F=0,
\end{equation}
 where $u^\perp\cap\mathfrak{g}_{\pm\alpha}=\mathbb{R}u'$ is the bi-invariant orthogonal complement of $u$ in $\mathfrak{g}_{\pm\alpha}$.

Let $\mathfrak{t}'$ be the bi-invariant orthogonal complement of $\alpha$ in
$\mathfrak{h}$, and $\mathrm{pr}_{\mathfrak{t}'}$ be the orthogonal projection to
$\mathfrak{t}'$ with respect to the bi-invariant inner product. By Lemma \ref{lemma-3-8},  $\mathfrak{m}$ can be $g_u^F$-orthogonally decomposed as the sum of
$$\hat{\hat{\mathfrak{m}}}_{\pm\gamma''}=
(\sum_{\mathrm{pr}_{\mathfrak{t}'}(\gamma)=\gamma''}\mathfrak{g}_{\gamma})
\cap\mathfrak{m}$$ for all different $\{\pm\gamma''\}\subset\mathfrak{t}'$. In particular,  (3) and (4) of the lemma indicates that
\begin{equation}\label{2110}
\hat{\mathfrak{g}}_0=\mathfrak{t}\cap\mathfrak{m},\mbox{ }
\hat{\hat{\mathfrak{m}}}_0=\mathfrak{t}\cap\mathfrak{m}+\mathfrak{g}_{\pm\alpha},
\mbox{ and }
\hat{\hat{\mathfrak{m}}}_{\pm\beta''}=
\mathfrak{g}_{\pm\beta},
\end{equation}
where $\beta''=\mathrm{pr}_{\mathfrak{t}'}(\beta)$.

Now   (1), (2) of the lemma and a direct calculation implies that
$$[u,\mathfrak{m}]\subset\mathfrak{t}\cap\mathfrak{m}+u^\perp\cap\mathfrak{g}_{\pm\alpha}
+\sum_{\gamma''\neq 0,\gamma''\neq\pm\beta'',}
\hat{\hat{\mathfrak{m}}}_{\pm\gamma''}.$$
So by Lemma \ref{lemma-3-6}, Lemma \ref{lemma-3-8} and (\ref{2109}),
we have
\begin{equation}\label{2120}
\langle[u,\mathfrak{m}]_\mathfrak{m},u\rangle_u^F=
\langle[u,\mathfrak{m}]_\mathfrak{m},v\rangle_u^F=0.
\end{equation}
On the other hand, a direct calculation also shows that
$$[v,\mathfrak{m}]_\mathfrak{m}\subset \hat{\mathfrak{g}}_0 +
\sum_{\gamma''\neq 0}\hat{\hat{\mathfrak{m}}}_{\pm\gamma''}.$$
Thus by Lemma \ref{lemma-3-6} and Lemma \ref{lemma-3-8},
we have
\begin{equation}\label{2121}
\langle[v,\mathfrak{m}]_\mathfrak{m},u\rangle_u^F=0.
\end{equation}
Taking the  summation of  (\ref{2120}) and (\ref{2121}), we get $U(u,v)=0$. Hence by Theorem \ref{flag-curvature-formula-thm}, we have  $K^F(o,u,u\wedge v)=0$.
This is a contradiction.
\end{proof}

Notice that Lemmas \ref{lemma-3-7},  \ref{lemma-3-8} and  \ref{key-lemma-1} does not require $F$ to be reversible.
For most cases in later discussions, the key lemmas will be  enough
 to deduce our classification. But in some cases (Subsection 5.5 for example),
we need to use Theorem \ref{flag-curvature-formula-thm} to deduce some more delicate results
to complete the proofs.

\section{Case III: the general reduction and the classical groups}
In this section, we consider the Case III for classical groups.
\subsection{The general reduction}

Assume that $(G/H,F)$ is an odd dimensional positively curved reversible homogeneous Finsler space in Case III,
i.e., with respect to a bi-invariant decomposition $\mathfrak{g}=\mathfrak{h}+\mathfrak{m}$ for the compact Lie algebra $\mathfrak{g}=\mathrm{Lie}(G)$, and a fundamental Cartan subalgebra $\mathfrak{t}$, there exists a pair of roots $\alpha$ and $\beta$ of $\mathfrak{g}$ from the same simple factor, with $\alpha\neq\pm\beta$, such that
$\mathrm{pr}_\mathfrak{h}(\alpha)=\mathrm{pr}_\mathfrak{h}(\beta)=\alpha'$ is a root of
$\mathfrak{h}$. Obviously, in this case $\mathfrak{t}\cap\mathfrak{m}$ is
spanned by $\alpha-\beta$.

We first prove the following lemma.
\begin{lemma}\label{lemma-4-1}
Let $(G/H,F)$ be an odd dimensional positively curved reversible
  homogeneous Finsler space in Case III. Keep all the relevant notationa. Then $(G/H,F)$ is equivalent to a positively curved reversible homogeneous Finsler space $(G'/H',F')$ in which $G'$ is a compact simple Lie group.
\end{lemma}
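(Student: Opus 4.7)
The plan is to show that, up to equivalence, we can strip $\mathfrak{g}$ of every simple factor and central part other than the distinguished simple ideal that contains $\alpha$ and $\beta$. Decompose $\mathfrak{g}$ bi-invariantly as $\mathfrak{g}=\mathfrak{z}(\mathfrak{g})\oplus\mathfrak{g}_1\oplus\cdots\oplus\mathfrak{g}_k$, with $\mathfrak{g}_1$ the simple ideal containing the pair $\alpha,\beta$. Corollary \ref{rank-equality-corollary} gives $\mathrm{rk}\,\mathfrak{g}=\mathrm{rk}\,\mathfrak{h}+1$, so combined with $\alpha-\beta\in\mathfrak{t}\cap\mathfrak{m}$ being nonzero we must have $\mathfrak{t}\cap\mathfrak{m}=\mathbb{R}(\alpha-\beta)\subset\mathfrak{g}_1$. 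Set $\mathfrak{g}':=\mathfrak{z}(\mathfrak{g})\oplus\mathfrak{g}_2\oplus\cdots\oplus\mathfrak{g}_k$. The heart of the argument is the claim $\mathfrak{g}'\subset\mathfrak{h}$.

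For the Cartan part this is immediate: if $x\in\mathfrak{t}\cap\mathfrak{g}'$ and $x=x_\mathfrak{h}+x_\mathfrak{m}$, then $x_\mathfrak{m}\in\mathbb{R}(\alpha-\beta)\subset\mathfrak{g}_1$ while $x\in\mathfrak{g}'\perp\mathfrak{g}_1$, and a one-line bi-invariant inner product computation forces $x_\mathfrak{m}=0$. In particular $\mathfrak{z}(\mathfrak{g})\subset\mathfrak{t}\cap\mathfrak{h}$ is settled at once. For the root planes, let $\gamma$ be any root of $\mathfrak{g}$ supported on some $\mathfrak{g}_i$ with $i\geq 2$. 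Then $\gamma\in\mathfrak{t}\cap\mathfrak{g}_i\subset\mathfrak{t}\cap\mathfrak{h}$ by the above. Moreover $\gamma$ is the unique root of $\mathfrak{g}$ lying in $\gamma+\mathfrak{t}\cap\mathfrak{m}$: any candidate $\gamma+c(\alpha-\beta)$ with $c\neq 0$ has nonzero projections to both $\mathfrak{g}_i$ and $\mathfrak{g}_1$, incompatible with a root living in a single simple ideal. Lemma \ref{key-lemma-1} therefore gives $\mathfrak{g}_{\pm\gamma}\subset\mathfrak{h}$. Summing over all roots of $\mathfrak{g}_i$ together with its Cartan part yields $\mathfrak{g}_i\subset\mathfrak{h}$, and combining with the central part we conclude $\mathfrak{g}'\subset\mathfrak{h}$.

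With $\mathfrak{g}'\subset\mathfrak{h}$ the decomposition $\mathfrak{h}=\mathfrak{h}_1\oplus\mathfrak{g}'$ holds as a direct sum of ideals, where $\mathfrak{h}_1:=\mathfrak{h}\cap\mathfrak{g}_1$; moreover $\mathfrak{m}\subset\mathfrak{g}_1$ because $\mathfrak{m}\perp\mathfrak{h}\supset\mathfrak{g}'$. At the group level I would pass to the cover $\tilde G=G_1\times G'$ (with $G_1$ simply connected corresponding to $\mathfrak{g}_1$, and $G'$ a compact group for $\mathfrak{g}'$) via equivalence (1), so that the pullback of $H$ has identity component $H_1^0\times G'^0$; replacing $H$ by a subgroup with the same identity component and canceling the common factor $G'$ via equivalence (2) produces the desired presentation $G_1/H_1$ with $G_1$ compact simple, the Minkowski norm on $\mathfrak{m}\subset\mathfrak{g}_1$ being untouched. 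The only delicate point is this final group-theoretic bookkeeping—lifting to a cover and then discarding $G'$ cleanly—which is precisely what the three-part equivalence introduced in Subsection 2.5 was designed to absorb.
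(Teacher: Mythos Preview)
Your proof is correct and follows essentially the same route as the paper: both decompose $\mathfrak{g}$ into its center plus simple ideals, observe $\mathfrak{t}\cap\mathfrak{m}=\mathbb{R}(\alpha-\beta)\subset\mathfrak{g}_1$, apply Lemma \ref{key-lemma-1} to force every root plane of the other simple factors into $\mathfrak{h}$, and then invoke the equivalence relation to cancel the common factor. Your write-up is in fact a bit more explicit than the paper's (you spell out why $\gamma$ is the unique root on its affine line and why the Cartan piece of $\mathfrak{g}'$ lies in $\mathfrak{h}$), but the underlying argument is identical.
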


\begin{proof}
Suppose $\mathfrak{g}$ has a  direct sum decomposition  as
$$\mathfrak{g}=\mathfrak{g}_0\oplus\mathfrak{g}_1\cdots\oplus\mathfrak{g}_n,$$
where $\mathfrak{g}_0$ is an abelian subalgebra, and for  $i>0$, $\mathfrak{g}_i$ is a simple ideal of $\mathfrak{g}$.
Let $\alpha$ and $\beta$ be two roots of $\mathfrak{g}_1$.
Then obviously the abelian factor $\mathfrak{g}_0$ is contained in $\mathfrak{h}$.

 Let $\gamma$ be a root of $\mathfrak{g}_i$ with $i>1$. Then  $\gamma$
is the only root contained in $\gamma+\mathfrak{t}\cap\mathfrak{m}$. Thus
by Lemma \ref{key-lemma-1}, $\gamma$ is a root of $\mathfrak{h}$ and
$\mathfrak{g}_{\pm\gamma}=\mathfrak{h}_{\pm\gamma}$ is contained in $\mathfrak{h}$.  Since the simple factor $\mathfrak{g}_i$, $i>1$
is algebraically generated by its root planes, we have $\mathfrak{g}_i\subset\mathfrak{h}$ for $i>1$. Let $G'/H'$ be the homogeneous
space corresponding to the pair $(\mathfrak{g}_1,\mathfrak{h}_1)$. Then $G'/H'$ admits a homogeneous Finsler metric $F'$ naturally induced by $F$, such that
$(G/H,F)$ is equivalent to $(G'/H',F')$. This completes the proof of the lemma.
\end{proof}

 Since Lemma \ref{key-lemma-1} holds without  the reversible assumption, Lemma \ref{lemma-4-1} is also valid for non-reversible metrics.

 In the following we will start a  case by case consideration of  the compact simple Lie algebras. However, there are some common situations which can be uniformly dealt with. We summarize them as the following lemma.

\begin{lemma}\label{lemma-999}
Let $(G/H,F)$ be an odd dimensional positively curved reversible homogeneous Finsler space in Case III, with  compact simple Lie algebra $\mathfrak{g}=\mathrm{Lie}(G)$. Then for any two different roots $\alpha$ and $\beta$ such that $\mathrm{pr}_\mathfrak{h}(\alpha)=\mathrm{pr}_{\mathfrak{h}}(\beta)=\alpha'$ is a root of $\mathfrak{h}$, the angle between $\alpha$ and $\beta$ can not be
$\frac{\pi}{3}$ or $\frac{2 \pi}{3}$.
\end{lemma}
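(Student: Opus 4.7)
The plan is to treat the two possible angles separately after a common preliminary. In either case integrality of the Cartan integer $2\langle\alpha,\beta\rangle/\langle\alpha,\alpha\rangle\in\{\pm 1\}$ forces $|\alpha|=|\beta|$. Setting $\gamma:=\alpha-\beta$, the hypothesis $\mathrm{pr}_\mathfrak{h}(\alpha)=\mathrm{pr}_\mathfrak{h}(\beta)$ places $\gamma\in\mathfrak{t}\cap\mathfrak{m}$, and by Corollary \ref{rank-equality-corollary} we obtain $\mathfrak{t}\cap\mathfrak{m}=\mathbb{R}\gamma$.

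For the angle $\frac{2\pi}{3}$ case, the plan is to apply Lemma \ref{key-lemma-1} to $\delta:=\alpha+\beta$, which is a root of $\mathfrak{g}$ via the Weyl reflection $s_\alpha(\beta)=\beta+\alpha$. Equality of lengths yields $\langle\delta,\gamma\rangle=|\alpha|^2-|\beta|^2=0$, so combined with $\delta\in\mathfrak{t}$ this forces $\delta\in\mathfrak{t}\cap\mathfrak{h}$ and in fact $\delta=2\alpha'$. Next I would check that $\delta$ is the only root of $\mathfrak{g}$ in $\delta+(\mathfrak{t}\cap\mathfrak{m})$: any such root $\epsilon$ satisfies $\mathrm{pr}_\mathfrak{h}(\epsilon)=2\alpha'$, forcing $\epsilon$ to lie in the plane spanned by $\alpha'$ and $\gamma$, which coincides with the plane of $\alpha,\beta$; a short enumeration of roots in that plane then identifies $\delta$ as the unique such root. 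Lemma \ref{key-lemma-1} then forces $\delta=2\alpha'\in\Delta_\mathfrak{h}$, contradicting the reducedness of $\Delta_\mathfrak{h}$ since $\alpha'\in\Delta_\mathfrak{h}$.

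For the angle $\frac{\pi}{3}$ case, $\gamma$ is itself a root of $\mathfrak{g}$ belonging to $\mathfrak{t}\cap\mathfrak{m}$, so $\mathfrak{g}_{\pm\gamma}\subset\mathfrak{m}$ and $\dim\hat{\mathfrak{g}}_0=3$. The plan is to apply Theorem \ref{flag-curvature-formula-thm} to a commutative pair $(u,v)$ in $\mathfrak{m}$ producing zero flag curvature. I would take $u$ nonzero in $\mathfrak{g}_{\pm\gamma}$; Lemma \ref{lemma-3-7}(2) is now applicable and allows one to adjust the fundamental Cartan subalgebra so that $\eta(u)=0$. For $v$, I would choose a root $\delta$ of $\mathfrak{g}$ lying in $\mathfrak{t}\cap\mathfrak{h}$ with $\delta\notin\Delta_\mathfrak{h}$ and $\delta\pm\gamma\notin\Delta_\mathfrak{g}$, and take a nonzero $v\in\mathfrak{g}_{\pm\delta}\subset\mathfrak{m}$; then $[u,v]=0$. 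The orthogonalities in Lemmas \ref{lemma-3-7} and \ref{lemma-3-6}, combined with the fact that $[u,\mathfrak{m}]_\mathfrak{m}$ and $[v,\mathfrak{m}]_\mathfrak{m}$ land in $\mathrm{Ad}(T_H)$-weight components disjoint from the spans of $u$ and $v$, then imply $U(u,v)=0$, giving $K^F(o,u,u\wedge v)=0$ and the desired contradiction. The main obstacle is the $\frac{\pi}{3}$ case: producing the partner root $\delta$ requires enough structural information about the root system of the simple $\mathfrak{g}$, and in non-simply-laced or small-rank types a short root-system analysis using the appendix (Section \ref{appendix-section}) may be needed; if no suitable $\delta$ in $\mathfrak{t}\cap\mathfrak{h}$ exists, a variant taking $v$ from a root plane outside $\mathfrak{t}\cap\mathfrak{h}$ but avoiding the $\gamma$-string would have to be used.
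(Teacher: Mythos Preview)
Your treatment of the $\tfrac{2\pi}{3}$ case is correct and is exactly the paper's argument: $\delta=\alpha+\beta=2\alpha'$ is a root of $\mathfrak{g}$ in $\mathfrak{t}\cap\mathfrak{h}$, it is the unique root on $\delta+(\mathfrak{t}\cap\mathfrak{m})$, and Lemma~\ref{key-lemma-1} then forces $2\alpha'\in\Delta_\mathfrak{h}$, contradicting reducedness.

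Your plan for the $\tfrac{\pi}{3}$ case, however, has a genuine gap, and it is not a matter of case analysis in the appendix. The very lemma you rely on elsewhere rules out the partner root $\delta$ you want. You ask for $\delta\in\Delta_\mathfrak{g}\cap(\mathfrak{t}\cap\mathfrak{h})$ with $\delta\notin\Delta_\mathfrak{h}$ and $\delta\pm\gamma\notin\Delta_\mathfrak{g}$. Since $\gamma$ is a root, the $\gamma$-string through $\delta$ has length one, so $\delta$ is the only root of $\mathfrak{g}$ on $\delta+(\mathfrak{t}\cap\mathfrak{m})=\delta+\mathbb{R}\gamma$; Lemma~\ref{key-lemma-1} then forces $\delta\in\Delta_\mathfrak{h}$ and $\mathfrak{g}_{\pm\delta}\subset\mathfrak{h}$, so no $v\in\mathfrak{g}_{\pm\delta}\cap\mathfrak{m}$ is available. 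Your fallback (``take $v$ from a root plane outside $\mathfrak{t}\cap\mathfrak{h}$ but off the $\gamma$-string'') does not help either: already in the rank-two situation $\mathfrak{g}=A_2$ the only root planes besides $\mathfrak{g}_{\pm\gamma}$ are $\mathfrak{g}_{\pm\alpha}$ and $\mathfrak{g}_{\pm\beta}$, and neither lies in $\mathfrak{m}$ because $\mathfrak{h}_{\pm\alpha'}$ sits diagonally in $\mathfrak{g}_{\pm\alpha}+\mathfrak{g}_{\pm\beta}$.

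The paper bypasses the flag curvature formula entirely here. It first uses Lemma~\ref{totally-geodesic-lemma} with $\mathfrak{t}'=(\mathbb{R}\alpha+\mathbb{R}\beta)^\perp\cap\mathfrak{t}$ to reduce to $\mathfrak{g}''=\mathfrak{su}(3)$ with $\mathfrak{h}''$ of type $A_1$, whose Cartan subalgebra is $\mathbb{R}\alpha'$ and whose root plane $\mathfrak{h}_{\pm\alpha'}$ lies in $\hat{\mathfrak{g}}_{\pm\alpha'}=\mathfrak{g}_{\pm\alpha}+\mathfrak{g}_{\pm\beta}$. Writing $\mathfrak{h}''=\mathbb{R}w_1+\mathbb{R}w_2+\mathbb{R}w_3$ explicitly in matrices, a direct bracket computation shows $[w_2,w_3]\notin\mathfrak{h}''$, so no such subalgebra exists; this is a purely algebraic contradiction, no curvature needed. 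The $G_2$ case is then reduced to the $\tfrac{2\pi}{3}$ case (short roots at $\tfrac{\pi}{3}$ become long roots at $\tfrac{2\pi}{3}$; long roots at $\tfrac{\pi}{3}$ yield $\tfrac{2}{3}\alpha'\in\Delta_\mathfrak{h}$ via Lemma~\ref{key-lemma-1}). You should replace your $\tfrac{\pi}{3}$ argument with this algebraic one.
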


\begin{proof}
First we assume that $\mathfrak{g}\neq \mathfrak{g}_2$ and prove that the angle between
$\alpha$ and $\beta$ can not be $\frac \pi 3$.
Assume conversely that the angle between $\alpha$ and $\beta$ is $\frac \pi 3$.
Let $\mathfrak{t}'=\alpha'^\perp\cap\mathfrak{t}\cap\mathfrak{h}=
(\mathbb{R}\alpha+\mathbb{R}\beta)^\perp\cap\mathfrak{t}
$ be the bi-invariant orthogonal
complement of $\alpha'$ in $\mathfrak{t}\cap\mathfrak{h}$, and $T'$ be the corresponding torus in $H$. Notice that there is a decomposition  $\mathrm{Lie}(C_G(T'))=\mathfrak{t}'\oplus A_2$, such that $\alpha$
and $\beta$ are roots of the $A_2$-factor.
By Lemma \ref{totally-geodesic-lemma}, there is a positively curved homogeneous Finsler space $(G''/H'',F'')$, where $\mathfrak{g}''=\mathrm{Lie}(G'')=\mathfrak{su}(3)$, and
$\mathfrak{h}''=\mathrm{Lie}(H'')=A_1$ is
linearly spanned by
$$
w_1=\sqrt{-1}\left(
    \begin{array}{ccc}
      -2 & 0 & 0 \\
      0 & 1 & 0 \\
      0 & 0 & 1 \\
    \end{array}
  \right),
w_2=\sqrt{-1}\left(
             \begin{array}{ccc}
               0 & \bar{a} & \bar{b} \\
               a & 0 & 0 \\
               b & 0 & 0 \\
             \end{array}
           \right),
\mbox{ and }$$
$$w_3=\frac13[w_1,w_2]=\left(
          \begin{array}{ccc}
            0 & \bar{a} & \bar{b} \\
            -a & 0 & 0 \\
            -b & 0 & 0 \\
          \end{array}
        \right),
$$
where $a,b\in\mathbb{C}$ and $(a,b)\neq (0,0)$.
But then $[w_3,w_1]$ is not contained in $\mathfrak{h}''$. This is a contradiction.

Now we prove that the angle between $\alpha$ and $\beta$ can not be $\frac{2\pi}{3}$.
Assume conversely that it is $\frac{2\pi}{3}$. Then  $\alpha'=\frac12(\alpha+\beta)$
is a root of $\mathfrak{h}$. But then $\gamma=2\alpha'=\alpha+\beta$ is a root
of $\mathfrak{g}$ contained in $\mathfrak{t}\cap\mathfrak{h}$, and it is
the only root contained in $\gamma+(\mathfrak{t}\cap\mathfrak{m})$. So by
Lemma \ref{key-lemma-1}, $\gamma=2\alpha'$ is also a root of $\mathfrak{h}$.
This is a contradiction.

Finally, we assume that $\mathfrak{g}=\mathfrak{g}_2$ and prove that the angle between $\alpha$ and $\beta$ can not be $\frac \pi 3$. If $\alpha$ and $\beta$ are short roots, then they
can be replaced with two long roots with  angle $\frac{2\pi}{3}$, which has already been proven to be impossible.
If $\alpha$ and $\beta$ are long roots, then $\alpha'=\frac12
(\alpha+\beta)$ is a root of $\mathfrak{h}$. By Lemma \ref{key-lemma-1} and a
similar argument as above, the short root $\gamma=\frac13(\alpha+\beta)=\frac23\alpha'$ is also a root of $\mathfrak{h}$. This is a contradiction.
\end{proof}

Now we start the case by case discussion. Notice that in the following, we always assume that the relevant coset space has been endowed with an invariant reversible Finsler metric with positive flag curvature. If a contradiction arises, then we can conclude that the coset space cannot be positively curved
in the reversible homogeneous sense.
In each case, we use the standard
presentation of the root systems (see Section \ref{appendix-section}), and divide the discussion into subcases with
respect to the rank of $G$, the long/short roots choices of $\alpha$ and $\beta$ and the angle between $\alpha$ and
$\beta$. Using the Weyl group actions and more outer automorphisms for $D_n$ and
$E_6$, the subcases can be reduced to the following.

\subsection{The case $\mathfrak{g}=A_n$}

We only need to consider the following subcases.

{\bf Subcase 1.}\quad $n=3$, and $\alpha=e_1-e_4$, $\beta=e_3-e_2$.

 In this case, we have $\mathfrak{t}\cap\mathfrak{m}=\mathbb{R}(e_1+e_2-e_3-e_4)$ and it is easy to see that  $\alpha'=\frac12(e_1-e_2+e_3-e_4)$ is a root of $\mathfrak{h}$.
By Lemma \ref{key-lemma-1}, $e_1-e_2$ and $e_3-e_4$ are roots of $\mathfrak{h}$. Notice that $\hat{\mathfrak{g}}_{\pm(e_1-e_2)}=
\mathfrak{g}_{\pm(e_1-e_2)}$ is a root plane of $\mathfrak{h}$. Let $\beta'=\frac12(-e_1+e_2+e_3-e_4)\in\mathfrak{t}\cap\mathfrak{h}$. Then
any non zero $u\in\mathfrak{g}_{\pm(e_1-e_2)}\subset\mathfrak{h}$ defines a linear isomorphism
\begin{equation}\label{0008}
\mathrm{ad}(u):\hat{\mathfrak{g}}_{\pm\alpha'}=\mathfrak{g}_{\pm(e_1-e_4)}+\mathfrak{g}_{\pm(e_2-e_3)}
\to\hat{\mathfrak{g}}_{\pm\beta'}=\mathfrak{g}_{\pm(e_2-e_4)}+\mathfrak{g}_{\pm(e_1-e_3)}.
\end{equation}
Since $u\in\mathfrak{h}$,  $\mathrm{ad}(u)$ preserves the
bi-invariant orthogonal decomposition. So
$\beta'=\frac12(-e_1+e_2+e_3-e_4)$ is also a root of $\mathfrak{h}$. Now we prove that
$\mathfrak{h}=B_2$ and its root system is
$$\{\pm(e_1-e_2),\pm(e_3-e_4), \pm\alpha',\pm\beta'\}.$$

Now we prove that up to conjugation, $\mathfrak{h}$ is uniquely determined.
By (\ref{0008}), it is easy to see that $\mathfrak{h}$ is uniquely determined by $\mathfrak{h}_{\pm\alpha'}$.
Let $\mathfrak{g}'$ be the subalgebra of $\mathfrak{g}$ isomorphic to $A_1\oplus A_1$, defined by
$$\mathfrak{g}'=\mathbb{R}\alpha+\mathbb{R}\beta+
\mathfrak{g}_{\pm(e_1-e_4)}+\mathfrak{g}_{\pm(e_2-e_3)},$$
and let $\mathfrak{h}'$ be the subalgebra of $\mathfrak{g}'$ defined by $\mathfrak{h}'=\mathbb{R}\alpha'+\mathfrak{h}_{\pm\alpha'}$. Suppose
$\mathfrak{t}'=\mathfrak{t}\cap\mathfrak{g}'=\mathbb{R}\alpha+\mathbb{R}\beta$ is a fundamental Cartan subalgebra of $\mathfrak{g}'$.
Then we also have the induced bi-invariant orthogonal decomposition
$\mathfrak{g}'=\mathfrak{h}'+\mathfrak{m}'$, such that $\mathfrak{m}'=\mathfrak{m}\cap\mathfrak{g}'$ and
$\mathfrak{t}'\cap\mathfrak{m}'=\mathfrak{t}\cap\mathfrak{m}$. Notice also that
$\mathfrak{h}'$ can not have nonzero intersection with either of the two
simple factors of $\mathfrak{g}'$, otherwise,  by $\mathrm{Ad}(\exp\mathfrak{h}')$-actions, the whole subalgebra
 $\mathfrak{h}'$ coincides with that factor, which is a contradiction with the fact that $\mathfrak{h}'$ is diagonal in $\mathfrak{g}'$.

The following lemma will be useful.

\begin{lemma} \label{diagonal-A-1-conjugation-lemma}
Let $\mathfrak{g}'=\mathfrak{g}_1\oplus\mathfrak{g}_2=A_1\oplus A_1$ be endowed with a bi-invariant inner product. Assume that $\mathfrak{t}'$ is
a Cartan subalgebra, and $\mathfrak{h}'$ and $\mathfrak{h}''$ are subalgebras of $\mathfrak{g}'$
isomorphic to $A_1$ satisfying the following conditions:
\begin{description}
\item{\rm (1)}\quad $\mathfrak{h}'\cap\mathfrak{t}'=\mathfrak{h}''\cap\mathfrak{t}'$ is one
    dimensional.
\item{\rm (2)}\quad
    $\mathfrak{h}'\cap\mathfrak{g}_i=
     \mathfrak{h}''\cap\mathfrak{g}_i=0$,
    $i=1$, $2$.
\item{\rm (3)}\quad
    $\mathfrak{h}'\cap(\mathfrak{h}'\cap\mathfrak{t}')^\perp
    \subset\mathfrak{t}'^\perp$, and
    $\mathfrak{h}''\cap(\mathfrak{h}''\cap\mathfrak{t}')^\perp\subset
    \mathfrak{t}'^\perp$, where the orthogonal complements are taken
    with respect to the chosen bi-invariant inner product on $\mathfrak{g}$.
\end{description}
Then there is an $\mathrm{Ad}(\exp\mathfrak{t}')$-action which maps
$\mathfrak{h}'$ onto $\mathfrak{h}''$.
\end{lemma}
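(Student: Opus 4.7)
The plan is to reduce the problem to an explicit parameterization of the admissible diagonal $A_1$'s and then check that the torus $\exp \mathfrak{t}'$ acts transitively on this family. First I would decompose $\mathfrak{t}' = \mathfrak{t}_1 \oplus \mathfrak{t}_2$ according to the two simple factors and write $\mathfrak{h}' \cap \mathfrak{t}' = \mathbb{R} H_0$ with $H_0 = a H_1 + b H_2$, where $H_i$ is a chosen generator of $\mathfrak{t}_i$. Condition (2) immediately forces $a, b \neq 0$, because if one coefficient vanishes then $\mathfrak{t}'_0$ lies in one of the $\mathfrak{g}_i$. Let $\mathfrak{h}'_1$ denote the bi-invariant orthogonal complement of $\mathfrak{t}'_0$ inside $\mathfrak{h}'$. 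By (3), $\mathfrak{h}'_1$ is a $2$-dimensional subspace of $\mathfrak{t}'{}^{\perp} = (\mathfrak{g}_1)_{\pm\alpha_1} \oplus (\mathfrak{g}_2)_{\pm\alpha_2}$ which is invariant under $\mathrm{ad}(H_0)$.

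Next I would analyze $\mathrm{ad}(H_0)$ on $\mathfrak{t}'{}^{\perp}$. It acts on $(\mathfrak{g}_i)_{\pm\alpha_i}$ as a rotation with some nonzero speed $\lambda_i$ proportional to $a$ or $b$ and the normalizations coming from the chosen bi-invariant form. The only $\mathrm{ad}(H_0)$-invariant $2$-dimensional real subspaces of $\mathfrak{t}'{}^{\perp}$ that are not equal to $(\mathfrak{g}_1)_{\pm\alpha_1}$ or $(\mathfrak{g}_2)_{\pm\alpha_2}$ (both excluded by (2)) exist precisely when $|\lambda_1| = |\lambda_2|$, i.e.\ when the rotation speeds coincide up to sign. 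In that resonant case the admissible $\mathfrak{h}'_1$ is the real span of $\mathrm{Re}\, w$ and $\mathrm{Im}\, w$ for some $w = z_1 v_1 + z_2 v_2$ lying in a common complex eigenline of $\mathrm{ad}(H_0)$, where $v_i \in (\mathfrak{g}_i)_{\pm\alpha_i}^{\mathbb{C}}$ is a fixed eigenvector. Thus admissible $\mathfrak{h}'_1$'s are parameterized by $[z_1 : z_2] \in \mathbb{CP}^1$, with $z_1, z_2 \neq 0$ forced by (2).

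Now I would impose the subalgebra condition $[\mathfrak{h}'_1, \mathfrak{h}'_1] \subset \mathfrak{t}'_0$. A direct bracket computation, exploiting that $[(\mathfrak{g}_i)_{\pm\alpha_i}, (\mathfrak{g}_j)_{\pm\alpha_j}] = 0$ for $i \neq j$, shows that $[\mathrm{Re}\, w, \mathrm{Im}\, w]$ is of the form $\mu_1 |z_1|^2 H_1 + \mu_2 |z_2|^2 H_2$ for positive constants $\mu_i$ determined by the inner product. Demanding this vector lie on $\mathbb{R} H_0 = \mathbb{R}(a H_1 + b H_2)$ pins down the ratio $|z_2|^2 / |z_1|^2$ uniquely in terms of $a, b, \mu_1, \mu_2$. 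Consequently both $\mathfrak{h}'$ and $\mathfrak{h}''$ correspond to parameters lying on the same circle in $\mathbb{CP}^1$, differing only by a phase.

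Finally I would compute the torus action: $\mathrm{Ad}(\exp(\theta_1 H_1 + \theta_2 H_2))$ acts on $v_i$ by a unimodular scalar $e^{\pm i c_i \theta_i}$, so on $[z_1 : z_2]$ it rotates the ratio $z_2 / z_1$ by the real phase $c_1 \theta_1 \mp c_2 \theta_2$, which surjects onto $\mathbb{R}/2\pi\mathbb{Z}$. This gives a transitive action of $\exp \mathfrak{t}'$ on the phase circle and hence an element carrying $\mathfrak{h}'$ to $\mathfrak{h}''$, completing the proof. The main obstacle will be the careful bookkeeping of normalization constants and the resonance condition $|\lambda_1| = |\lambda_2|$; once that is set up, the geometric picture (a circle in $\mathbb{CP}^1$ on which a $1$-parameter subgroup of $\exp \mathfrak{t}'$ acts transitively) makes the conclusion essentially immediate.
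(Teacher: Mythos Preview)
Your proposal is correct and follows the same underlying idea as the paper's proof, but the packaging is different enough to be worth a comparison. The paper works entirely over $\mathbb{R}$: it fixes ``standard bases'' $\{u_1,u_2,u_3\}$ and $\{v_1,v_2,v_3\}$ of the two $A_1$-factors (orthogonal, equal length, with cyclic bracket relations), arranges $u_1+av_1$ to span $\mathfrak{h}'\cap\mathfrak{t}'$, and then shows that any $\mathfrak{h}'$ satisfying (1)--(3) is spanned by $u_1+av_1$, $u_2+bv_2$, $u_3+abv_3$, with the closure relation $[u_2+bv_2,u_3+abv_3]=u_1+ab^2v_1$ forcing $b=1$. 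Thus the only freedom left is the choice of $v_2$ in a circle inside $\mathfrak{g}_2\cap\mathfrak{t}'^{\perp}$, and $\mathrm{Ad}(\exp tv_1)$ rotates that circle transitively. Your route reaches the identical conclusion via complexification: the $\mathrm{ad}(H_0)$-invariance of $\mathfrak{h}'_1$ forces the resonance $|\lambda_1|=|\lambda_2|$ and parameterizes the candidates by $[z_1:z_2]\in\mathbb{CP}^1$; the subalgebra condition fixes $|z_2/z_1|$ (this is your analogue of the paper's $b=1$), and the torus acts transitively on the remaining phase. The paper's argument is slightly more elementary and avoids complexification; yours makes the representation-theoretic structure (eigenspaces, resonance) more transparent and would generalize more readily. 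Either way the substance is the same: the modulus of the diagonal embedding is rigid, only a phase circle remains, and $\exp\mathfrak{t}'$ sweeps it out.
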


\begin{proof}
We first give a definition. For a compact Lie algebra of type $A_1$ endowed with a bi-invariant inner product, we call an orthogonal basis $\{u_1,u_2,u_3\}$ standard, if all the basis vectors have the same length, and
they satisfy the condition  $[u_i,u_j]=u_k$ for $(i,j,k)=(1,2,3)$, $(2,3,1)$
or $(3,1,2)$. The length $c$ of each $u_i$ is a constant which only depends
on the scale of the bi-invariant inner product. The bracket $u'_3=[u'_1,u'_2]$ of any two
orthogonal vectors with length $c$ is also a vector with  length $c$, and
$\{u'_1,u'_2,u'_3\}$ is a standard basis as well.

Now we go back to the proof. Let $c_1$ and $c_2$ be the length of
the standard basis vectors for $\mathfrak{g}_1$ and $\mathfrak{g}_2$,
respectively. Then we can choose standard bases $\{u_1,u_2,u_3\}$
and $\{v_1,v_2,v_3\}$ for $\mathfrak{g}_1$ and $\mathfrak{g}_2$, respectively,  as follows.
First, we choose vectors $u_1$ and $v_1$ from $\mathfrak{t}'\cap\mathfrak{g}_1$ and $\mathfrak{t}'\cap\mathfrak{g}_2$
with length $c_1$ and $c_2$, respectively. Then we freely choose any vectors
$u_2$ of length $c_1$ from $\mathfrak{t}'^\perp\cap\mathfrak{g}_1$ and set
$u_3=[u_1,u_2]$. By (2) and (3) in the lemma, we can find a vector of
$\mathfrak{h}'$ from $u_2+\mathfrak{g}_2\cap\mathfrak{t}'^\perp$. Then its
$\mathfrak{g}_2$-factor is not $0$, which can be positively scaled to a vector $v_2$ with
the length $c_2$. Then $v_1, v_2, v_3=[v_1,v_2]$ form a standard basis for $\mathfrak{g}_2$.

Now suppose $\mathfrak{h}'$ is linearly spanned by $u_1+av_1$, $u_2+bv_2$
and their bracket can be expressed as $$[u_1+av_1,u_2+bv_2]=u_3+ab v_3,$$ where $a$ is a fixed nonzero constant and
$b>0$. As a Lie algebra, $\mathfrak{h}'$ contains $[u_2+bv_2,u_3+abv_3]=u_1+ab^2 v_1$, hence
$b=1$.

With $\mathfrak{h}'$ changed to $\mathfrak{h}''$,
the same argument above can also give standard bases $\{u'_1,u'_2,u'_3\}$
and $\{v'_1,v'_2,v'_3\}$ for $\mathfrak{g}_1$ and $\mathfrak{g}_2$, respectively, such that $u'_i=u_i$ for each $i$, and $v'_1=v_1$. Then it is
easy to see that there exists a real number $t$ such that
$\mathrm{Ad}(\exp(tv_1))$ maps $v_2$ to $v'_2$, $v_3$ to $v'_3$, and keep $v_1$ and all the vectors $u_i$ unchanged. So it maps $\mathfrak{h}'$ isomorphically
to $\mathfrak{h}''$.
\end{proof}

By Lemma \ref{diagonal-A-1-conjugation-lemma}, it is easy to see that,
up to the
$\mathrm{Ad}(\exp\mathfrak{t})$-actions which preserve all the roots and
root planes of $\mathfrak{g}$,
$\mathfrak{h}_{\pm\alpha'}$ is uniquely determined. So $\mathfrak{h}$ is conjugate to
the  standard subalgebra $\mathfrak{sp}(2)$ in $\mathfrak{su}(4)$ which makes
$G/H$ a symmetric space. Since $A_3=D_3$, $G/H$ is equivalent
to the standard Riemannian sphere $S^5=\mathrm{SO}(6)/\mathrm{SO}(5)$ with constant positive curvature.

In this subcase, we can also directly prove that
$G/H$ is a symmetric homogeneous space, that is,  $[\mathfrak{m},\mathfrak{m}]\subset\mathfrak{h}$, and then apply the classification of symmetric homogeneous spaces to get the classification. However, this
argument is not valid for some other subcases below.

{\bf Subcase 2.}\quad $n=4$, and $\alpha=e_1-e_4$, $\beta=e_3-e_2$.

In this case, we have $\mathfrak{t}\cap\mathfrak{m}=\mathbb{R}(e_1+e_2-e_3-e_4)$, and it is easily seen that
$\alpha'=\frac12(e_1-e_2+e_3-e_4)$ is a unit root of $\mathfrak{h}$.
Notice that
$\mathrm{pr}_\mathfrak{h}(e_i-e_5)$, $1\leq i\leq 4$,  can not be a root of $\mathfrak{h}$
 since it is not orthogonal to $\alpha'$ and its length is $\frac{\sqrt{7}}2$. Thus any root of $\mathfrak{h}$ must be of the form
 $\mathrm{pr}_\mathfrak{h}(e_i-e_j)$ with $1\leq i<j\leq 4$.
 A similar argument as in Subcase 1 then shows that the root system of $\mathfrak{h}$ is of type $B_2=C_2$, i.e., up to the $\mathrm{Ad}(\mathrm{SU}(4))$-actions,
$\mathfrak{h}=\mathbb{R}(e_1+e_2+e_3+e_4-4e_5)\oplus\mathfrak{h}'$, where $\mathfrak{h}'$ is
the standard subalgebra $\mathfrak{sp}(2)$ in $\mathfrak{su}(4)$ corresponding to $e_i$
with $1\leq i\leq 4$. So $G/H$ is equivalent to the Berger's space
$\mathrm{SU}(5)/\mathrm{Sp}(2)\mathrm{U}(1)$, which admits positive curved normal homogeneous (Riemannian) metrics.

{\bf Subcase 3.}\quad $n>4$, and $\alpha=e_1-e_4$, $\beta=e_3-e_2$.

We have  $\mathfrak{t}\cap\mathfrak{m}=\mathbb{R}(e_1+e_2-e_3-e_4)$, and it is easily seen that
$\alpha'=\frac12(e_1-e_2+e_3-e_4)$ is a unit root of $\mathfrak{h}$.
Then it is easy to check that the roots $\gamma_1=e_1-e_5$ and $\gamma_2=e_2-e_6$
satisfy the conditions (1)-(4) of Lemma \ref{key-lemma-2}, hence the corresponding coset space does not admit any invariant reversible Finsler metric with positive flag curvature.

\subsection{The case $\mathfrak{g}=B_n$ with $n>1$}

We only need to consider the following subcases.

{\bf Subcase 1.}\quad $\alpha=e_1+e_2$, $\beta=e_2$.

In this case,  $\mathfrak{t}\cap\mathfrak{m}=\mathbb{R}e_1$ and $\alpha'=e_2$ is a root
of $\mathfrak{h}$,  with
$$\mathfrak{h}_{\pm e_2}\subset\hat{\mathfrak{g}}_{\pm e_2}=\mathfrak{g}_{\pm(e_2-e_1)}+\mathfrak{g}_{\pm e_2}+\mathfrak{g}_{\pm (e_2+e_1)}.$$
Denote $\mathfrak{g}'=\mathbb{R}e_1+\mathbb{R}e_2+
\sum_{a,b}\mathfrak{g}_{\pm(a e_1+b e_2)}$ and
$\mathfrak{g}''=\mathbb{R}e_1+\mathfrak{g}_{\pm e_1}$.
Then $\mathfrak{g}'$, $\mathfrak{g}''$ are Lie algebras of types
$B_2=\mathfrak{so}(5)$ and $A_1$, respectively.
The subalgebra $\mathfrak{h}\cap\mathfrak{g}'$ of type $A_1$ is linearly spanned by
$$
u=\left(
    \begin{array}{ccccc}
      0 & 0 & 0 & 0 & 0 \\
      0 & 0 & 0 & 0 & 0 \\
      0 & 0 & 0 & 0 & 0 \\
      0 & 0 & 0 & 0 & -1 \\
      0 & 0 & 0 & 1 & 0 \\
    \end{array}
  \right)\in\mathbb{R}e_2,\quad
v=\left(
    \begin{array}{ccccc}
      0 & 0 & 0 & -a & -a' \\
      0 & 0 & 0 & -b & -b' \\
      0 & 0 & 0 & -c & -c' \\
      a & b & c & 0 & 0 \\
      a' & b' & c' & 0 & 0 \\
    \end{array}
  \right)\in\mathfrak{h}_{\pm e_2},
$$
and
$$
w=[u,v]=\left(
          \begin{array}{ccccc}
            0 & 0 & 0 & a' & -a \\
            0 & 0 & 0 & b' & -b \\
            0 & 0 & 0 & c' & -c \\
            -a' & -b' & -c' & 0 & 0 \\
            a & b & c & 0 & 0 \\
          \end{array}
        \right),
$$
in which $(a,b,c,a',b',c')$ is a nonzero vector in $\mathbb{R}^6$.
Since  $[v,w]\in\mathfrak{h}\cap\mathfrak{g}'$,
$(a,b,c)$ and $(a',b',c')$ are linearly dependent vectors. Using a suitable
isomorphism $l\in\mathrm{Ad}(\exp\mathfrak{g}'')$ of $\mathfrak{g}$, we can make $b=c=b'=c'=0$, i.e.,  up to equivalence, we can assume that
$\mathfrak{h}_{\pm e_2}=\mathfrak{g}_{\pm e_2}$. Thus
$\mathfrak{g}_{\pm (e_2\pm e_1)}\in\mathfrak{m}$.

By Lemma \ref{key-lemma-1},
any root $\pm e_i\pm e_j$ of $\mathfrak{g}$ with $1<i<j$ must be a root of $\mathfrak{h}$ and we have $\mathfrak{h}_{\pm(e_i\pm e_j)}=\mathfrak{g}_{\pm(e_i\pm e_j)}=\hat{\mathfrak{g}}_{\pm(e_i\pm e_j)}$. By the linear isomorphism  $\mathrm{ad}(w)$ between $\hat{\mathfrak{g}}_{\pm e_2}$ and $\hat{\mathfrak{g}}_{\pm e_i}$,
for any nonzero vector $w\in\mathfrak{g}_{\pm(e_2-e_i)}$ with $i>2$, we have $\mathfrak{g}_{\pm e_i}\subset\mathfrak{h}$. Moreover, for any $i\geq 2$, we have
$\mathfrak{g}_{\pm(e_i\pm e_1)}\subset\mathfrak{m}$.
To summarize, we have
\begin{equation}\label{0009}
\mathfrak{m}=\mathbb{R}e_1+\mathfrak{g}_{\pm e_1}+\sum_{i=2}^n(\mathfrak{g}_{\pm(e_i+e_1)}+
\mathfrak{g}_{\pm(e_i-e_1)}).
\end{equation}

Let $\{u,u'\}$ be a bi-invariant orthonormal basis of $\mathfrak{g}_{\pm (e_1+e_2)}$ and choose a nonzero vector
$v$ from $\mathfrak{g}_{\pm (e_1-e_2)}$ such that
$\langle u',v\rangle_u^F=0$.
 Since the Minkowski norm $F|_{\mathfrak{g}_{\pm(e_1+e_2)}}$ is $\mathrm{Ad}(\exp(\mathbb{R}e_2))$-invariant, it coincides with the restriction of the bi-invariant inner product up to scalar changes.
So we have
\begin{equation}\label{3599}
\langle u',u\rangle_u^F=\langle[u,e_1],u\rangle_u^F=\langle[u,e_2],u\rangle_u^F=0.
\end{equation}

Now a direct calculation shows that
\begin{equation*}
[u,\mathfrak{m}]_\mathfrak{m}\subset
\mathbb{R}[e_1,u]+\mathbb{R}e_1\subset\mathbb{R}u'
+\hat{\mathfrak{g}}_{0}.
\end{equation*}
So by Lemma \ref{lemma-3-6},
$$\langle v,\hat{\mathfrak{g}_0}\rangle_u^F=\langle u,\hat{\mathfrak{g}}_0\rangle_u^F=0.$$
By our assumptions on $u$ and $v$, we have
\begin{equation}\label{3590}
\langle[u,\mathfrak{m}]_\mathfrak{m},u\rangle_u^F
=\langle\mathbb{R}u',u\rangle_u^F = 0,
\end{equation}
and
\begin{equation}\label{3601}
\langle[u,\mathfrak{m}]_\mathfrak{m},v\rangle_u^F
=\langle\mathbb{R}u',u\rangle_u^F+
\langle\hat{\mathfrak{g}}_0,v\rangle_u^F=0.
\end{equation}

Since $e_2\in\mathfrak{h}$,
by Theorem 1.3 of \cite{DH2004}, we have
\begin{equation}\label{3602}
\langle[e_2,v],u\rangle^{F}_u
	=-\langle[e_2,u],v\rangle^{F}_u-2C_u^{F}(u,v,[e_2,u]).
\end{equation}
By (\ref{3599}), the first term of the right side of above equation vanishes. By the property of Cartan tensor, $C^F_u(u,\cdot,\cdot)\equiv 0$, so the second term also vanishes.
Thus we have
\begin{equation}\label{3603}
\langle [e_1,v],u\rangle_u^F=\langle[e_2,v],u\rangle_u^F=0.
\end{equation}
A direct calculation then shows that
$$[v,\mathfrak{m}]_\mathfrak{m}\subset \mathbb{R}[e_1,v]+\hat{\mathfrak{g}}_0.$$
So by Lemma \ref{lemma-3-6} and (\ref{3603}), we have
\begin{equation}\label{3604}
\langle[v,\mathfrak{m}]_\mathfrak{m},u\rangle_u^F=
\langle\mathbb{R}[e_1,v], u\rangle_u^F+\langle\hat{\mathfrak{g}}_0,u\rangle_u^F=0.
\end{equation}

 Taking the summation of (\ref{3590}), (\ref{3601}) and (\ref{3604}), we get $U(u,v)=0$. Hence by Theorem \ref{flag-curvature-formula-thm}, we have $K^F(o,u,u\wedge v)=0$. Therefore the corresponding coset space does not admit any invariant reversible Finsler metric with positive flag curvature.

{\bf Subcase 2.}\quad $\alpha=e_1+e_2$, $\beta=e_2-e_1$.

This subcase has been covered by Subcase 1.

{\bf Subcase 3.}\quad $n=4$, and $\alpha=e_1+e_2$,  $\beta=-e_3-e_4$.

In this case, we have $\mathfrak{t}\cap\mathfrak{m}=\mathbb{R}(e_1+e_2+e_3+e_4)$, and it is easily seen that
$\alpha'=\frac12(e_1+e_2-e_3-e_4)$ is a root of $\mathfrak{h}$ with
$\mathfrak{h}_{\pm\alpha'}\subset\hat{\mathfrak{g}}_{\pm\alpha'}
=\mathfrak{g}_{\pm(e_1+e_2)}+\mathfrak{g}_{\pm(e_3+e_4)}$. The argument here is very similar to Subcase 1 for $A_n$. Obviously
$\mathfrak{h}_{\pm\alpha'}$ is not a root plane of $\mathfrak{g}$. By Lemma \ref{key-lemma-1}, if $1\leq i<j\leq 4$,
then the root $e_i-e_j$ of $\mathfrak{g}$ is also a root of $\mathfrak{h}$ with
$\mathfrak{h}_{\pm(e_i-e_j)}=\mathfrak{g}_{\pm(e_i-e_j)}=
\hat{\mathfrak{g}}_{\pm(e_i-e_j)}$. Using the action $\mathrm{ad}(u)$,
one easily shows that for any non zero vector $u\in\mathfrak{g}_{\pm(e_i-e_j)}\subset\mathfrak{h}$, with $(i,j)=(2,3)$ or $(2,4)$, both $\beta'=\frac12(e_1+e_3-e_2-e_4)$ and
$\gamma'=\frac12(e_1+e_4-e_2-e_3)$ are also roots of $\mathfrak{h}$.
Hence  $\mathfrak{h}$ is of type $B_3$, and it is uniquely  determined by
the choice of $\mathfrak{h}_{\pm\alpha'}$. By Lemma \ref{diagonal-A-1-conjugation-lemma}, up to the $\mathrm{Ad}(G)$-action,
we can assume $\mathfrak{h}$  to be the standard subalgebra such that the pair $(\mathfrak{g},\mathfrak{h})$ defines the
homogeneous Finsler sphere
$S^{15}=\mathrm{Spin}(9)/\mathrm{Spin}(7)$. So in this subcase $(G/H,F)$ must be equivalent to the homogeneous sphere $S^{15}=\mathrm{Spin}(9)/\mathrm{Spin}(7)$ on which there exist positively curved homogeneous Riemannian metrics.

{\bf Subcase 4.} $n>4$, and $\alpha=e_1+e_2$, $\beta=-e_3-e_4$.

 We have $\mathfrak{t}\cap\mathfrak{m}=\mathbb{R}(e_1+e_2+e_3+e_4)$, and it is easily seen that $\alpha'=\frac12(e_1+e_2-e_3-e_4)$ is a unit root of $\mathfrak{h}$.
Then the roots $\gamma_1=e_1+e_5$
and $\gamma_2=e_1-e_5$  satisfy (1)-(4) of Lemma \ref{key-lemma-2}. Hence the corresponding coset space does not admit any invariant reversible Finsler metric with positive flag curvature.

{\bf Subcase 5.} $n=3$, and $\alpha=e_1+e_2$, $\beta=-e_3$.

It is easily seen that $\mathfrak{t}\cap\mathfrak{m}=\mathbb{R}(e_1+e_2+e_3)$ and $\alpha'=\frac13(e_1+e_2-2e_3)$ is a root of $\mathfrak{h}$.
The argument here
is similar to that of Subcase 3. By Lemma \ref{key-lemma-1} and Lemma \ref{trick-lemma-0}, the root system of $\mathfrak{h}$ contains the
roots
$$\pm(e_i-e_j), \,\,1\leq i<j\leq 3, $$
 and$$\frac13(e_1+e_2+e_3)-e_i,\,\,1\leq i\leq 3.$$
The subalgebra $\mathfrak{h}$ is of type $\mathfrak{g}_2$,  and  is uniquely
determined by the choice of
$$\mathfrak{h}_{\pm\alpha'}\subset\hat{\mathfrak{g}}_{\pm\alpha'}=
\mathfrak{g}_{\pm(e_1+e_2)}+\mathfrak{g}_{\pm e_3}.$$
 By Lemma \ref{diagonal-A-1-conjugation-lemma},  up to the $\mathrm{Ad}(G)$-action, there exists a unique $\mathfrak{h}$, and the corresponding coset space is  the homogeneous sphere
$S^7=\mathrm{Spin}(7)/\mathrm{G}_2$. Notice that in this case
the isotropy action
is transitive, so any homogeneous Finsler metric on it must be Riemannian with positive constant curvature. Consequently in this subcase $(G/H,F)$ is equivalent to
the Riemannian homogeneous sphere $S^7=\mathrm{Spin}(7)/\mathrm{G}_2$ of positive constant curvature.

{\bf Subcase 6.}\quad  $n>3$, and $\alpha=e_1+e_2$,  $\beta=-e_3$.

In this case,  $\mathfrak{t}\cap\mathfrak{m}=\mathbb{R}(e_1+e_2+e_3)$ and
$\alpha'=\frac13(e_1+e_2-2e_3)$ is a root of $\mathfrak{h}$.
The roots  $\gamma_1=e_1+e_4$ and $\gamma_2=e_1-e_4$
satisfy the conditions (1)-(4) of Lemma \ref{key-lemma-2}. Hence the corresponding coset space does not admit any invariant reversible
Finsler metric with positive flag curvature.

{\bf Subcase 7.}\quad  $\alpha=e_1$, $\beta=e_2$.

In this case, $\mathfrak{t}\cap\mathfrak{m}=\mathbb{R}(e_1-e_2)$ and $\alpha'=\frac12(e_1+e_2)$ is a root of $\mathfrak{h}$.
By Lemma \ref{key-lemma-1}, $2\alpha'=e_1+e_2$
is also a root of $\mathfrak{h}$. Hence the corresponding coset space does not admit any invariant reversible Finsler metric with positive flag curvature.

{\bf Subcase 8.}\quad  $n=2$, and $\alpha=e_1+e_2$,  $\beta=-e_1$.

 In this case,  $\mathfrak{t}\cap\mathfrak{m}=\mathbb{R}(2e_1+e_2)$ and $\alpha'=-\frac15e_1+\frac25e_2$ is a root of $\mathfrak{h}$.
The subalgebra
$\mathfrak{h}$ is of type $A_1$, and  is uniquely  determined by the choice
of $\mathfrak{h}_{\pm\alpha'}$ in $\hat{\mathfrak{g}}_{\pm\alpha'}=\mathfrak{g}_{\pm(e_1+e_2)}+
\mathfrak{g}_{\pm e_1}$. By Lemma \ref{diagonal-A-1-conjugation-lemma},
 $G/H$ is uniquely determined up to equivalence, i.e., it is equivalent to  the Berger's space
$\mathrm{Sp}(2)/\mathrm{SU}(2)$. Hence there exists positively curved normal homogeneous Riemannian metrics on it.

{\bf Subcase 9.}\quad  $n>2$, and $\alpha=e_1+e_2$,  $\beta=-e_1$.

In this case,  $\mathfrak{t}\cap\mathfrak{m}=\mathbb{R}(2e_1+e_2)$ and $\alpha'=-\frac15 e_1+\frac25 e_2$ is
a root of $\mathfrak{h}$.
The roots  $\gamma_1=e_1+e_3$ and $\gamma_2=e_1-e_3$
satisfy (1)-(2) but does not satisfy (3) of Lemma \ref{key-lemma-2}, i.e.,
$\pm\gamma_1$ are the only roots of $\mathfrak{g}$ in $\mathbb{R}\gamma_1+\mathfrak{t}\cap\mathfrak{m}$,
and all the roots of $\mathfrak{g}$ in $\pm\gamma_2+\mathbb{R}\gamma_1+\mathfrak{t}\cap\mathfrak{m}$ are
$\pm\gamma_2=\pm(e_1-e_3)$ and $\pm\gamma_3=\pm e_2$.
Choosing $u$ and $v$ from $\mathfrak{g}_{\pm\gamma_1}$ and $\mathfrak{g}_{\pm\gamma_2}$ as in the proof for Lemma \ref{key-lemma-2},
 we can similarly get
\begin{equation}\label{3700}
\langle[u,\mathfrak{m}]_{\mathfrak{m}},u\rangle_u^F=\langle[v,\mathfrak{m}]_\mathfrak{m},u\rangle_u^F=0.
\end{equation}
Notice that $\gamma_1=e_1+e_3$ and $\gamma_3=e_2$ also satisfy
(1) of Lemma \ref{key-lemma-2}, i.e., $\gamma_1\pm\gamma_3$ are not
roots of $\mathfrak{g}$. This fact,   together with Lemma \ref{lemma-3-8}, implies that
\begin{equation}\label{3701}
\langle[u,\mathfrak{m}]_\mathfrak{m},v\rangle_u^F=0.
\end{equation}
Taking the summation of (\ref{3700}) and (\ref{3701}), we get $U(u,v)=0$. Thus by Theorem \ref{flag-curvature-formula-thm}, we have
$K^F(o,u,u\wedge v)=0$.  Hence there does not exists any invariant reversible Finsler metric on the corresponding coset space with positive flag curvature.

\subsection{The case $\mathfrak{g}=C_n$ with $n>2$}

We only need to consider the following subcases.

{\bf Subcase 1.}\quad  $\alpha=2e_1$, $\beta=e_1+e_2$.

 In this case, $\mathfrak{t}\cap\mathfrak{m}=\mathbb{R}(e_1-e_2)$ and $\alpha'=\beta=e_1+e_2$ is a root of $\mathfrak{h}$. Let $\mathfrak{t}'$ be the subalgebra of
$\mathfrak{t}\cap\mathfrak{h}$ spanned by $\{e_3,\ldots,e_n\}$, and $T'$ the
corresponding sub-torus in $T\cap H$. Then the  Lie algebra of $C_G(T')$ is
 $\mathfrak{t}'\oplus\mathfrak{g}''$, in which
 $\mathfrak{g}''$ is of type $B_2$. If the corresponding coset space can be positively curved,
 then Lemma \ref{totally-geodesic-lemma} implies that the   positively curved
 reversible homogeneous Finsler space $\mathrm{SO}(5)/\mathrm{SO}(3)$ should appear in Subcase 1 for $B_n$, which is a contradiction. Hence there does not exists any invariant reversible Finsler metric on the corresponding coset space with positive flag curvature.

{\bf Subcase 2.}\quad $\alpha=2e_1$, $\beta=2e_2$.

This subcase has been covered by the previous one.

{\bf Subcase 3.}\quad  $\alpha=2e_1$, $\beta=-e_2-e_3$.

In this case,
$\mathfrak{t}\cap\mathfrak{m}=\mathbb{R}(2e_1+e_2+e_3)$ and
$\alpha'=\frac23 e_1-\frac23 e_2-\frac23 e_3$ is a root of $\mathfrak{h}$.
Then the roots $\gamma_1=2e_2$ and $\gamma_2=2e_3$ satisfy the conditions  (1)-(4) of Lemma \ref{key-lemma-2}.  Hence there does not exists any invariant reversible Finsler metric on the corresponding coset space with positive flag curvature.

{\bf Subcase 4.}\quad  $\alpha=e_1+e_2$, $\beta=e_1-e_2$.

In this case,  $\mathfrak{t}\cap\mathfrak{m}=\mathbb{R}e_2$ and $\alpha'=e_1$ is
a root of $\mathfrak{h}$. By Lemma \ref{key-lemma-1}, $2\alpha'=2e_1$
is also a root of $\mathfrak{h}$. Hence there does not exists any invariant reversible Finsler metric on the corresponding coset space with positive flag curvature.

{\bf Subcase 5.}\quad  $\alpha=e_1+e_2$, $\beta=-e_3-e_4$.

In this case,
$\mathfrak{t}\cap\mathfrak{m}=\mathbb{R}(e_1+e_2+e_3+e_4)$
and $\alpha'=\frac12(e_1+e_2-e_3-e_4)$ is a root of $\mathfrak{h}$.
Then the  roots  $\gamma_1=2e_1$ and $\gamma_2=2e_2$ satisfy (1)-(4) of Lemma \ref{key-lemma-2}.
Hence there does not exists any invariant reversible Finsler metric on the corresponding coset space with positive flag curvature.

{\bf Subcase 6}\quad  $n>2$, and $\alpha=2e_1$, $\beta=-e_1-e_2$.

In this case,  $\mathfrak{t}
\cap\mathfrak{m}=\mathbb{R}(3e_1+e_2)$ and $\alpha'=\frac15 e_1-\frac35 e_2$
is a root of $\mathfrak{h}$. Then the roots
$\gamma_1=e_1+e_3$ and $\gamma_2=2e_2$ satisfy the conditions (1)-(4) of Lemma \ref{key-lemma-2}.
Hence there does not exists any invariant reversible Finsler metric on the corresponding coset space with positive flag curvature.

\subsection{The case $\mathfrak{g}=D_n$ with $n>3$}

We only need to consider the following subcases.

{\bf Subcase 1.} $\alpha=e_1+e_2$, $\beta=e_2-e_1$.

In this case, $\mathfrak{t}\cap\mathfrak{m}=\mathbb{R}e_1$ and $\alpha'=e_2$ is a root of $\mathfrak{h}$.
Then  we can apply Lemma \ref{key-lemma-1},
Lemma \ref{trick-lemma-0} and a similar argument as in  Subcase 1 for $A_n$ (which in fact is a special situation of this subcase), to show that $\mathfrak{h}$ is of type $B_{n-1}$ with all the roots given
by $$\pm e_i\pm e_j\mbox{ for }1<i<j\leq n\mbox{ and }\pm e_i\mbox{ for } 1<i\leq n.$$
Using Lemma \ref{diagonal-A-1-conjugation-lemma}, we can show that, up to $\mathrm{Ad}(G)$-actions, $\mathfrak{h}$ is
the  standard subalgebra such that the homogeneous Finsler space $(G/H,F)$ is equivalent to the Riemannian symmetric
sphere $\mathrm{SO}(2n)/\mathrm{SO}(2n-1)$ of positive constant curvature.

{\bf Subcase 2.} $n\ge 4$, and $\alpha=e_1+e_2$, $\beta=-e_3-e_4$.

First notice that  $D_4$ has outer automorphisms. So the argument
in the above subcase can be applied to this case. If $n>4$,
then
$\mathfrak{t}\cap\mathfrak{m}=\mathbb{R}(e_1+e_2+e_3+e_4)$ and
$\alpha'=\frac12(e_1+e_2-e_3-e_4)$ is a root of $\mathfrak{h}$ with  length $1$. Then the roots $\gamma_1=e_1+e_5$ and $\gamma_2=e_1-e_5$ satisfy the conditions
(1)-(4) of Lemma \ref{key-lemma-2}. Hence there does not exists any invariant reversible Finsler metric on the corresponding coset space with positive flag curvature.

\section{Case III: the exceptional groups and summary}

We continue  the case by case discussion of the last section, and summarize all the results of
these two sections as a theorem at the end, which is one of the main results of this paper.

\subsection{The case $\mathfrak{g}=E_6$}

Without losing generality, we can  assume that the orthogonal pair of the roots $\alpha$ and $\beta$
  are of the form $\pm e_i\pm e_j$. Up to the Weyl group action induced by $D_5$, there are two subcases: (1) $\alpha=e_1+e_2$ and $\beta=e_2-e_1$; (2)
$\alpha=e_1+e_2$ and $\beta=-e_3-e_4$.
Using the outer automorphisms of $E_6$ as well as the Weyl group action, the second subcase can be reduced to the first one. So we can assume that $\alpha=e_1+e_2$ and $\beta=e_2-e_1$. Then $\mathfrak{t}\cap\mathfrak{m}=\mathbb{R}e_1$, and $\alpha'=e_2$ is a root
of $\mathfrak{h}$. Then the roots
$$
\gamma_1=
-\frac12 e_1+\frac12 e_2+\frac 12 e_3+\frac 12 e_4+\frac12 e_5+\frac{\sqrt{3}}{2}e_6,
 $$
  and
$$\gamma_2= -\frac12 e_1-\frac12 e_2-\frac 12 e_3-\frac 12 e_4-\frac12 e_5+\frac{\sqrt{3}}{2}e_6$$
 satisfy the conditions (1)-(4) of Lemma \ref{key-lemma-2}. Hence there does not exists any invariant reversible Finsler metric on the corresponding coset space with positive flag curvature.

\subsection{The case $\mathfrak{g}=E_7$}

Given  an orthogonal pair of roots $\alpha$ and $\beta$ of $\mathfrak{g}$, we can  use certain
Weyl group action to change $\beta$ to $\sqrt{2}e_7$. Since $\beta$ is orthogonal to $\alpha$, $\alpha$ must be then of  the form $\pm e_i\pm e_j$ with $1\leq i<j\leq 6$. Using Weyl group actions induced by $D_6$, we can change $\alpha$ to  $e_1+e_2$ while keeping $\beta=\sqrt{2}e_7$ fixed.
So essentially there is
only one subcase, namely,  $\alpha=e_1+e_2$ and $\beta=e_2-e_1$. Then $\mathfrak{t}\cap
\mathfrak{m}=\mathbb{R}e_1$ and $\alpha'=e_2$ is a root of $\mathfrak{h}$.
Now the pair of roots
$$
\gamma_1= -\frac12 e_1+\frac12 e_2+\frac12 e_3+\frac12 e_4
+\frac12 e_5+\frac12 e_6 +\frac{\sqrt{2}}{2}e_7,$$
 and
$$\gamma_2= \frac12 e_1-\frac12 e_2-\frac12 e_3-\frac12 e_4
+\frac12 e_5+\frac12 e_6 +\frac{\sqrt{2}}{2}e_7$$
 satisfy the conditions (1)-(4) of Lemma \ref{key-lemma-2}. Hence there does not exists any invariant reversible Finsler metric on the corresponding coset space with positive flag curvature.

\subsection{The case $\mathfrak{g}=E_8$}

 Up to the  Weyl group action, we can assume that $\alpha$ and $\beta$ are of the form $\pm e_i\pm e_j$. We only need to consider the following two
subcases.

{\bf Subcase 1.}\quad  $\alpha=e_1+e_2$,  $\beta=e_2-e_1$.

In this case,  $\mathfrak{t}\cap\mathfrak{m}=\mathbb{R}e_1$ and $\alpha'=e_2$ is
a root of $\mathfrak{h}$. The the pair of roots
$$
\gamma_1 = \frac12 e_1+\frac12 e_2+\frac12 e_3+\frac12 e_4+
\frac12 e_5+\frac12 e_6+\frac12 e_7+\frac12 e_8,$$
 and
$$\gamma_2=-\frac12 e_1-\frac12 e_2-\frac12 e_3-\frac12 e_4+
\frac12 e_5+\frac12 e_6+\frac12 e_7+\frac12 e_8$$
 satisfy the conditions (1)-(4) of Lemma \ref{key-lemma-2}. Hence there does not exists any invariant reversible Finsler metric on the corresponding coset space with positive flag curvature.

{\bf Subcase 2.} $\alpha=e_1+e_2$ and $\beta=-e_3-e_4$.

In this case,  $\mathfrak{t}\cap\mathfrak{m}=\mathbb{R}(e_1+e_2+e_3+e_4)$ and
$\alpha'=\frac12(e_1+e_2-e_3-e_4)$ is a root of $\mathfrak{h}$. Then the pair of roots
$\gamma_1=e_1+e_5$ and $\gamma_2=e_2+e_6$ satisfy the conditions (1)-(4) of
Lemma \ref{key-lemma-2}. Hence there does not exists any invariant
reversible Finsler metric on the corresponding coset space with positive flag curvature.

\subsection{The case $\mathfrak{g}=F_4$}

Notice that up to the Weyl group action, any short root of $F_4$ can be changed to $e_1$. This implies that  any orthogonal pair of short roots of $F_4$ can be changed to the pairs $e_1$ and $-e_2$.
On the other hand, using the reflections induced by the roots $\frac12(\pm e_1\pm e_2\pm e_3\pm e_4)$, any orthogonal pair of long roots can be changed to the pair
$e_1\pm e_2$. Hence we only need to consider the following subcases.

{\bf Subcase 1.}\quad  $\alpha=e_1+e_2$, $\beta=e_2$.

In this case,  $\mathfrak{t}\cap\mathfrak{m}=\mathbb{R}e_1$ and $\alpha'=e_2$ is a root of
$\mathfrak{h}$. Let $\mathfrak{t}'$ be the subalgebra of $\mathfrak{t}\cap\mathfrak{h}$ spanned by $e_3$ and $e_4$, and $T'$ be
the closed sub-torus in $T\cap H$ with $\mathrm{Lie}(T')=\mathfrak{t}'$.
Then applying Lemma \ref{totally-geodesic-lemma} to $T'$, we conclude that there should be  a positively curved reversible
homogeneous Finsler space $\mathrm{SO}(5)/\mathrm{SO}(3)$  in Subcase 1 for $B_n$,
which is a contradiction. Hence there does not exists any invariant reversible
Finsler metric on the corresponding coset space with positive flag curvature.

{\bf Subcase 2.}\quad  $\alpha=e_1+e_2$, $\beta=e_2-e_1$.

This subcase has
been covered by the previous one.

{\bf Subcase 3.}\quad $\alpha=e_1+e_2$, $\beta=-e_3$.

In this case, $\mathfrak{t}\cap
\mathfrak{m}=\mathbb{R}(e_1+e_2+e_3)$ and $\alpha'=\frac13 e_1+\frac13 e_2-
\frac23 e_3$ is a root of $\mathfrak{h}$ with length $\sqrt{\frac23}$, with $\mathfrak{h}_{\pm\alpha'}\subset
\hat{\mathfrak{g}}_{\pm\alpha'}=\mathfrak{g}_{\pm(e_1+e_2)}+\mathfrak{g}_{\pm e_3}$. By Lemma \ref{key-lemma-1}, $\pm e_4$ are roots of $\mathfrak{h}$,
and $\mathfrak{h}_{\pm e_4}=\mathfrak{g}_{\pm e_4}=\hat{\mathfrak{g}}_{\pm e_4}$. Notice that $\mathrm{pr}_\mathfrak{h}(e_4-e_3)$ is not
orthogonal to $\alpha'$ and has  length $\sqrt{\frac53}$.
So $\mathrm{pr}_\mathfrak{h}(e_4-e_3)$ is not a root of $\mathfrak{h}$. Thus
$\mathfrak{g}_{\pm(e_4-e_3)}\subset\mathfrak{m}$. Therefore we have
$$\mathfrak{g}_{\pm e_3}=[\mathfrak{g}_{\pm e_4},\mathfrak{g}_{\pm (e_4-e_3)}]\subset\mathfrak{m}.$$
Hence $\mathfrak{h}_{\pm\alpha'}=\mathfrak{g}_{\pm(e_1+e_2)}$.
Then we have
$$\alpha'=\frac13 e_1+\frac13 e_2-\frac23 e_3
\subset[\mathfrak{h}_{\pm\alpha'},\mathfrak{h}_{\pm\alpha'}]
=[\mathfrak{g}_{\pm(e_1+e_2)},\mathfrak{g}_{\pm(e_1+e_2)}]=\mathbb{R}(e_1+e_2),$$
which is  a contradiction. Hence there does not exists any invariant
reversible Finsler metric on the corresponding coset space with positive flag curvature.

{\bf Subcase 4.}\quad  $\alpha=e_1$, $\beta=-e_2$.

 In this case, $\mathfrak{t}\cap\mathfrak{m}=\mathbb{R}(e_1+e_2)$ and $\alpha'=\frac12(e_1-e_2)$ is a root of $\mathfrak{h}$. By Lemma \ref{key-lemma-2}, $e_1-e_2=2\alpha'$ is also a root of $\mathfrak{h}$,
which is a contradiction. Hence there does not exists any invariant reversible Finsler metric on the corresponding coset space with positive flag curvature.

{\bf Subcase 5.}\quad  $\alpha=e_1+e_2$, $\beta=-e_2$.

In this case,  $\mathfrak{t}\cap\mathfrak{m}=\mathbb{R}(e_1+2e_2)$,
and $\alpha'=\frac25 e_1-\frac15 e_2$ is a root of $\mathfrak{h}$
of length $\sqrt{\frac15}$, with $\mathfrak{h}_{\pm\alpha'}\subset
\hat{\mathfrak{g}}_{\pm\alpha'}=\mathfrak{g}_{\pm(e_1+e_2)}+\mathfrak{g}_{\pm e_2}$.  By Lemma \ref{key-lemma-1}, $e_3$ is a root of $\mathfrak{h}$ and
$\mathfrak{h}_{\pm e_3}=\mathfrak{g}_{\pm e_3}=\hat{\mathfrak{g}}_{\pm e_3}$. The vector $\mathrm{pr}_\mathfrak{h}(e_2+e_3)$ is
not a root of $\mathfrak{h}$, since it is not orthogonal to $\alpha'$ and its
length is $\sqrt{\frac65}$. So $\mathfrak{g}_{\pm(e_2+e_3)}\subset\mathfrak{m}$. Then we have
$$\mathfrak{g}_{\pm e_2}=[\mathfrak{g}_{\pm(e_2+e_3)},\mathfrak{g}_{\pm e_3}]\subset\mathfrak{m}.$$
This implies that $\mathfrak{h}_{\pm\alpha'}=\mathfrak{g}_{\pm(e_1+e_2)}$.
Then we can deduce a contradiction by a similar argument as in Subcase 3 of this section.

There is another way to deduce the contradiction. Let $\mathfrak{t}'=\mathbb{R}e_4$ and $T'$ be the corresponding closed one-parameter subgroup in $H$. Using Lemma \ref{totally-geodesic-lemma}, we get a positively curved reversible homogeneous Finsler space in Subcase 9 for $B_n$, which is impossible.

\subsection{The case $\mathfrak{g}=G_2$}

If the angle between $\alpha$ and $\beta$ is $\frac \pi 6$ or $\frac \pi 2$, we can
find a pair of  short roots $\alpha_1$ and $\beta_1$ of $\mathfrak{g}$,
such that the angle between $\alpha_1$ and $\beta_1$ is $\frac \pi 3$, and
$\alpha'=\mathrm{pr}_\mathfrak{h}({\alpha}_1)=
\mathrm{pr}_\mathfrak{h}({\beta}_1)$ is a root of $\mathfrak{h}$. This is  for a contradiction to Lemma \ref{lemma-999}.

Therefore we only need to consider the case that $\alpha$ is a long root, $\beta$ is a short root,
and the angle between them is $\frac{5\pi}6$,
$\alpha'=\mathrm{pr}_\mathfrak{h}(\alpha)=\mathrm{pr}_{\mathfrak{h}}
(\beta)$ is a root of $\mathfrak{h}$, and $\mathfrak{h}$ is of type $A_1$.
Let $\gamma_1=\alpha+3\beta$ and $\gamma_2=\alpha+\beta$. Select any two nonzero vectors
$u\in\mathfrak{g}_{\pm\gamma_1}$ and $v\in\mathfrak{g}_{\pm\gamma_2}$.
Then it is not hard to see that the long root $\gamma_1$ and the short root $\gamma_2$ are orthogonal to each other,
and none of $\gamma_1\pm\gamma_2$ is a root of $\mathfrak{g}$. So $u$ and $v$
are  linearly independent and commutative. Denote the anticlockwise rotation with angle $\theta$ as $R(\theta)$.
Then there exists $g\in T_H$, and suitable orthonormal bases for each of the subspaces of $\mathfrak{m}$ below, such that
\begin{eqnarray*}
\mathrm{Ad}(g)|_{\mathfrak{t}\cap\mathfrak{m}}&=&\mathrm{Id},\\
\mathrm{Ad}(g)|_{\hat{\mathfrak{g}}_{\pm\alpha'}\cap\mathfrak{m}}
&=& R(\pi/4),\\
\mathrm{Ad}(g)|_{\mathfrak{g}_{\pm(\alpha+\beta)}=
\hat{\mathfrak{g}}_{\pm 2\alpha'}}&=& R(\pi/2),\\
\mathrm{Ad}(g)|_{\mathfrak{g}_{\pm(\alpha+2\beta)}=
\hat{\mathfrak{g}}_{\pm 3\alpha'}}&=& R(3\pi/4),\\
\mathrm{Ad}(g)|_{\mathfrak{g}_{\pm(\alpha+3\beta)}=
\hat{\mathfrak{g}}_{\pm 4\alpha'}}&=& R(\pi)=-\mathrm{Id},\\
\mathrm{Ad}(g)|_{\mathfrak{g}_{\pm(2\alpha+3\beta)}=
\hat{\mathfrak{g}}_{\pm 5\alpha'}}&=& R(5\pi/4).
\end{eqnarray*}
Denote the above subspaces as $\mathfrak{m}_k$, $k=0,1,\ldots,5$, i.e.  the action of $\mathrm{Ad}(g)$ on $\mathfrak{m}_k$ is equal to $R(k\pi/4)$.
In particular,  $\mathfrak{m}_0=\mathfrak{t}\cap\mathfrak{m}$, $\mathfrak{m}_2=\mathfrak{g}_{\pm\gamma_2}$ and $\mathfrak{m}_4=\mathfrak{g}_{\pm\gamma_1}$. By Lemma \ref{lemma-3-6},
we have
\begin{equation}\label{3801}
\langle\mathfrak{m}_4,\mathfrak{m}_i\rangle_u^F=0, \quad\forall i\neq 4.
\end{equation}

For any $v'\in\mathfrak{m}_2$ and $w'\in\mathfrak{m}_i$ with $i\neq 2$, we have
\begin{eqnarray*}
\langle v',w'\rangle_u^F&=&
\langle\mathrm{Ad}(g)v',\mathrm{Ad}(g)w'\rangle_{\mathrm{Ad}(g)u}^F=
\langle R(\pi/2)v',R(i\pi/4)w'\rangle_{-u}^F\\
&=&
\langle R(\pi/2)v',R(i\pi/4)w'\rangle_{u}^F
=\langle R(\pi/2)^2 v', R(i\pi/4)^2 w'\rangle_u^F\\
&=&\langle -v',R(i\pi/2)w'\rangle_u^F =\langle v',R((i-2)\pi/2)w'\rangle_u^F.
\end{eqnarray*}
Using a  similar argument as in the proof of Lemma \ref{lemma-3-6}, we get
\begin{equation}\label{3800}
\langle \mathfrak{m}_2,\mathfrak{m}_i\rangle_u^F=0,\quad\forall i\neq 2.
\end{equation}
By the $\mathrm{Ad}(T_H)$-invariance, the Minkowski norm $F|_{\mathfrak{m}_4}$ coincides with
the restriction of the bi-invariant inner product up to a scalar change. Thus
\begin{equation}\label{3799}
\langle[u,\mathfrak{t}],u\rangle_u^F=0.
\end{equation}
Now a direct calculation shows that
\begin{equation*}
[u,\mathfrak{m}]_{\mathfrak{m}}\subset \mathfrak{m}_0+ \mathfrak{m}_1+\mathfrak{m}_3+[u,\mathfrak{t}]+\mathfrak{m}_5,
\end{equation*}
and
\begin{equation*}
[v,\mathfrak{m}]_{\mathfrak{m}}\subset \mathfrak{m}_0+\mathfrak{m}_1+\mathfrak{m}_2+
\mathfrak{m}_3+\mathfrak{m}_5.
\end{equation*}
So by (\ref{3801}), (\ref{3800}) and (\ref{3799}), we have
\begin{equation}
\langle[u,\mathfrak{m}]_\mathfrak{m},u\rangle_u^F=
\langle[u,\mathfrak{m}]_\mathfrak{m},v\rangle_u^F=
\langle[v,\mathfrak{m}]_\mathfrak{m},u\rangle_u^F=0.
\end{equation}
Hence $U(u,v)=0$. Then by Theorem \ref{flag-curvature-formula-thm}, we get
$K^F(o,u,u\wedge v)=0$, which is a contradiction. Hence there does not exists any invariant reversible Finsler metric on the corresponding coset space with positive flag curvature.

\subsection{Summary}
We now summarize all the results in Section 4 and Section 5 as the following theorem, which gives a complete classification of  odd dimensional positively curved reversible homogeneous Finsler spaces in Case III.

\begin{theorem} \label{mainthm-part-1}
Let $(G/H,F)$ be an odd dimensional positively curved reversibly homogeneous Finsler space of Case III, i.e., with respect to a bi-invariant orthogonal
decomposition $\mathfrak{g}=\mathfrak{h}+\mathfrak{m}$ for the compact Lie algebra $\mathfrak{g}$, and a fundamental Cartan subalgebra $\mathfrak{t}$,
there are roots $\alpha$ and $\beta$ of $\mathfrak{g}$ from the same simple factor, such that $\alpha\neq\pm\beta$ and
$\mathrm{pr}_\mathfrak{h}(\alpha)=\mathrm{pr}_\mathfrak{h}(\beta)=\alpha'$
is a root of $\mathfrak{h}$.
Then $(G/H,F)$ is equivalent to one of the following
the homogeneous Finsler spaces:
\begin{description}
\item{\rm (1)}\quad The odd dimensional Riemannian symmetric spheres
$S^{2n-1}=\mathrm{SO}(2n)/\mathrm{SO}(2n-1)$ with $n>2$;
\item{\rm (2)}\quad The homogeneous spheres $S^7=\mathrm{Spin}(7)/\mathrm{G}_2$ and
$S^{15}=\mathrm{Spin}(9)/\mathrm{Spin}(7)$;
\item{\rm (3)}\quad Berger's spaces $\mathrm{SU}(5)/\mathrm{Sp}(2)\mathrm{U}(1)$
and $\mathrm{Sp}(2)/\mathrm{SU}(2)$.
\end{description}
\end{theorem}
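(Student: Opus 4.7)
My plan is to prove Theorem \ref{mainthm-part-1} by reducing to the simple case and then performing a case-by-case analysis over the classification of compact simple Lie algebras, applying the key lemmas established in Section 3. The first step is to invoke Lemma \ref{lemma-4-1} so that we may assume $\mathfrak{g} = \mathrm{Lie}(G)$ is simple. After fixing a bi-invariant orthogonal decomposition and a fundamental Cartan subalgebra $\mathfrak{t}$, the condition of Case III supplies a pair of roots $\alpha \neq \pm\beta$ of the same simple factor with $\mathrm{pr}_\mathfrak{h}(\alpha) = \mathrm{pr}_\mathfrak{h}(\beta) = \alpha'$ a root of $\mathfrak{h}$. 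The vector $\alpha - \beta$ then spans $\mathfrak{t} \cap \mathfrak{m}$ (by Corollary \ref{rank-equality-corollary}), so the pair $(\alpha,\beta)$ controls the entire classification data.

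Next I would apply the angle restriction of Lemma \ref{lemma-999} to exclude pairs $(\alpha,\beta)$ with angle $\pi/3$ or $2\pi/3$ (and the small variant for $G_2$). Together with the Weyl group action (and, for $D_n$ and $E_6$, outer automorphisms) this dramatically cuts down the number of pairs $(\alpha,\beta)$ that need to be considered. For the classical algebras $A_n, B_n, C_n, D_n$, I would work in the standard coordinate presentation of the root system recalled in the appendix, list all remaining normal forms of the pair $(\alpha,\beta)$, and for each compute $\alpha' = \mathrm{pr}_\mathfrak{h}(\alpha)$ together with $\hat{\mathfrak{g}}_{\pm\alpha'}$. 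For the exceptional algebras $E_6, E_7, E_8, F_4, G_2$, symmetry considerations reduce the analysis to one or two subcases each.

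For each remaining subcase there are three possible outcomes. Either (a) I can exhibit a concrete linearly independent commutative pair $(u,v)$ of root vectors for which conditions (1)--(4) of Lemma \ref{key-lemma-2} are verified, producing a vanishing flag curvature and hence a contradiction; or (b) I can use Lemma \ref{totally-geodesic-lemma} to reduce to a sub-coset-space previously ruled out (a typical device for $C_n$, for one of the $F_4$ subcases, and for recurring appearances of $\mathrm{SO}(5)/\mathrm{SO}(3)$); or (c) I can use Lemma \ref{key-lemma-1} iteratively, together with the bracket isomorphisms of Lemma \ref{trick-lemma-0}, to pin down every root plane of $\mathfrak{h}$, then invoke Lemma \ref{diagonal-A-1-conjugation-lemma} to see that $\mathfrak{h}$ is unique up to $\mathrm{Ad}(G)$-conjugacy, yielding one of the surviving coset spaces $\mathrm{SO}(2n)/\mathrm{SO}(2n-1)$, $\mathrm{Spin}(7)/\mathrm{G}_2$, $\mathrm{Spin}(9)/\mathrm{Spin}(7)$, $\mathrm{SU}(5)/\mathrm{Sp}(2)\mathrm{U}(1)$, or $\mathrm{Sp}(2)/\mathrm{SU}(2)$.

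I expect the main obstacles to be twofold. First, in a handful of delicate subcases (notably $G_2$ itself, and the $B_n$ subcase with $\alpha = e_1 + e_2$, $\beta = e_2$) neither Lemma \ref{key-lemma-1} nor the quick form of Lemma \ref{key-lemma-2} is enough: there one must work directly with the flag curvature formula of Theorem \ref{flag-curvature-formula-thm}, exploit the $\mathrm{Ad}(T_H)$-invariance together with the reversibility of $F$ (crucial in Lemma \ref{lemma-3-6}) to obtain the orthogonality relations $\langle[u,\mathfrak{m}]_\mathfrak{m},u\rangle_u^F = \langle[u,\mathfrak{m}]_\mathfrak{m},v\rangle_u^F = \langle[v,\mathfrak{m}]_\mathfrak{m},u\rangle_u^F = 0$, and conclude $U(u,v) = 0$. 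Second, in the positive subcases one must verify that $\mathfrak{h}$ is completely determined, which requires tracking all the root planes generated from $\mathfrak{h}_{\pm\alpha'}$ via $\mathrm{ad}$-isomorphisms and then applying Lemma \ref{diagonal-A-1-conjugation-lemma} to dispose of the residual ambiguity among $A_1 \oplus A_1$ diagonals. Once every subcase is dealt with, assembling the surviving coset spaces yields exactly the list in the theorem.
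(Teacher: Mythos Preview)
Your proposal is correct and follows essentially the same approach as the paper: reduction to the simple case via Lemma \ref{lemma-4-1}, the angle restriction of Lemma \ref{lemma-999}, normalization of the pair $(\alpha,\beta)$ via Weyl group and outer automorphisms, and then a case-by-case analysis over the simple types using exactly the three devices you list (Lemma \ref{key-lemma-2} contradictions, totally geodesic reductions via Lemma \ref{totally-geodesic-lemma}, and the positive identification via Lemma \ref{key-lemma-1}, Lemma \ref{trick-lemma-0}, and Lemma \ref{diagonal-A-1-conjugation-lemma}). You have also correctly anticipated the two places where a direct computation with Theorem \ref{flag-curvature-formula-thm} is needed, namely the $B_n$ subcase $\alpha=e_1+e_2$, $\beta=e_2$ and the $G_2$ case; the paper handles these exactly as you describe.
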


\section{The Cases II and I}

In this section we will consider odd dimensional positively curved
reversible homogeneous Finsler spaces in Cases II and  I.

\subsection{The Case II}

Let $(G/H,F)$ be an odd dimensional positively curved reversible homogeneous
Finsler space in Case II, i.e., with respect to a bi-invariant orthogonal decomposition $\mathfrak{g}=\mathfrak{h}+\mathfrak{m}$ for the compact Lie algebra $\mathfrak{g}=\mathrm{Lie}(G)$ and a fundamental Cartan subalgebra $\mathfrak{t}$, there exists two roots $\alpha$ and
$\beta$ of $\mathfrak{g}$ from different simple factors such that
$\mathrm{pr}_\mathfrak{h}(\alpha)=\mathrm{pr}_{\mathfrak{h}}(\beta)=\alpha'$
is a root of $\mathfrak{h}$. In this situation $\alpha'$ is a linear combination of $\alpha$ and $\beta$ with two nonzero coefficients. Thus
$\mathfrak{h}_{\pm\alpha'}\subset
\hat{\mathfrak{g}}_{\pm\alpha'}=
\mathfrak{g}_{\pm\alpha}+\mathfrak{g}_{\pm\beta}$ can not be a root plane
of $\mathfrak{g}$, or equivalently,  $\mathfrak{g}_{\pm\alpha}$ and $\mathfrak{g}_{\pm\beta}$ are not contained in $\mathfrak{h}$ or $\mathfrak{m}$.

First of all, we can find a direct sum decomposition
$$\mathfrak{g}=\mathfrak{g}_1\oplus\cdots\oplus\mathfrak{g}_n\oplus\mathbb{R}^m,$$
such that each $\mathfrak{g}_i$ is a simple ideal of $\mathfrak{g}$, and
$\alpha$ and $\beta$ are roots of $\mathfrak{g}_1$ and $\mathfrak{g}_2$,
respectively. Since $\mathfrak{t}\cap\mathfrak{m}=\mathbb{R}(\alpha-\beta)
\subset\mathfrak{g}_1\oplus\mathfrak{g}_2$, the abelian factor of $\mathfrak{g}$ and $\mathfrak{t}\cap\mathfrak{g}_i$ for each $i>2$ are contained in $\mathfrak{t}\cap\mathfrak{h}$. It is also obvious that for each root $\gamma$ of $\mathfrak{g}$ with $\gamma\neq\pm\alpha$, $\gamma\neq\pm\beta$ and
$\mathrm{pr}_{\mathfrak{h}}(\gamma)=\gamma'$,
$\mathfrak{g}_{\pm\gamma}=\hat{\mathfrak{g}}_{\pm\gamma'}$ is contained  either in $\mathfrak{h}$ or in $\mathfrak{m}$.

Now we prove that for any $i>2$,   $\mathfrak{g}_i$  is contained in
$\mathfrak{t}\cap\mathfrak{h}$. Since the simple factor $\mathfrak{g}_i$
can be algebraically generated by its root planes,
we only need to prove that each root plane of $\mathfrak{g}_i$ is
contained in $\mathfrak{h}$.
Let $\gamma$ be a root of $\mathfrak{g}_i$. Since $i>2$, $\gamma$ is contained in $\mathfrak{t}\cap\mathfrak{h}$ and it is the only root
of $\mathfrak{g}$ in $\gamma+(\mathfrak{t}\cap\mathfrak{m})$. By
Lemma \ref{key-lemma-1}, $\gamma$ is a root of $\mathfrak{h}$ and
$\mathfrak{g}_{\pm\gamma}=\hat{\mathfrak{g}}_{\pm\gamma}=
\mathfrak{h}_{\pm\gamma}\subset\mathfrak{h}$.

Consider the roots of $\mathfrak{g}_1$ and $\mathfrak{g}_2$.
Up to  equivalence, we can  assume that
$\mathfrak{g}=\mathfrak{g}_1\oplus\mathfrak{g}_2$.
Let
$\gamma$ be a root of $\mathfrak{g}_1$ such that $\gamma\neq\pm\alpha$.
Since it is the only root of $\mathfrak{g}_1$ contained in
$\gamma+(\mathfrak{t}\cap\mathfrak{m})$, by Lemma \ref{key-lemma-1},
if $\gamma\in\mathfrak{t}\cap\mathfrak{h}$, then $\mathfrak{g}_{\pm\gamma}
\subset\mathfrak{h}$. On the other hand,
 if $\gamma$ is not bi-invariant
orthogonal to $\alpha$, then by Lemma \ref{trick-lemma-0},
$\mathfrak{g}_{\pm\gamma}\subset\mathfrak{m}$.
The similar assertion is  valid for any root of $\mathfrak{g}_2$.

Now we claim that there does not exist two roots
$\gamma_1$ and $\gamma_2$ of $\mathfrak{g}_1$ and $\mathfrak{g}_2$, respectively, such that their root planes are contained
in $\mathfrak{m}$. In fact, otherwise we will have $\gamma_1\neq \pm\alpha$ and $\gamma_2\neq\pm\beta$. Then  $\gamma_1$ and $\gamma_2$  satisfy the conditions (1)-(4) of Lemma \ref{key-lemma-2}, which
is impossible.

Without loss of generality, we can  assume that all roots of
$\mathfrak{g}_1$ other than $\pm\alpha$ are roots of $\mathfrak{h}$. Thus
they are bi-invariant orthogonal to $\pm\alpha$. Since $\mathfrak{g}_1$ is simple,
$\mathfrak{g}_1$ is of type $A_1$ with the only roots $\pm\alpha$. Now we consider $\mathfrak{g}_2$. We first prove the following lemma
\begin{lemma} \label{lemma-6-1}
Keep the above assumptions and notation. Then there does not
exist a pair of roots $\gamma_1$ and $\gamma_2$ of $\mathfrak{g}_2$ satisfying the following conditions:
\begin{description}
\item{\rm (1)}\quad $\gamma_1\neq\pm\gamma_2$, $\gamma_1\neq\pm\beta$ and $\gamma_2\neq\pm\beta$;
\item{\rm (2)}\quad Neither $\gamma_1$ nor $\gamma_2$ is a root of $\mathfrak{h}$;
\item{\rm (3)}\quad None of $\gamma_1\pm\gamma_2$ is a root of $\mathfrak{g}$.
\end{description}
\end{lemma}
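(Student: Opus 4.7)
My plan is to derive a contradiction by verifying that, granted (1)--(3) of the present lemma, the pair $(\gamma_1,\gamma_2)$ satisfies all four hypotheses of Lemma \ref{key-lemma-2} in the roles of $(\alpha,\beta)$ there. Once that is established, Lemma \ref{key-lemma-2} directly rules out such a pair on the positively curved reversible space $(G/H,F)$.

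Hypotheses (1) and (2) of Lemma \ref{key-lemma-2} coincide verbatim with (2) and (3) of the present lemma, so nothing extra is required there. To verify hypothesis (3), I would examine the plane $\mathbb{R}\gamma_1+\mathfrak{t}\cap\mathfrak{m}=\mathbb{R}\gamma_1+\mathbb{R}(\alpha-\beta)$ and split any putative root on it according to $\mathfrak{g}=\mathfrak{g}_1\oplus\mathfrak{g}_2$. A root lying in $\mathfrak{g}_2$ must have zero $\mathfrak{g}_1$-component, which kills the $(\alpha-\beta)$-coefficient and leaves a real multiple of $\gamma_1$; reducedness then forces this multiple to be $\pm\gamma_1$. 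A root in $\mathfrak{g}_1$ is $\pm\alpha$, and requiring its $\mathfrak{g}_2$-component to vanish forces $\gamma_1\parallel\beta$, which is excluded by condition (1) of the present lemma together with reducedness.

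Verifying hypothesis (4) is the crux. For any root $\delta\in\pm\gamma_2+\mathbb{R}\gamma_1+\mathfrak{t}\cap\mathfrak{m}$, the same projection argument splits into two subcases. If $\delta\in\mathfrak{g}_2$, the question reduces to identifying the roots of $\mathfrak{g}_2$ on the line $\pm\gamma_2+\mathbb{R}\gamma_1$; by the standard $\mathfrak{sl}_2$-string theorem these are integer translates $\pm\gamma_2+k\gamma_1$, and condition (3) of the present lemma collapses the string to length one, leaving only $\pm\gamma_2$. If $\delta=\pm\alpha\in\mathfrak{g}_1$, equating the two projections reduces to the requirement that $\pm\beta$ lie on the affine line $\pm\gamma_2+\mathbb{R}\gamma_1$; applying the same string theorem inside $\mathfrak{g}_2$, $\pm\beta$ must then be of the form $\pm\gamma_2+k\gamma_1$ with $k\in\mathbb{Z}$, and the length-one string forces $k=0$, giving $\pm\beta=\pm\gamma_2$ in contradiction with condition (1).

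The main obstacle I anticipate is exactly this second subcase of hypothesis (4): a priori, the $A_1$-factor $\mathfrak{g}_1$ might contribute the roots $\pm\alpha$ to the affine plane $\pm\gamma_2+\mathbb{R}\gamma_1+\mathfrak{t}\cap\mathfrak{m}$, precisely when $\gamma_2\pm\beta$ happens to lie on the line $\mathbb{R}\gamma_1$. The resolution I would rely on is the discreteness inherent in $\mathfrak{sl}_2$-string theory applied inside $\mathfrak{g}_2$: any proportionality of $\gamma_2\pm\beta$ to $\gamma_1$ would, through the string relation $\pm\beta\in\gamma_2+\mathbb{Z}\gamma_1$, force either $\gamma_1+\gamma_2$ or $\gamma_1-\gamma_2$ to be a root of $\mathfrak{g}$, contradicting (3) of the present lemma.
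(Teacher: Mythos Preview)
Your proposal is correct and follows essentially the same route as the paper: verify that conditions (1)--(4) of Lemma~\ref{key-lemma-2} hold for the pair $(\gamma_1,\gamma_2)$ and then invoke that lemma. The paper's own argument is terser---it checks only the affine line $\gamma_1+\mathbb{R}(\alpha-\beta)$ for condition (3), writes the case $t_2=\pm 1$ as ``similarly we can get a contradiction,'' and implicitly uses (just as you do) that in a reduced root system the roots on an affine line $\gamma_2+\mathbb{R}\gamma_1$ coincide with a single $\gamma_1$-string through $\gamma_2$---but the mathematical content is identical to yours.
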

\begin{proof}
Assume conversely that there are two roots $\gamma_1$ and $\gamma_2$ of $\mathfrak{g}_2$ satisfying (1)-(3) of the lemma.
Then it is easy to see that
$\gamma_1$ is the only root in $\gamma_1+\mathbb{R}(\alpha-\beta)$. On the
other hand, if there exist some real numbers $t_1$ and $t_2$,  such that $\gamma_3=\gamma_2+t_1\gamma_1+t_2(\alpha-\beta)$ is a root of $\mathfrak{g}$ other
than $\gamma_2$, then we have $t_2\in\{-1,0,1\}$. If $t_2=0$,  then
$\gamma_3=\gamma_2+t_1\gamma_1$, with $t_1\neq 0$,  is a root of $\mathfrak{g}_2$. This is impossible,  Since $\gamma_1\pm\gamma_2$ are not roots of
$\mathfrak{g}_2$. If $t_2=\pm 1$ then $\pm\beta=t_1\gamma_1+\gamma_2$ is a
root of $\mathfrak{g}_2$ other than $\gamma_2$. Similarly
we can get a contradiction. This implies that the pair of roots $\gamma_1$ and $\gamma_2$
satisfy the conditions (1)-(4) of Lemma \ref{key-lemma-2}, which is a contradiction.
\end{proof}

Let $\mathfrak{k}$ be the subalgebra of
$\mathfrak{g}_2$ generated by $\mathfrak{g}_{\pm\beta}$ and
$\mathfrak{h}\cap\mathfrak{g}_2$. It has the same rank as $\mathfrak{g}_2$ and  can be decomposed as a direct sum
$\mathfrak{k}=A_1\oplus(\mathfrak{h}\cap\mathfrak{g}_2)$, in which the $A_1$-factor is generated by
$\mathfrak{g}_{\pm\beta}$. By Lemma \ref{lemma-6-1}, the pair $(\mathfrak{g}_2,\mathfrak{k})$ satisfies the condition (A) in \cite{Wallach1972}. Then by Proposition 6.1 of \cite{Wallach1972}, the pair $(\mathfrak{g}_2,\mathfrak{k})$ must be one of the following:
$$((A_1,A_1),
(A_2,A_1\oplus\mathbb{R})\mbox{ or }
(C_n,A_1\oplus C_{n-1}).$$
Correspondingly,  the pair $(\mathfrak{g},\mathfrak{h})$ must be one of the following:
$$(A_1\oplus A_1,A_1), (A_1\oplus A_2,A_1\oplus\mathbb{R})
\mbox{ or }(A_1\oplus C_n, A_1\oplus C_{n-1}),$$
in which the $A_1$-factor in $\mathfrak{h}$ is the diagonal subalgebra. Thus the corresponding homogeneous Finsler space is equivalent to
the symmetric homogeneous sphere $S^3=\mathrm{SO}(4)/\mathrm{SO}(3)$,
or the Wilking's space $\mathrm{SU}(3)\times\mathrm{SO}(3)/\mathrm{U}(2)$
(which coincides with the Aloff-Wallach's space $S_{1,1}$, see \cite{AW75} and \cite{Wi1999}),
or the homogeneous sphere $S^{4n-1}=\mathrm{Sp}(n)\mathrm{Sp}(1)/\mathrm{Sp}(n-1)\mathrm{Sp}(1)$.

To summarize, we have the following theorem, which gives a complete classification of odd dimensional positively curved reversible homogeneous Finsler spaces in Case II.

\begin{theorem}\label{mainthm-part-2}
Let $(G/H,F)$ be an odd dimensional positively curved reversibly homogeneous Finsler space of Case II, i.e., with respect to a bi-invariant orthogonal
decomposition $\mathfrak{g}=\mathfrak{h}+\mathfrak{m}$ for the compact Lie algebra $\mathfrak{g}=\mathrm{Lie}(G)$ and a fundamental Cartan subalgebra $\mathfrak{t}$ of $\mathfrak{g}$, there are roots $\alpha$ and $\beta$ of $\mathfrak{g}$ from different simple factors such that
$\mathrm{pr}_\mathfrak{h}(\alpha)=\mathrm{pr}_\mathfrak{h}(\beta)=\alpha'$
is a root of $\mathfrak{h}$.
Then $(G/H,F)$ is equivalent to one of the following
 homogeneous Finsler spaces:
\begin{description}
\item{\rm (1)}\quad The symmetric homogeneous sphere
$S^{3}=\mathrm{SO}(4)/\mathrm{SO}(3)$;
\item{\rm (2)}\quad  The homogeneous spheres $\mathrm{Sp}(n)\mathrm{Sp}(1)/\mathrm{Sp}(n-1)\mathrm{Sp}(1)$;
\item{\rm (3)}\quad The Wilking's space $\mathrm{SU}(3)\times\mathrm{SO}(3)/\mathrm{U}(2)$.
\end{description}
\end{theorem}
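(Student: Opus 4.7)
My plan is to mimic the Case III reduction and narrow the problem down to a classification already available in the literature (Wallach \cite{Wallach1972}). First, I would fix a direct sum decomposition $\mathfrak{g}=\mathfrak{g}_1\oplus\cdots\oplus\mathfrak{g}_n\oplus\mathbb{R}^m$ with $\alpha\in\mathfrak{g}_1$ and $\beta\in\mathfrak{g}_2$. Since $\mathfrak{t}\cap\mathfrak{m}=\mathbb{R}(\alpha-\beta)$ is contained in $\mathfrak{g}_1\oplus\mathfrak{g}_2$, the abelian factor and each $\mathfrak{t}\cap\mathfrak{g}_i$ for $i>2$ lie in $\mathfrak{h}$. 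For any root $\gamma$ of $\mathfrak{g}_i$ with $i>2$, $\gamma\in\mathfrak{t}\cap\mathfrak{h}$ is the unique root in $\gamma+\mathfrak{t}\cap\mathfrak{m}$, so Lemma \ref{key-lemma-1} forces $\mathfrak{g}_{\pm\gamma}\subset\mathfrak{h}$. This absorbs all $\mathfrak{g}_i$ with $i>2$ into $\mathfrak{h}$, so up to equivalence $\mathfrak{g}=\mathfrak{g}_1\oplus\mathfrak{g}_2$.

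Next I would analyze the roots of $\mathfrak{g}_1$ and $\mathfrak{g}_2$. For a root $\gamma\ne\pm\alpha$ of $\mathfrak{g}_1$, the same ``uniqueness in $\gamma+\mathfrak{t}\cap\mathfrak{m}$'' argument combined with Lemma \ref{key-lemma-1} places $\mathfrak{g}_{\pm\gamma}$ in $\mathfrak{h}$ whenever $\gamma\in\mathfrak{t}\cap\mathfrak{h}$, while if $\gamma$ fails to be bi-invariantly orthogonal to $\alpha$, Lemma \ref{trick-lemma-0} puts $\mathfrak{g}_{\pm\gamma}$ into $\mathfrak{m}$; the analogous statement holds for $\mathfrak{g}_2$. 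Then I would invoke Lemma \ref{key-lemma-2} to rule out the existence of roots $\gamma_1\in\mathfrak{g}_1\setminus\{\pm\alpha\}$ and $\gamma_2\in\mathfrak{g}_2\setminus\{\pm\beta\}$ whose root planes both lie in $\mathfrak{m}$: conditions (1)--(4) of that lemma are immediate since $\gamma_1$ and $\gamma_2$ lie in different simple factors. WLOG all roots of $\mathfrak{g}_1$ other than $\pm\alpha$ have root planes in $\mathfrak{h}$; as they are bi-invariantly orthogonal to $\alpha$ and $\mathfrak{g}_1$ is simple, this forces $\mathfrak{g}_1=A_1=\mathbb{R}\alpha+\mathfrak{g}_{\pm\alpha}$.

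The third step is to constrain $\mathfrak{g}_2$ through the auxiliary Lemma \ref{lemma-6-1}: any pair $\gamma_1,\gamma_2$ of roots of $\mathfrak{g}_2$ with $\gamma_i\ne\pm\beta$, neither a root of $\mathfrak{h}$, and $\gamma_1\pm\gamma_2$ not roots of $\mathfrak{g}$, would satisfy the hypotheses of Lemma \ref{key-lemma-2}. The only subtlety is verifying condition (4) there: if $\gamma_3=\gamma_2+t_1\gamma_1+t_2(\alpha-\beta)$ is a root with $\gamma_3\ne\pm\gamma_2$, then since the $\mathfrak{g}_1$-component of any root is $0$ or $\pm\alpha$, one has $t_2\in\{-1,0,1\}$, and each value is quickly excluded using $\gamma_1\pm\gamma_2\notin\Delta_\mathfrak{g}$ and $\gamma_1\ne\pm\beta$. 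This gives Lemma \ref{lemma-6-1}, which I would then reformulate as Wallach's condition (A) applied to the pair $(\mathfrak{g}_2,\mathfrak{k})$, where $\mathfrak{k}$ is the subalgebra of $\mathfrak{g}_2$ generated by $\mathfrak{g}_{\pm\beta}$ and $\mathfrak{h}\cap\mathfrak{g}_2$; by construction $\mathfrak{k}$ has full rank and decomposes as $A_1\oplus(\mathfrak{h}\cap\mathfrak{g}_2)$.

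Finally, I would invoke Proposition 6.1 of \cite{Wallach1972} to conclude that $(\mathfrak{g}_2,\mathfrak{k})$ is one of $(A_1,A_1)$, $(A_2,A_1\oplus\mathbb{R})$, or $(C_n,A_1\oplus C_{n-1})$. Consequently $(\mathfrak{g},\mathfrak{h})$ is one of $(A_1\oplus A_1,A_1)$, $(A_1\oplus A_2,A_1\oplus\mathbb{R})$, or $(A_1\oplus C_n,A_1\oplus C_{n-1})$ with the $A_1$-factor in $\mathfrak{h}$ sitting diagonally, yielding precisely $S^3=\mathrm{SO}(4)/\mathrm{SO}(3)$, the Wilking space $\mathrm{SU}(3)\times\mathrm{SO}(3)/\mathrm{U}(2)$, and $S^{4n-1}=\mathrm{Sp}(n)\mathrm{Sp}(1)/\mathrm{Sp}(n-1)\mathrm{Sp}(1)$. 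The main obstacle is the careful bookkeeping in Lemma \ref{lemma-6-1} showing that the hypothesis (4) of the key lemma is really satisfied; everything else flows from assembling the lemmas already proved in Section 3 together with the known classification in \cite{Wallach1972}.
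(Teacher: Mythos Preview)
Your proposal is correct and follows essentially the same route as the paper: reduce to $\mathfrak{g}=\mathfrak{g}_1\oplus\mathfrak{g}_2$ via Lemma~\ref{key-lemma-1}, use Lemma~\ref{key-lemma-2} to force $\mathfrak{g}_1=A_1$, prove the auxiliary Lemma~\ref{lemma-6-1} (with the same case analysis on $t_2\in\{-1,0,1\}$ for condition~(4)), and then feed the pair $(\mathfrak{g}_2,\mathfrak{k})$ into Wallach's Proposition~6.1. The only cosmetic difference is that the paper also records explicitly at the outset that $\mathfrak{h}_{\pm\alpha'}$ cannot be a root plane of $\mathfrak{g}$, but this plays no further role in the argument.
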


\subsection{The Case I}

Let  $(G/H,F)$ be an odd dimensional positively curved reversible
homogeneous Finsler space in Case I, i.e.,  with respect to a bi-invariant
orthogonal decomposition $\mathfrak{g}=\mathfrak{h}+\mathfrak{m}$ for the compact Lie algebra $\mathfrak{g}=\mathrm{Lie}(G)$ and a fundamental Cartan subalgebra $\mathfrak{t}$, each root plane of
$\mathfrak{h}$ is also a root plane of $\mathfrak{g}$. Keep all the relevant notation as before. The root system of $\mathfrak{h}$ is then a subset of the root system of $\mathfrak{g}$, that is,
$\Delta_\mathfrak{h}\subset\Delta_\mathfrak{g}\cap\mathfrak{h}$. For each
root $\alpha$ of $\mathfrak{g}$, we have either $\mathfrak{g}_{\pm\alpha}=\mathfrak{h}_{\pm\alpha}\subset\mathfrak{h}$ or
$\mathfrak{g}_{\pm\alpha}\subset\mathfrak{m}$.

Suppose  $\mathfrak{g}$ has the following direct sum decomposition:
\begin{equation}\label{3950}
\mathfrak{g}
=\mathfrak{g}_0\oplus\mathfrak{g}_1\oplus\cdots\oplus\mathfrak{g}_n,
\end{equation}
 where $\mathfrak{g}_0$ is  abelian and each $\mathfrak{g}_i$, $1\leq i\leq n$, is a simple ideal. Given a nonzero vector $w$ in $\mathfrak{t}\cap\mathfrak{m}$,  let $w=w_0+\cdots+w_n$ be the decomposition of $w$ with respect to (\ref{3950}).
Then it follows  from Lemma \ref{key-lemma-1} that  $\mathfrak{g}_i$ is contained in $\mathfrak{h}$ if and only if  $w_i=0$, for any $w\in \mathfrak{t}\cap\mathfrak{m}$.
Now we have the following cases:

{\bf Case 1.}\quad
There exists $w\in \mathfrak{t}\cap\mathfrak{m}$ such that $w_0\neq 0$.

We first assert that if $\alpha$ and
$\beta$ are two roots of $\mathfrak{g}$, such that none of them is a root of $\mathfrak{h}$, then
at least one of the roots  $\alpha\pm\beta$ is a   root of $\mathfrak{g}$. In fact, otherwise
the pair of roots $\alpha, \beta$ will satisfy the conditions  (1)-(4) of Lemma \ref{key-lemma-2}, which is a contradiction. Now let $\mathfrak{k}$ be the subalgebra generated by $\mathfrak{h}$ and $\mathfrak{t}$. Then we have $\mathfrak{k}=\mathfrak{h}\oplus(\mathfrak{t}\cap\mathfrak{m})$.
Let $K$ be a closed subgroup  of $G$ with $\mathrm{Lie}(K)=\mathfrak{k}$. Then
 we have $\mathrm{rk}K=\mathrm{rk}G$. This implies that the pair $(\mathfrak{g},\mathfrak{k})$ satisfies the Condition (A) in \cite{Wallach1972}.  Thus we can suppose that in the
decomposition (\ref{3950}) of $\mathfrak{g}$, the following equation holds:
$$\mathfrak{k}=\mathfrak{g}_0\oplus\mathfrak{k}_1\oplus
\mathfrak{g}_2\oplus\cdots\oplus\mathfrak{g}_n,$$
where
$$(\mathfrak{g}_1,\mathfrak{k}_1)=(A_n,A_{n-1}\oplus\mathbb{R}),
(C_n,C_{n-1}\oplus\mathbb{R}),\mbox{ or }
(A_2,\mathbb{R}\oplus\mathbb{R}).$$
Notice that in other spaces of  Wallach's list,  the abelian factor required for this situation does not appear. If $\mathfrak{g}_1=A_2$, then by Lemma \ref{key-lemma-1}, no root of $\mathfrak{g}_1$ can be contained in $\mathfrak{t}\cap\mathfrak{h}$. Thus  $(G/H,F)$ is equivalent to one of the following:
\begin{description}
\item{\rm (1)}\quad  The homogeneous sphere $$
S^{2n-1}=\mathrm{U}(n)/\mathrm{U}(n-1)
\mbox{ or }
S^{4n-1}=\mathrm{Sp}(n)\mathrm{U}(1)/\mathrm{Sp}(n-1)\mathrm{U}(1)\mbox{ for }n>1;$$
\item{\rm (2)}\quad  The $\mathrm{U}(3)$-homogeneous presentations of Aloff-Wallach's spaces $S_{k,l}=\mathrm{U}(3)/T^2$, in which $T^2$ is a two dimensional torus of diagonal matrices which does not contain the center of $\mathrm{U}(3)$ and $$T^2\cap\mathrm{SU}(3)=U_{k,l}=
\{\mathrm{diag}(z^k,z^l,z^{-k-l})|z\in\mathbb{C},|z|=1\},$$
where $k$ and $l$ are integers satisfying $kl(k+l)\neq 0$.
\end{description}

Notice that the  $\mathrm{SU}(3)$-homogeneous space $S_{k,l}$  have infinitely many different presentation as $\mathrm{U}(2)$-homogeneous spaces; See \cite{AW75}.

\medskip
{\bf Case 2.} There exists $w\in \mathfrak{t}\cap\mathfrak{m}$ with decomposition  $w=w_1+w_2$, where  both $w_1$ and $w_2$ are nonzero.

Up to equivalence, we can  assume that $\mathfrak{g}=\mathfrak{g}_1\oplus\mathfrak{g}_2$.

We first assert that  there does not exist a root $\alpha$ of $\mathfrak{g}_1$, and a root
$\beta$ of $\mathfrak{g}_2$ such that $\alpha \notin\mathbb{R}w_1$, $\beta \notin\mathbb{R}w_2$,  and none of them is a  root of $\mathfrak{h}$. In fact, otherwise  the pair of roots  $\alpha$ and $\beta$ will satisfy (1)-(4) of Lemma \ref{key-lemma-2}, which is a contradiction.
Without loss of generality, we can assume that all the roots of $\mathfrak{g}_1$
outside $\mathbb{R}w_1$ are roots of $\mathfrak{h}$, i.e., they are contained in
$\mathfrak{t}\cap\mathfrak{h}$. By the simpleness of $\mathfrak{g}_1$, we
must have $\mathfrak{g}_1=A_1$, and   the only roots  in $\mathfrak{t}\cap\mathfrak{g}_1=\mathbb{R}w_1$ are $\pm\alpha$. There are two subcases:

{\bf Subcase 1.}\quad  There exists a root $\beta$ of $\mathfrak{g}_2$ contained in $\mathbb{R}w_2$.

Obviously neither $\alpha$ nor $\beta$ is a root of $\mathfrak{h}$, i.e., their root planes are contained in $\mathfrak{m}$.
Let $\mathfrak{t}'$ be the bi-invariant orthogonal complement
of $w_2$ in $\mathfrak{g}_2$ and $T'$ be the corresponding torus in $H$.
Using Lemma \ref{totally-geodesic-lemma} for $T'$, we get a positively curved reversible homogeneous Finsler space $\mathrm{SU}(2)\times\mathrm{SU}(2)/\mathrm{U}(1)$ in Case I, in which
$\mathrm{U}(1)$ is not contained in any of the  simple factors. To prove the
reversible homogeneous space $G/H$ can not be positively curved in this subcase,
we only need to consider the  situation that
$\mathfrak{g}_2=A_1$, and the only roots are $\pm\beta$.
Fix a bi-invariant inner product on $\mathfrak{g}=\mathfrak{g}_1\oplus\mathfrak{g}_2=
A_1\oplus A_1$
such that its restriction on each factor has the same scale. By suitably re-ordering the two simple factors, we can assume that
$\alpha+c\beta\in\mathfrak{t}\cap\mathfrak{m}$ with $|c|\geq 1$.
Denote $\alpha'=\mathrm{pr}_\mathfrak{h}(\alpha)$
and $\beta'=\mathrm{pr}_\mathfrak{h}(\beta)$. Then the above  assumption implies that
$\beta'$ is not  an even multiple of $\alpha'$.

Let $\{u,u'\}$ be a bi-invariant orthonormal basis of
$\mathfrak{g}_{\pm\alpha}$, and $v$ a nonzero vector in $\mathfrak{g}_{\pm\beta}$ such that $\langle u',v\rangle_u^F=0$.
Obviously $u$ and $v$ are  linearly independent and commutative.
By the $\mathrm{Ad}(H)$-invariance, the Minkowski norm
$F|_{\mathfrak{g}_{\pm\alpha}}$ coincides with the bi-invariant inner product
up to scalar changes.  Thus
\begin{equation*}
\langle u',u\rangle_u^F=\langle[\mathfrak{t},u],u\rangle_u^F=0.
\end{equation*}
By the assumption and Lemma \ref{lemma-3-6}, we have
\begin{equation}\label{3970}
\langle \mathfrak{t}\cap\mathfrak{m},u\rangle_u^F=
\langle \mathfrak{t}\cap\mathfrak{m},v\rangle_u^F=0.
\end{equation}
Then a direct calculation shows that
\begin{equation}
[u,\mathfrak{m}]_\mathfrak{m}=\mathfrak{t}\cap\mathfrak{m}+[\mathfrak{t},u].
\end{equation}
So by (\ref{3970}), we get
\begin{equation}\label{3960}
\langle[u,\mathfrak{m}]_\mathfrak{m},u\rangle_u^F=
\langle\mathfrak{t}\cap\mathfrak{m},u\rangle_u^F+
\langle\mathbb{R}u',u\rangle_u^F=0,
\end{equation}
and
\begin{equation}\label{3961}
\langle[u,\mathfrak{m}]_\mathfrak{m},v\rangle_u^F=
\langle\mathfrak{t}\cap\mathfrak{m},v\rangle_u^F+
\langle\mathbb{R}u',v\rangle_u^F=0.
\end{equation}

Now a direct calculation  shows that
\begin{equation}\label{3850}
[v,\mathfrak{m}]_\mathfrak{m}=\mathfrak{t}\cap\mathfrak{m}+
[\mathfrak{t}\cap\mathfrak{m},v]=\mathfrak{t}\cap\mathfrak{m}+
[\mathfrak{t}\cap\mathfrak{h},v].
\end{equation}
For any $w'\in\mathfrak{t}\cap\mathfrak{h}$, we have,  by Theorem 3.1 of \cite{DH2004},
$$\langle[w',v],u\rangle_u^F=-\langle v,[w',u]\rangle_u^F-2C^F_u([w',u],v,u)=0.$$
So by Lemma \ref{lemma-3-6} and (\ref{3850}), we have
\begin{equation}\label{3962}
\langle[v,\mathfrak{m}]_\mathfrak{m},u\rangle_u^F=
\langle[v,\mathfrak{t}\cap\mathfrak{h}],u\rangle_u^F=0.
\end{equation}

 Taking the summation of (\ref{3960}), (\ref{3961}) and (\ref{3962}), we get $U(u,v)=0$. Hence  by Theorem \ref{flag-curvature-formula-thm},   $K^F(o,u,u\wedge v)=0$,
which is a contradiction. Hence the corresponding coset space does not admit any invariant Finsler metric with positive flag curvature.

\medskip
{\bf Subcase 2.}\quad
There does not exist any root of $\mathfrak{g}_2$ in $\mathbb{R}w_2$.

Then by the simpleness of $\mathfrak{g}_2$, there is a root
$\beta$ of $\mathfrak{g}_2$ which is not bi-invariant orthogonal to $w_2$.
Let $u$ and $v$ be any nonzero vectors in $\mathfrak{g}_{\pm\alpha}$ and $\mathfrak{g}_{\pm\beta}$ respectively. Then  they are  linearly independent and commutative. The  subalgebra $\mathfrak{t}'=\mathfrak{t}\cap\mathfrak{h}\cap\mathfrak{g}_2$  coincides with $w_2^{\perp}\cap
\mathfrak{t}\cap\mathfrak{g}_{2}$, the bi-invariant orthogonal complement of $w_2$ in $\mathfrak{t}\cap\mathfrak{g}_2$. Denote $T'$ the corresponding
torus in $H$.
Since the inner product $\langle\cdot,\cdot\rangle_u^F$ is
$\mathrm{Ad}(T')$-invariant, by Lemma \ref{lemma-3-8}, $\mathfrak{m}$ can be $g_u^F$-orthogonally decomposed as the sum of
$\mathfrak{m}'=\hat{\hat{\mathfrak{m}}}_0=\mathfrak{t}\cap\mathfrak{m}+\mathfrak{g}_{\pm\alpha}$ for the trivial irreducible $T'$-representation and $\mathfrak{m}''\subset\mathfrak{g}_2$ for nontrivial irreducible $T'$-representations. Notice that $\mathfrak{m}''$ is the sum of some root planes in $\mathfrak{g}_2$, and  $u$ and $v$ are contained in $\mathfrak{m}'$ and $\mathfrak{m}''$, respectively.

Now a direct calculation shows that
\begin{equation}\label{3900}
[u,\mathfrak{m}]_\mathfrak{m}=\mathfrak{t}\cap\mathfrak{m}+
[\mathfrak{t},u]\subset\mathfrak{m}'\mbox{ and }[v,\mathfrak{m}]_{\mathfrak{m}}\subset
\mathfrak{t}\cap\mathfrak{m}+\mathfrak{m}''.
\end{equation}
Moreover, the $\mathrm{Ad}(T_H)$ invariance of $F|_{\mathfrak{g}_{\pm\alpha}}$ implies that $F|_{\mathfrak{g}_{\pm\alpha}}$ coincides with the restriction of a bi-invariant inner product up to scalar changes. Thus we have
\begin{equation}\label{3901}
\langle[\mathfrak{t},u],u\rangle_u^F=
\langle[\mathfrak{t}\cap\mathfrak{h},u],u\rangle_u^F=0.
\end{equation}
By Lemma \ref{lemma-3-6},
\begin{equation}\label{3902}
\langle \mathfrak{t}\cap\mathfrak{m},u\rangle_u^F=0.
\end{equation}
Taking the summation of  (\ref{3900}), (\ref{3901}) and (\ref{3902}), we get
$$\langle[u,\mathfrak{m}]_\mathfrak{m},u\rangle_u^F=
\langle[u,\mathfrak{m}]_\mathfrak{m},v\rangle_u^F=
\langle[v,\mathfrak{m}]_\mathfrak{m},u\rangle_u^F=0.$$
Therefore $U(u,v)=0$. Now by Theorem \ref{flag-curvature-formula-thm},
$K^F(o,u,u\wedge v)=0$. This is a contradiction. Hence the corresponding coset space does not admit any invariant Finsler metric with positive flag curvature.

{\bf Case 3.}\quad There exists $w\in \mathfrak{t}\cap\mathfrak{m}$  such that $w=w_1+\cdots+w_m$,  where $m>2$ and  $w_i$, $1\leq i\leq m$ are all  nonzero.

If there is a root $\alpha\notin\mathbb{R}w_1$ of $\mathfrak{g}_1$,
and a root $\beta\notin\mathbb{R}w_2$ of $\mathfrak{g}_2$ such that they are
not roots of $\mathfrak{h}$, then they  satisfy the conditions (1)-(4) of Lemma \ref{key-lemma-2}, which is a contradiction. Similarly to the previous case, we can assume that $\mathfrak{g}_1=A_1$. Let $\pm\alpha$ be
the only roots of $\mathfrak{g}_1$, then we have
$\mathfrak{g}_{\pm\alpha}\subset\mathfrak{m}$.
We can also find a root $\beta$ of $\mathfrak{g}_2$ which is not bi-invariant
orthogonal to $w_2$, then
$\mathfrak{g}_{\pm\beta}\subset\mathfrak{m}$.
Let $u$ and $v$ be any nonzero vectors
in $\mathfrak{g}_{\pm\alpha}$ and $\mathfrak{g}_{\pm\beta}$ respectively.
Notice there does not exist any root which is contained in
$\mathbb{R}(w_2+\cdots+w_m)$, thus a similar argument as for Subcase 2 of the previous case
can be applied to prove $K^F(o,u,u\wedge v)=0$, which is a contradiction.  Hence the corresponding coset space does not admit any invariant reversible Finsler metric with positive flag curvature.

The above results can be summarized as the following theorem, which gives a
"nearly" complete classification for the coset spaces in Case I which admit invariant Finsler metrics with positive flag curvature.
\begin{theorem} \label{mainthm-part-3}
Let $(G/H,F)$ be an odd dimensional positively curved reversible homogeneous Finsler space in Case I, i.e., with respect to a bi-invariant orthogonal decomposition $\mathfrak{g}=\mathfrak{h}+\mathfrak{m}$ of the compact Lie algebra $\mathfrak{g}=\mathrm{Lie}(G)$ and a fundamental Cartan subalgebra $\mathfrak{t}$ of $\mathfrak{g}$, each root plane of $\mathfrak{h}$ is also a root plane of
$\mathfrak{g}$.
Assume that $(G/H,F)$ is not equivalent to a homogeneous Finsler space $(G'/H',F')$ with  compact simple $G'$. Then $(G/H,F)$ must be equivalent to one of the following:
\begin{description}
\item{\rm (1)}\quad The homogeneous sphere $$
S^{2n-1}=\mathrm{U}(n)/\mathrm{U}(n-1), \mbox{ or }
S^{4n-1}=\mathrm{Sp}(n)\mathrm{U}(1)/\mathrm{Sp}(n-1)\mathrm{U}(1),\quad n>1;$$
\item{\rm (2)}\quad  The $\mathrm{U}(3)$-homogeneous Aloff-Wallach's spaces.
\end{description}
\end{theorem}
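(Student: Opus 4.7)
The plan is to exploit the rank equality (Corollary \ref{rank-equality-corollary}), which tells us that $\mathfrak{t}\cap\mathfrak{m}$ is one-dimensional, and then trichotomize on how the distinguished direction $w$ spanning $\mathfrak{t}\cap\mathfrak{m}$ distributes over the decomposition
\[
\mathfrak{g}=\mathfrak{g}_0\oplus\mathfrak{g}_1\oplus\cdots\oplus\mathfrak{g}_n
\]
into the abelian factor plus simple ideals. Writing $w=w_0+w_1+\cdots+w_n$, note first that Lemma \ref{key-lemma-1} forces any simple factor $\mathfrak{g}_i$ with $w_i=0$ to lie in $\mathfrak{h}$, so such factors can be cancelled up to equivalence. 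Thus we are reduced to the three possibilities: (1) $w_0\ne 0$; (2) $w_0=0$ and exactly two of the $w_i$ are nonzero; (3) $w_0=0$ and at least three of the $w_i$ are nonzero.

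The easiest case is (1). Here I would observe that, since $\mathfrak{g}_{\pm\alpha}\subset\mathfrak{h}$ or $\mathfrak{g}_{\pm\alpha}\subset\mathfrak{m}$ for every root (that is the defining feature of Case I), Lemma \ref{key-lemma-2} prevents us from finding two roots $\alpha,\beta$, neither in $\mathfrak{h}$, with none of $\alpha\pm\beta$ a root; the $\mathbb R(\alpha-\beta)+\mathfrak{t}\cap\mathfrak{m}$ and $\mathbb R(\alpha-\beta)\pm\beta+\mathfrak{t}\cap\mathfrak{m}$ conditions hold trivially because $\mathfrak{t}\cap\mathfrak{m}=\mathbb R w$ and $w_0\ne 0$ lies outside every simple factor. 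Equivalently, the subalgebra $\mathfrak{k}=\mathfrak{h}\oplus(\mathfrak{t}\cap\mathfrak{m})$ of maximal rank satisfies Wallach's Condition (A). Wallach's Proposition~6.1 lists all such pairs; selecting exactly those with an abelian summand in $\mathfrak{k}$ able to absorb the vector $w_0$ (namely $(A_n,A_{n-1}\oplus\mathbb R)$, $(C_n,C_{n-1}\oplus\mathbb R)$, and $(A_2,\mathbb R\oplus\mathbb R)$) delivers the spheres $\mathrm{U}(n)/\mathrm{U}(n-1)$ and $\mathrm{Sp}(n)\mathrm{U}(1)/\mathrm{Sp}(n-1)\mathrm{U}(1)$ together with the $\mathrm{U}(3)$-homogeneous Aloff--Wallach spaces. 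A small extra verification using Lemma \ref{key-lemma-1} rules out having roots of the $A_2$ factor inside $\mathfrak{t}\cap\mathfrak{h}$ in the last alternative.

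For cases (2) and (3) the aim is to show no invariant reversible positively curved metric exists. Up to equivalence we can assume $\mathfrak{g}=\mathfrak{g}_1\oplus\mathfrak{g}_2$ (in (2)) or absorb the other summands similarly (in (3)). In each of these cases, Lemma \ref{key-lemma-2} again forbids simultaneously having a root $\alpha$ of $\mathfrak{g}_1$ outside $\mathbb R w_1$ and a root $\beta$ of $\mathfrak{g}_2$ outside $\mathbb R w_2$, both outside $\mathfrak{h}$. So I may assume all roots of $\mathfrak{g}_1$ outside $\mathbb R w_1$ are roots of $\mathfrak{h}$, and by simplicity this forces $\mathfrak{g}_1=A_1$ with a single root pair $\pm\alpha$ whose root plane sits in $\mathfrak{m}$. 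Case (2) splits further depending on whether $\mathfrak{g}_2$ has a root in $\mathbb R w_2$: if yes, a totally geodesic reduction via Lemma \ref{totally-geodesic-lemma} either lands in $\mathrm{SU}(2)\times\mathrm{SU}(2)/\mathrm{U}(1)$ or reduces directly to $\mathfrak{g}_2=A_1$ with $\pm\beta$ the only roots; if no, I pick a root $\beta$ of $\mathfrak{g}_2$ not perpendicular to $w_2$. Case (3) handles analogously.

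The closing (and most delicate) step in each of these no-go subcases is to choose $u\in\mathfrak{g}_{\pm\alpha}$ and $v\in\mathfrak{g}_{\pm\beta}$ linearly independent and commutative, and then to show $U(u,v)=0$ directly from the flag curvature formula, forcing $K^F(o,u,u\wedge v)=0$ and contradicting positive curvature. The computation in each subcase follows the same template: decompose $\mathfrak{m}$ into $g_u^F$-orthogonal pieces under the torus action (Lemmas \ref{lemma-3-7}, \ref{lemma-3-8}), use reversibility via Lemma \ref{lemma-3-6} to kill the $\hat{\mathfrak{g}}_0$-components, exploit the $\mathrm{Ad}(T_H)$-invariance of $F|_{\mathfrak{g}_{\pm\alpha}}$ to identify it with a scalar multiple of the bi-invariant inner product, and finally apply the Cartan tensor identity $C^F_u(u,\cdot,\cdot)\equiv 0$ together with Theorem~1.3/3.1 of \cite{DH2004} to deal with $[\mathfrak{t}\cap\mathfrak{h},v]$. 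The main obstacle is precisely this bookkeeping in Case (2) Subcase~1 with $\mathfrak{g}=A_1\oplus A_1$: the two root planes of $\mathfrak{g}$ in $\mathfrak{m}$ are so small that Lemma \ref{key-lemma-2} alone does not apply, and one must invoke the full flag curvature formula with careful use of the reversibility-based orthogonality lemmas and the Cartan tensor identities to drive $U(u,v)$ to zero.
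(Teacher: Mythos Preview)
Your proposal is correct and follows essentially the same approach as the paper: the same trichotomy on the components of $w\in\mathfrak{t}\cap\mathfrak{m}$, the same appeal to Wallach's Condition (A) via Lemma \ref{key-lemma-2} in Case (1), the same reduction to $\mathfrak{g}_1=A_1$ in Cases (2) and (3), the same subcase split on whether $\mathfrak{g}_2$ has a root in $\mathbb{R}w_2$, and the same direct verification that $U(u,v)=0$ using Lemmas \ref{lemma-3-6}--\ref{lemma-3-8} together with the Cartan tensor identity. One small slip: when you describe conditions (3)--(4) of Lemma \ref{key-lemma-2} you write $\mathbb{R}(\alpha-\beta)$ where you mean $\mathbb{R}\alpha$; the verification goes through because $w_0\ne 0$ forces any root in $\mathbb{R}\alpha+\mathbb{R}w$ (resp.\ $\mathbb{R}\alpha\pm\beta+\mathbb{R}w$) to lie in $\mathbb{R}\alpha$ (resp.\ the $\alpha$-string through $\pm\beta$, which is trivial by (2)).
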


\subsection{Remarks on  normal homogeneous  and Riemannian manifolds}

When we continue the classification for the odd dimensional positively curved
reversible homogeneous Finsler spaces $G/H$ in Case I with $G$ simple, we
meet both technical and substantial difficulties. However,  if we additionally assume $G/H$ to be normal homogeneous or Riemannian, then the classification can
be completed.

 If $(G/H, F)$ is normal homogeneous, then $F$ is subduced by a bi-invariant Finsler metric on $G$. Let $\mathfrak{k}$ be the subalgebra generated by
$\mathfrak{h}$ and $\mathfrak{t}$, and $K$ be the closed subgroup of $G$ with
$\mathrm{Lie}(K)=\mathfrak{k}$. Then we have $\mathfrak{k}=\mathfrak{h}\oplus(\mathfrak{t}\cap\mathfrak{m})$. The same bi-invariant Finsler metric on $G$ defines another normal homogeneous Finsler
metric $\bar{F}$ on $G/K$, such that the natural projection from $G/H$ to
$G/K$ is a Finslerian submersion. Since $\dim G/H >1$, $G/K$ is an even dimensional coset space admitting positively curved normal homogeneous Finsler metrics, which has been classified in \cite{XD2014}. From this clue, we can
easily find the missing homogeneous spheres
$$S^{2n-1}=\mathrm{SU}(n)/\mathrm{SU}(n-1) \mbox{ and }
S^{4n-1}=\mathrm{Sp}(n)/\mathrm{Sp}(n-1),\quad\forall n>1.$$

 If $(G/H, F)$ is Riemannian, then the metric is induced by an $\mathrm{Ad}(H)$-invariant inner product $\langle\cdot,\cdot\rangle$. The submersion technique described above
still works when there does not exist two different roots $\alpha$ and $\beta$
such that $\alpha-\beta\in\mathfrak{t}\cap\mathfrak{m}$. Setting $\beta=-\alpha$, the assumption also implies that there does not exist any root contained in $\mathfrak{t}\cap\mathfrak{m}$. In fact, if $\mathfrak{g}$ is simple, and there is a root contained in $\mathfrak{t}\cap\mathfrak{m}$, then we can always find roots $\alpha$ and $\beta$, such that $\alpha\neq\beta$ and
$\alpha-\beta\in\mathfrak{t}\cap\mathfrak{m}$.

Notice that the lemmas in Subsection \ref{subsection-key-lemmas} can strengthened  in Riemannian geometry. For example, Lemma \ref{key-lemma-2} can be strengthened to the following

\begin{lemma} Let $G/H$ be an odd dimensional positively curved Riemannian homogeneous space, with a bi-invariant orthogonal decomposition
$\mathfrak{g}=\mathfrak{h}+\mathfrak{m}$ and a fundamental Cartan subalgebra
$\mathfrak{t}$. Then there does not exist two roots $\alpha$ and $\beta$ of
$\mathfrak{g}$, satisfying the following conditions:
\begin{description}
\item{\rm (1)}\quad $\alpha$ and $\beta$ are not roots of $\mathfrak{h}$;
\item{\rm (2)}\quad $\alpha\pm\beta$ are not roots of $\mathfrak{g}$;
\item{\rm (3)}\quad $\alpha$ is the only roots contained in
$\alpha+\mathfrak{t}\cap\mathfrak{m}$;
\item{\rm (4)}\quad $\beta$ is the only roots contained in $\beta+\mathfrak{t}\cap\mathfrak{m}$.
\end{description}
\end{lemma}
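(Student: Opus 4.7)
The plan is to mirror the proof of Lemma \ref{key-lemma-2}, exploiting the global $\mathrm{Ad}(T_H)$-orthogonality available in the Riemannian setting, in order to drop the restrictions on $\mathbb{R}\alpha+\mathfrak{t}\cap\mathfrak{m}$ and $\mathbb{R}\alpha\pm\beta+\mathfrak{t}\cap\mathfrak{m}$ that appeared there. Assume for contradiction that $\alpha,\beta$ satisfy (1)--(4), and (as in Lemma \ref{key-lemma-2}) that $\alpha\neq\pm\beta$. Writing $\alpha'=\mathrm{pr}_\mathfrak{h}(\alpha)$ and $\beta'=\mathrm{pr}_\mathfrak{h}(\beta)$, I first verify that $\alpha'\neq\pm\beta'$: the equality $\alpha'=\pm\beta'$ is equivalent to $\mp\beta\in\alpha+\mathfrak{t}\cap\mathfrak{m}$, which by (3) forces $\mp\beta=\alpha$. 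Moreover (3) and (4) upgrade to $\hat{\mathfrak{g}}_{\pm\alpha'}=\mathfrak{g}_{\pm\alpha}$ and $\hat{\mathfrak{g}}_{\pm\beta'}=\mathfrak{g}_{\pm\beta}$ (any root $\tau$ with $\mathrm{pr}_\mathfrak{h}(\tau)=\pm\alpha'$ lies in $\pm\alpha+\mathfrak{t}\cap\mathfrak{m}$ and hence equals $\pm\alpha$), and by (1) both planes lie in $\mathfrak{m}$. Pick nonzero $u\in\mathfrak{g}_{\pm\alpha}$ and $v\in\mathfrak{g}_{\pm\beta}$; they are linearly independent and, by (2), commute.

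The crucial structural point is that for a Riemannian metric the $\mathrm{Ad}(H)$-invariant inner product on $\mathfrak{m}$ is a single bilinear form, hence $T_H$-invariant as a bilinear form (and not merely at one tangent vector, as in the Finslerian case). Consequently the $T_H$-weight decomposition $\mathfrak{m}=\bigoplus\hat{\mathfrak{m}}_{\pm\gamma'}$ of Lemma \ref{lemma-3-7}(1) is automatically orthogonal across all distinct weights, with no parity restriction on $\gamma'$ of the type imposed in Lemma \ref{lemma-3-6}. I would then show $U(u,v)=0$ by checking $\langle[w,u]_\mathfrak{m},v\rangle+\langle[w,v]_\mathfrak{m},u\rangle=0$ weight-piece by weight-piece. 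For $w$ in a root plane $\mathfrak{g}_{\pm\tau}$, the bracket $[w,u]$ lies in $\mathfrak{g}_{\pm(\tau+\alpha)}+\mathfrak{g}_{\pm(\tau-\alpha)}$, and its $\mathfrak{m}$-component pairs non-trivially with $v\in\mathfrak{g}_{\pm\beta}$ only if $\tau\pm\alpha=\pm\beta$, excluded by (2); the term $\langle[w,v]_\mathfrak{m},u\rangle$ is handled symmetrically using (3). For $w\in\mathfrak{t}\cap\mathfrak{m}$, the bracket $[w,u]\in\mathfrak{g}_{\pm\alpha}$ pairs with $v\in\mathfrak{g}_{\pm\beta}$ trivially because the two lie in distinct $T_H$-weight spaces.

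Finally, to invoke Theorem \ref{flag-curvature-formula-thm} I must also verify $\langle[u,\mathfrak{m}]_\mathfrak{m},u\rangle=0$. Since $\alpha'\neq 0$ (otherwise $\alpha\in\mathfrak{t}\cap\mathfrak{m}$, which is the edge case discussed below), $T_H$ acts on the plane $\mathfrak{g}_{\pm\alpha}$ by non-trivial rotations and $\langle\cdot,\cdot\rangle|_{\mathfrak{g}_{\pm\alpha}}$ is therefore a positive scalar multiple of the bi-invariant inner product; this takes care of the contributions coming from $w\in\mathfrak{t}\cap\mathfrak{m}$, while all other weight pieces of $w$ contribute zero by $T_H$-orthogonality, the only surviving possibility being that $2\alpha$ is a root, which never happens for a compact simple Lie algebra. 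Thus $K(u,v)=0$, contradicting positive flag curvature.

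The main obstacle is the residual edge case in which the root $\alpha$ (or $\beta$) happens to lie in $\mathfrak{t}\cap\mathfrak{m}$, so that $\alpha'=0$ and the $T_H$-weight $\pm\alpha'$ is absorbed into $\hat{\mathfrak{g}}_0$; here $\langle\cdot,\cdot\rangle|_{\mathfrak{g}_{\pm\alpha}}$ is no longer constrained to be bi-invariant. I expect to resolve this as in the proof of Lemma \ref{key-lemma-1} by invoking Lemma \ref{lemma-3-7}(2) to replace $\mathfrak{t}$ with a better fundamental Cartan subalgebra along which the $F$-unit vector $u\in\hat{\mathfrak{g}}_0$ of maximal bi-invariant length is orthogonal to the remaining root plane inside $\hat{\mathfrak{g}}_0$, restoring the orthogonality $\langle\mathfrak{t}\cap\mathfrak{m},\mathfrak{g}_{\pm\alpha}\rangle=0$ needed to close the argument.
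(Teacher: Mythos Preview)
The paper does not supply a proof for this lemma; it only states it as the Riemannian strengthening of Lemma~\ref{key-lemma-2}. Your plan---rerunning the proof of Lemma~\ref{key-lemma-2} while replacing the delicate $g_u^F$-orthogonality lemmas by the plain $\mathrm{Ad}(T_H)$-invariance of the single Riemannian inner product---is exactly the intended route, and your weight-by-weight verification that $U(u,v)=0$ and $\langle[u,\mathfrak m]_\mathfrak m,u\rangle=0$ is correct.

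One remark: the ``edge case'' $\alpha'=0$ you worry about in the final paragraph cannot occur. If $\alpha\in\mathfrak t\cap\mathfrak m$ then the affine line $\alpha+\mathfrak t\cap\mathfrak m$ coincides with the subspace $\mathfrak t\cap\mathfrak m$ and therefore contains $-\alpha$ as well, contradicting hypothesis~(3). The same observation for $\beta$ uses~(4). So $\alpha',\beta'\neq 0$ automatically, and your main argument already closes the proof without any appeal to Lemma~\ref{lemma-3-7}(2). (Incidentally, the fix you sketch via Lemma~\ref{lemma-3-7}(2) would not apply verbatim anyway: that lemma produces a special $u\in\mathfrak t\cap\mathfrak m$, whereas here you need $u\in\mathfrak g_{\pm\alpha}$.)
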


Assuming that there exist roots $\alpha$ and $\beta$ of $\mathfrak{g}$ such that
$\alpha\neq\pm\beta$ and $\alpha-\beta\in\mathfrak{t}\cap\mathfrak{m}$, we can
use the $\mathrm{Ad}(H)$-invariance of the inner product $\langle\cdot,\cdot\rangle$ and those strengthened key lemmas, to discuss each
possible case. Notice that in the situation of $\mathrm{Sp}(2)/\mathrm{U}(1)$ in \cite{XW2015}, it is positively curved for all commutative pairs, so B. Wilking's method is essential here. The case by case discussion is not hard, but too long to be presented here. See \cite{BB76} for the original proof (with a correction in \cite{XW2015}), or
the recent paper \cite{WZ2015} for a much shorter proof.

\section{Appendix: the root systems of compact simple Lie algebras}
\label{appendix-section}

For each simple Lie algebra $\mathfrak{g}$,  we present here the Bourbaki
description of the root system $\Delta_\mathfrak{g}$, and the root planes in the classical cases.

\smallskip
(1)\quad  The case $\mathfrak{g}=A_n=\mathfrak{su}(n+1)$ for $n>0$.

Let $\{e_1,\ldots,e_{n+1}\}$ denote the standard orthonormal basis of
$\mathbb{R}^{n+1}$.  Then $\mathfrak{t}$ can be isometrically identified with
the subspace $(e_1+\cdots+c_{n+1})^\perp \subset \mathbb{R}^{n+1}$. The
root system $\Delta$ is
\begin{equation*}
\{\pm(e_i-e_j) \mid 1\leqq i < j\leqq n+1\}.
\end{equation*}
Let $E_{i,j}$ be the matrix with $1$ in the $(i,j)$-entry and all other entries zero.  Then
\begin{eqnarray*}
e_i&=&\sqrt{-1}E_{i,i}\in\mathfrak{su}(n+1), \mbox{ and} \\
\mathfrak{g}_{\pm(e_i-e_j)}&=&\mathbb{R}(E_{i,j}-E_{j,i})+\mathbb{R}\sqrt{-1}(E_{i,j}+E_{j,i}).
\end{eqnarray*}

\smallskip
(2) The case $\mathfrak{g}=B_n=\mathfrak{so}(2n+1)$ for $n>1$.

The Cartan subalgebra $\mathfrak{t}$ can be isometrically identified with
$\mathbb{R}^n$ with the standard orthonormal basis $\{e_1,\ldots,e_n\}$.
The root system $\Delta$ is
\begin{equation*}
\{\pm e_i \mid 1\leqq i\leqq n\} \cup
	\{\pm e_i\pm e_j \mid 1\leqq i<j\leqq n\}.
\end{equation*}
In terms of  matrices, we have
\begin{eqnarray*}
e_i&=&E_{2i,2i+1}-E_{2i+1,2i}, \\
\mathfrak{g}_{\pm e_i}&=&\mathbb{R}(E_{2i,1}-E_{1,2i})+\mathbb{R}(E_{2i+1,1}-E_{1,2i+1}),\\
\mathfrak{g}_{\pm(e_i-e_j)}&=&\mathbb{R}(E_{2i,2j}+E_{2i+1,2j+1}
-E_{2j,2i}-E_{2j+1,2i+1})\\
&&\phantom{X}+\mathbb{R}(E_{2i,2j+1}-E_{2i+1,2j}+E_{2j,2i+1}-E_{2j+1,2i}),
\mbox{ and}\\
\mathfrak{g}_{\pm(e_i+e_j)}&=&
\mathbb{R}(E_{2i,2j}-E_{2i+1,2j+1}-E_{2j,2i}+E_{2j+1,2i+1})\\
&&\phantom{X}+\mathbb{R}(E_{2i,2j+1}+E_{2i+1,2j}-E_{2j,2i+1}-E_{2j+1,2i}).
\end{eqnarray*}

\smallskip
(3) The case $\mathfrak{g}=C_n=\mathfrak{sp}(n)$ for $n>2$.

As before, $\mathfrak{t}$ is isometrically identified with
$\mathbb{R}^n$ with the standard orthonormal basis $\{e_1,\ldots,e_n\}$.
The root system $\Delta$ is
\begin{equation*}
\{\pm 2e_i \mid 1\leqq i\leqq n\} \cup
	\{\pm e_i\pm e_j \mid 1\leqq i<j\leqq n\}.
\end{equation*}
In terms of  matrices, we have
\begin{eqnarray*}
e_i &=&\mathbf{i}E_{i,i},\\
\mathfrak{g}_{\pm 2e_i}&=&\mathbb{R}\mathbf{j}E_{i,i}+\mathbb{R}\mathbf{k}E_{i,i},\\
\mathfrak{g}_{\pm(e_i-e_j)}&=&\mathbb{R}(E_{i,j}-E_{j,i})+\mathbb{R}\mathbf{i}(E_{i,j}+E_{j,i}),\mbox{ and}\\
\mathfrak{g}_{\pm(e_i+e_j)}&=&\mathbb{R}\mathbf{j}(E_{i,j}+E_{j,i})+
\mathbb{R}\mathbf{k}(E_{i,j}+E_{j,i}).
\end{eqnarray*}

(4) The case $\mathfrak{g}=D_n=\mathfrak{so}(2n)$ for $n>3$.

 The
Cartan subalgebra $\mathfrak{t}$ is identified with
$\mathbb{R}^n$ with the standard orthonormal basis $\{e_1,\ldots,e_n\}$.
The root system $\Delta$ is
\begin{equation*}
\{\pm e_i\pm e_j \mid 1\leqq i<j\leqq n\}.
\end{equation*}
In matrices, we have formulas for the $e_i$ and for the root planes for
$e_i\pm e_j$ similar to those in the case of $B_n$, i.e.
\begin{eqnarray*}
e_i&=&E_{2i-1,2i}-E_{2i,2i-1}, \\
\mathfrak{g}_{\pm(e_i-e_j)}&=&\mathbb{R}(E_{2i-1,2j-1}+E_{2i,2j}
-E_{2j-1,2i-1}-E_{2j,2i})\\
&&\phantom{X}+\mathbb{R}(E_{2i-1,2j}-E_{2i,2j-1}+E_{2j-1,2i}-E_{2j,2i-1}),
\mbox{ and}\\
\mathfrak{g}_{\pm(e_i+e_j)}&=&
\mathbb{R}(E_{2i-1,2j-1}-E_{2i,2j}-E_{2j-1,2i-1}+E_{2j,2i})\\
&&\phantom{X}+\mathbb{R}(E_{2i-1,2j}+E_{2i,2j-1}-E_{2j-1,2i}-E_{2j,2i-1}).
\end{eqnarray*}

\smallskip
(5) The case $\mathfrak{g}=E_6$.

The Cartan subalgebra $\mathfrak{t}$ can be isometrically identified with
$\mathbb{R}^6$ with the standard orthonormal basis $\{e_1,\ldots,e_6\}$.
The root system is
\begin{equation*}
\{\pm e_i\pm e_j \mid 1\leqq i<j\leqq 5\}
\cup \{\pm\frac12 e_1\pm\cdots\pm\frac12 e_5\pm\frac{\sqrt{3}}{2}e_6
	\mbox{ with odd number of +'s}\}.
\end{equation*}
It contains a root system of type
$D_5$.

\smallskip
(6) The case $\mathfrak{g}=E_7$.

The Cartan subalgebra
can be isometrically identified with
$\mathbb{R}^7$ with the standard orthonormal basis $\{e_1,\ldots,e_7\}$.
The root system is
\begin{eqnarray*}
& &\{\pm e_i\pm e_j \mid 1\leqq i<j<7\} \cup \{\pm\sqrt{2}e_7;
\frac12(\pm e_1\pm\cdots\pm e_6\pm\sqrt{2}e_7)\nonumber\\
& &\mbox{  with an odd number of plus signs among the first six coefficients}\}.
\end{eqnarray*}
It contains a root system of $D_6$.

\smallskip
(7) The case $\mathfrak{g}=E_8$.

The Cartan subalgebra
can be isometrically identified with
$\mathbb{R}^8$ with the standard orthonormal basis $\{e_1,\ldots,e_8\}$.
The root system $\Delta$ is
\begin{eqnarray*}
&&\{\pm e_i\pm e_j \mid 1\leqq i<j\leqq 8\} \cup\nonumber\\
&&\{\frac12(\pm e_1\pm\cdots\pm e_8) \mbox{ with an even number of +'s}\}.
\end{eqnarray*}
It contains a root system of $D_8$.

\smallskip
(8) The case $\mathfrak{g}=F_4$.

The Cartan subalgebra is
isometrically identified with $\mathbb{R}^4$ with the standard orthonormal
basis $\{e_1,\ldots,e_4\}$. The root system is
\begin{eqnarray*}
\{\pm e_i \mid 1\leqq i\leqq 4\} \cup \{\pm e_i\pm e_j \mid 1\leqq i<j\leqq 4\}
	\cup \{\frac12(\pm e_1\pm\cdots\pm e_4)\}.
\end{eqnarray*}
It contains the root system of $B_4$.

\smallskip
(9) The case $\mathfrak{g}=G_2$.

The Cartan subalgebra
is isometrically identified with $\mathbb{R}^2$ with the standard
orthonormal basis $\{e_1,e_2\}$. The root system $\Delta$ is
\begin{eqnarray*}
\{(\pm\sqrt{3},0),(\pm\frac{\sqrt{3}}{2},\pm\frac{3}{2}),
(0,\pm 1),(\pm \frac{\sqrt{3}}{2},\pm\frac{1}{2})\}.
\end{eqnarray*}

\end{document}